\documentclass[11pt]{article}

\usepackage[margin=1in]{geometry}
\usepackage[T1]{fontenc}
\usepackage{lmodern}
\usepackage{microtype}
\usepackage{amsmath,amssymb,amsthm,mathtools,bm}
\usepackage{mathrsfs}
\usepackage{booktabs,array}
\usepackage{graphicx}
\usepackage{subcaption}
\usepackage{placeins}
\usepackage{xcolor}
\usepackage{enumitem}
\usepackage[colorlinks=true,linkcolor=blue!45!black,
            citecolor=blue!45!black,urlcolor=blue!45!black]{hyperref}
\usepackage[nameinlink,noabbrev]{cleveref}
\usepackage{tikz}

\allowdisplaybreaks
\numberwithin{equation}{section}
\setlist[itemize]{topsep=0.35em,itemsep=0.2em}
\setlist[enumerate]{topsep=0.35em,itemsep=0.2em}

\newtheorem{theorem}{Theorem}[section]
\newtheorem{lemma}[theorem]{Lemma}
\newtheorem{proposition}[theorem]{Proposition}
\newtheorem{corollary}[theorem]{Corollary}
\theoremstyle{remark}
\newtheorem{remark}[theorem]{Remark}
\newtheorem{example}[theorem]{Example}
\newtheorem{definition}[theorem]{Definition}

\newcommand{\T}{\mathbb T}

\newcommand{\cS}{\mathcal S}

\newcommand{\cF}{\mathcal F}

\newcommand{\ip}[2]{\left\langle #1,#2\right\rangle}
\newcommand{\norm}[1]{\left\lVert #1\right\rVert}
\newcommand{\abs}[1]{\left\lvert #1\right\rvert}

\title{\textbf{The sharp CFL condition of the piecewise constant sparse grid discontinuous Galerkin method for high-dimensional transport equations}}
\author{Juntao Huang\thanks{Department of Mathematical Sciences, University of Delaware, Newark, DE 19716, USA. Email: \href{mailto:huangjt@udel.edu}{huangjt@udel.edu}}}
\date{\today}

\begin{document}

\maketitle
\vspace{-2.2em}

\begin{abstract}
    We establish the sharp CFL condition for the piecewise constant sparse grid discontinuous Galerkin (DG) method with forward Euler time stepping, applied to transport equations with constant coefficients on periodic domains in arbitrary dimensions. For the transport velocity $\boldsymbol c=(c_1,\ldots,c_d)$ and a uniform mesh of size $h$, we prove that the scheme is $L^2$ stable if and only if $\Delta t \leq {h}/{\max_{1\leq \ell\leq d}|c_\ell|}$, whereas the corresponding full grid upwind scheme is well-known to require $\Delta t\leq h/\sum_{\ell=1}^d |c_\ell|$. The sparse grid discretization therefore enlarges the admissible time step by a factor of ${(\sum_{\ell=1}^d |c_\ell|)}/{(\max_{1\leq \ell\leq d}|c_\ell|)}$, which lies between $1$ and $d$ and reaches $d$ for isotropic transport. The proof of sufficiency relies on projection leakage identities for the multilevel Haar decomposition, which allow the mixed directional terms in the energy estimate to be absorbed by the energy discarded by the sparse grid projection. The proof of sharpness follows from alternating modes in one dimension at the finest level. As a by-product, we obtain explicit formulas for the $L^2$ operator norm and the spectral radius of the amplification operator. For spaces over general downward closed index sets, we derive an explicit sufficient CFL condition and a geometric criterion for its sharpness. Numerical experiments in two and four dimensions confirm the theoretical results.
\end{abstract}

\paragraph{Keywords.}
Sparse grid; discontinuous Galerkin; CFL condition; Haar wavelet; transport equations; stability.

\section{Introduction}
\label{sec:introduction}

In this paper, we study the CFL condition for the piecewise constant sparse grid discontinuous Galerkin (DG) method, applied to the transport equation and periodic boundary conditions in arbitrary dimensions $d\ge 1$:
\begin{equation}\label{eq:intro-transport}
    u_t+\sum_{\ell=1}^d c_\ell u_{x_\ell}=0.
\end{equation}

The DG method was proposed by Reed and Hill \cite{reed1973triangularmesh} in the framework of neutron transport. It was then developed for conservation laws by Cockburn et al. in a series of papers \cite{cockburn1991runge1,cockburn1989tvb2,cockburn1989tvb3,cockburn1990runge4,cockburn1998runge5}, which use piecewise polynomials in space and explicit total variation diminishing (TVD) Runge-Kutta (RK) discretization in time \cite{shu1988efficient,gottlieb2001strong}. The method offers several advantages, including flexibility in handling complex geometries, $hp$-adaptivity, natural preservation of local conservation, and high efficiency for parallelization. For the detailed description of this method, we refer readers to the lecture notes \cite{cockburn2006introduction} and the review paper \cite{cockburn2001runge}.

In high dimensions, however, discretizations based on tensor product grids suffer from the curse of dimensionality: with mesh size $h$ in each direction, the number of degrees of freedom grows as $O(h^{-d})$. 
Sparse grid methods alleviate this growth by retaining only a selected subset of hierarchical tensor product components \cite{griebel1990combination,bungartz2004sparse,lastdrager2001sparse,schwab2008sparse}, which reduces the number of degrees of freedom to $O(h^{-1}\abs{\log_2 h}^{d-1})$. 
Sparse grid DG method was developed for high-dimensional elliptic equations \cite{wang2016sparse} and transport equations \cite{guo2016sparse,guo2017adaptive}, and has been extended to nonlinear conservation laws \cite{huang2020adaptive}, wave equations \cite{huang2020adaptiveIPDG}, nonlinear dispersive equations \cite{tao2022adaptive,huang2022class}, the Vlasov-Maxwell system \cite{tao2019vlasovmaxwell}, and Hamilton-Jacobi equations \cite{guo2021adaptive}. See \cite{huang2024adaptive} for a recent review on the sparse grid DG method.

For explicit time integrations, the computational cost also depends on the admissible time step. On a uniform full grid, the piecewise constant upwind DG method is equivalent to the classical first-order upwind scheme \cite{courant1928partiellen,gustafsson1995time}. With forward Euler time stepping, it is $L^2$-contractive if and only if
\begin{equation}\label{eq:intro-full-grid-cfl}
    \Delta t\leq\frac{h}{\sum_{\ell=1}^d\abs{c_\ell}},
\end{equation}
where $h$ is the mesh size in each direction. The fully discrete $L^2$ stability of higher-order Runge-Kutta DG methods on full grids has also been studied extensively in \cite{zhang2010stability,sun2017stability,sun2019strong,xu20192}. 
Under the sparse grid discretization, however, the situation is more subtle: the sparse grid space is constructed from wavelets at multiple hierarchical levels and the sharp CFL condition can no longer be derived from these full grid analyses. To our knowledge, no theoretical analysis of the stability or the CFL condition for a fully discrete sparse grid DG scheme is available in the literature.

Numerical investigations have shown that the sparse grid method can lead to a larger time step \cite{tao2019sparse}. For two-dimensional isotropic transport (i.e. $u_t + u_x + u_y = 0$), their eigenvalue calculations indicated that the CFL number of the sparse grid DG schemes were approximately twice those of the corresponding full grid schemes. These  numerical observations motivate a rigorous analysis of the CFL condition and its dependence on the dimension and transport velocity.

The purpose of this paper is to address this question. Our contributions are threefold.

First, we prove that the piecewise constant sparse grid DG method with forward Euler time stepping is $L^2$ stable if and only if
\begin{equation}\label{eq:intro-sparse-grid-cfl}
    \Delta t\leq\frac{h}{\max_{1\leq\ell\leq d}\abs{c_\ell}}.
\end{equation}
Thus, the CFL condition is governed by the largest speed, rather than by the sum of the speeds in \eqref{eq:intro-full-grid-cfl}. The admissible time step is enlarged by a factor of ${(\sum_{\ell=1}^d\abs{c_\ell})}/{(\max_{1\leq\ell\leq d}\abs{c_\ell})}$, which lies between $1$ and $d$ and equals $d$ for isotropic transport. The analysis further yields exact formulas for the $L^2$ norm and the spectral radius of the amplification operator.

Second, we extend the analysis to hierarchical spaces built on arbitrary downward closed index sets, which include anisotropic sparse grid methods \cite{bungartz2004sparse,griebel2000optimized} and adaptive sparse grid methods \cite{guo2017adaptive}. We identify an explicit combinatorial quantity of the index set $\Lambda$, which yields a sufficient CFL condition, together with a geometric criterion on $\Lambda$ under which this condition is sharp. Both the full grid space and the sparse grid space satisfy the criterion, so the classical condition \eqref{eq:intro-full-grid-cfl} and the new condition \eqref{eq:intro-sparse-grid-cfl} arise as the two extreme cases of a single formula. An L-shaped index set further shows that the condition can be strictly sufficient in general.

Third, the proofs are based on projection leakage identities for the multilevel Haar decomposition, which quantify exactly how a difference operator in a fine grid splits into the parts retained and discarded by a coarse grid projection. With these identities, we show that the energy discarded by the hierarchical projection dominates the mixed directional terms in the energy estimate. The exactness of these identities is what makes the sharp characterization of the CFL condition possible. We expect that these tools will be useful in the fully discrete stability analysis of other multilevel and multiresolution discretizations.

The rest of the paper is organized as follows. We begin with a brief introduction to the Haar wavelets and the piecewise constant sparse grid DG scheme in Section~\ref{sec:setting}. Section~\ref{sec:proof2d} develops the projection leakage identities and proves the sharp CFL condition in two dimensions. Section~\ref{sec:proof-arbitrary-d} then extends the argument to arbitrary spatial dimensions. Section~\ref{sec:downward-closed} generalizes the analysis to arbitrary downward closed index sets. Numerical experiments in Section~\ref{sec:numerics} validate the theoretical results. Section~\ref{sec:conclusion} concludes the paper.

\section{Preliminaries}
\label{sec:setting}

In this section, we review the Haar wavelets \cite{mallat1999wavelet} and the piecewise constant sparse grid DG method \cite{wang2016sparse,guo2016sparse} for solving the transport equation. Throughout the paper, all function spaces and inner products are real. For a subspace $X$, we use $P_X$ to denote the $L^2$ orthogonal projection onto $X$.

\subsection{Haar wavelets}

Let $\T=[0,1)$ be the one-dimensional periodic domain. For a mesh level $n\geq0$, let $\Omega_n := \{I_{n,j}\}_{j=0}^{2^n-1}$ be the uniform mesh consisting of $2^n$ cells, where
\[    
    I_{n,j}
    =
    [j h_n,(j+1)h_n),
    \qquad
    0\leq j<2^n,
\]
with $h_n=2^{-n}$. Define the piecewise constant polynomial space on level $n$ by
\begin{equation*}
    V_n
    :=
    \left\{
        v\in L^2(\T):
        v\big|_{I_{n,j}}\in\mathbb P^0(I_{n,j})
        \text{ for }0\leq j<2^n
    \right\},
\end{equation*}
where $\mathbb{P}^0(I_{n,j})$ denotes the space of piecewise constant functions on $I_{n,j}$.
The spaces are nested:
\[
    V_0\subset V_1\subset\cdots\subset V_n\subset\cdots.
\]
For $n\geq1$, let $W_n$ be the $L^2$ orthogonal complement of
$V_{n-1}$ in $V_n$:
\begin{equation}\label{eq:W-level}
    V_n=V_{n-1}\oplus W_n,
    \qquad
    W_n\perp V_{n-1}.
\end{equation}
For notational convenience, set $W_0:=V_0$. Then
\begin{equation}\label{eq:haar-decomposition}
    V_n
    =
    \bigoplus_{p=0}^{n}W_p.
\end{equation}
For $n\geq1$, $W_n$ is spanned by the Haar wavelets \cite{mallat1999wavelet}.

On $\T^2$, let $V_n^x,W_n^x$ and $V_n^y,W_n^y$ denote
the corresponding spaces in the $x$- and $y$-variables. The standard
full grid space at level $N$ is
\begin{equation}\label{eq:full-space}
    \cF_N := V_N^x\otimes V_N^y = \bigoplus_{0\leq p,q\leq N} W_p^x\otimes W_q^y.
\end{equation}
The sparse grid space is \cite{wang2016sparse}
\begin{equation}\label{eq:sparse-space}
    \cS_N := \bigoplus_{0\leq p+q\leq N} W_p^x\otimes W_q^y.
\end{equation}

\subsection{Piecewise constant sparse grid DG scheme}

For $v\in V_n$, let $v_j$ denote its value on $I_{n,j}$ for $0\le j < 2^n$. Define the one dimensional periodic backward difference operator $D_n:V_n\to V_n$ by
\begin{equation}\label{eq:D-level-definition}
    (D_n v)_j = v_j-v_{j-1},
\end{equation}
where $j-1$ is understood modulo $2^n$.

On $\cF_N$, define the full grid difference operators
$A_x:\cF_N\to\cF_N$ and $A_y:\cF_N\to\cF_N$ by
\begin{equation}\label{eq:Ax-Ay}
    A_x
    :=
    D_N^x\otimes I_y,
    \qquad
    A_y
    :=
    I_x\otimes D_N^y.
\end{equation}
Here $D_N^x$ and $D_N^y$ are the one-dimensional difference operators \eqref{eq:D-level-definition} in the $x$- and $y$-directions, respectively, while $I_x$
and $I_y$ are the corresponding identity operators.

Consider the transport equation with constant coefficient and periodic boundary conditions:
\begin{equation}\label{eq:transport}
    u_t+c_1u_x+c_2u_y=0,
\end{equation}
with $\boldsymbol c=(c_1,c_2)\in\mathbb{R}^2$. Throughout the analysis, we assume $c_1, c_2\geq0$ without loss of generality; the negative velocity component can be handled in the similar way. Let $h:=2^{-N}$ and let $\Delta t\geq0$ be the time step size. The full grid piecewise constant upwind DG forward Euler scheme is
\begin{equation}\label{eq:full-grid-update}
    u^{n+1}
    =
    u^n
    -\frac{c_1\Delta t}{h}A_xu^n
    -\frac{c_2\Delta t}{h}A_yu^n.
\end{equation}
Testing the full grid DG bilinear form only against $\cS_N$ is equivalent to applying the orthogonal projection onto the sparse grid space \cite{guo2016sparse}. Therefore, the corresponding sparse grid scheme is
\begin{equation}\label{eq:sparse-update}
    u^{n+1} = u^n - \frac{c_1\Delta t}{h}P_{\cS_N}A_xu^n - \frac{c_2\Delta t}{h}P_{\cS_N}A_yu^n.
\end{equation}
where $P_{\cS_N}$ is the $L^2$ projection from $\cF_N$ to $\cS_N$.

\section{Stability analysis of the sparse grid scheme in two dimensions}
\label{sec:proof2d}

It is well-known that the sharp CFL condition for the full grid scheme \eqref{eq:full-grid-update} is
\[
    \Delta t
    \leq
    \frac{h}{c_1+c_2}.
\]
In this section, we will prove that the sharp CFL condition for the
sparse grid scheme \eqref{eq:sparse-update} is
\begin{equation}\label{eq:observed-square}
    \Delta t
    \leq
    \frac{h}{\max(c_1,c_2)}.
\end{equation}

\subsection{One-dimensional projection leakage identities}\label{subsec:1d-compression}

We first establish several identities for the periodic difference operator and the coarse grid projection.
\begin{lemma}\label{lem:D-energy}
For any $v\in V_n$,
\begin{equation}\label{eq:D-energy}
    2\ip{v}{D_n v}
    =
    \norm{D_n v}^2.
\end{equation}
\end{lemma}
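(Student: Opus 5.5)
The plan is a direct computation in the cell values. It uses only two facts: the $L^2$ inner product on $V_n$ is $h_n$ times the discrete $\ell^2$ inner product of the cell values, and the periodic shift is an isometry. Write $(Sv)_j := v_{j-1}$ with indices taken modulo $2^n$, so that $D_n = I - S$. Because $S$ only permutes the cell values cyclically, $\norm{Sv} = \norm{v}$.

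The first step is to expand the right-hand side:
\[
    \norm{D_n v}^2 = \norm{v}^2 - 2\ip{v}{Sv} + \norm{Sv}^2 = 2\norm{v}^2 - 2\ip{v}{Sv}.
\]
The second step is to expand the left-hand side:
\[
    2\ip{v}{D_n v} = 2\norm{v}^2 - 2\ip{v}{Sv}.
\]
The two expressions coincide, which proves \eqref{eq:D-energy}.

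Equivalently, in cell values the identity reads
\[
    h_n\sum_j \bigl(2v_j^2 - 2v_jv_{j-1}\bigr) = h_n\sum_j (v_j - v_{j-1})^2,
\]
which is the same as the reindexing $\sum_j v_{j-1}^2 = \sum_j v_j^2$.

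No real obstacle is expected here. The only point needing care is that periodicity is what makes $S$ norm-preserving. On a non-periodic mesh, boundary terms would appear and the identity would hold only up to those terms.
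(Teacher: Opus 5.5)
Your proof is correct and is essentially the paper's argument. The paper expands $\norm{D_n v}^2 = h\sum_j (v_j - v_{j-1})^2$ and uses the periodic reindexing $\sum_j v_{j-1}^2 = \sum_j v_j^2$; this is exactly your isometry $\norm{Sv} = \norm{v}$, written in cell values.
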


\begin{proof}
Let $v_j$ be the value of $v$ on $I_{n,j}$. With periodic indexing,
\begin{equation*}
    \norm{D_n v}^2
    =
    h\sum_j(v_j-v_{j-1})^2
    =
    h\sum_j
    \left(v_j^2+v_{j-1}^2-2v_jv_{j-1}\right)
    =
    2h\sum_jv_j^2-2h\sum_jv_jv_{j-1}
    =
    2\ip{v}{D_n v}.
\end{equation*}
\end{proof}

\begin{lemma}[One-level coarse grid projection identity]
\label{lem:single-level}
For $n\geq1$ and any $v\in V_{n-1}$,
\begin{equation}\label{eq:single-level-compression}
    P_{V_{n-1}}D_n v
    =
    \frac12D_{n-1}v,
\end{equation}
where $v$ on the left-hand side is viewed as a function in $V_n$. 
\end{lemma}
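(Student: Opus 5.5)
The plan is to verify the identity by direct computation in cell values, since every operator involved acts locally. A function $v\in V_{n-1}$ is constant on each coarse cell $I_{n-1,k}=I_{n,2k}\cup I_{n,2k+1}$. Viewed in $V_n$, its fine values are therefore $v_{2k}=v_{2k+1}=w_k$, where $w_k$ denotes the value of $v$ on $I_{n-1,k}$.

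First, compute $D_n v$ on the two fine children of each coarse cell. Using \eqref{eq:D-level-definition} with periodic indexing:
\begin{itemize}
\item on $I_{n,2k}$ we get $(D_nv)_{2k}=v_{2k}-v_{2k-1}=w_k-w_{k-1}$, because the fine cell $2k-1$ is the right child of coarse cell $k-1$;
\item on $I_{n,2k+1}$ we get $(D_nv)_{2k+1}=v_{2k+1}-v_{2k}=w_k-w_k=0$.
\end{itemize}
Periodicity is consistent here, because $2^n$ is even and fine index $-1$ corresponds to $2^n-1$, the right child of coarse cell $2^{n-1}-1$.

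Next, recall that $P_{V_{n-1}}$ is the $L^2$ projection onto piecewise constants on $\Omega_{n-1}$. It therefore acts by averaging over each coarse cell, and the two fine children have equal length $h_n$. So on $I_{n-1,k}$,
\[
(P_{V_{n-1}}D_nv)\big|_{I_{n-1,k}}=\tfrac12\big((w_k-w_{k-1})+0\big)=\tfrac12 (D_{n-1}v)_k,
\]
which is exactly \eqref{eq:single-level-compression}. I should briefly justify that the orthogonal projection is the cellwise average. This follows because the indicator functions of the coarse cells form an orthogonal basis of $V_{n-1}$.

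There is no real obstacle in this argument. The only points needing care are the bookkeeping of the periodic wraparound index and the correct identification of which fine cells lie in which coarse cell.
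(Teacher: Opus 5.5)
Your proposal is correct and follows essentially the same route as the paper's proof. Both write the embedded fine values as $v_{2k}=v_{2k+1}=w_k$, observe that $D_nv$ equals $w_k-w_{k-1}$ on the left child and $0$ on the right child, and use that $P_{V_{n-1}}$ averages over the two children. You additionally justify explicitly the cellwise-averaging form of the projection and the periodic wraparound, both of which the paper uses implicitly.
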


\begin{proof}
Let $v_j$ be the value of $v$ on the coarse cell $I_{n-1,j}$, and let $\widetilde v\in V_n$ denote its embedding into the fine space $V_n$. Then
\[
    \widetilde v_{2j}
    =
    \widetilde v_{2j+1}
    =
    v_j.
\]
Consequently,
\[
    (D_n\widetilde v)_{2j}
    =
    \widetilde v_{2j}-\widetilde v_{2j-1}
    =
    v_j-v_{j-1},
    \qquad
    (D_n\widetilde v)_{2j+1}
    =
    \widetilde v_{2j+1}-\widetilde v_{2j}
    =
    0.
\]
Projection onto $V_{n-1}$ averages the values on the two children
of each coarse cell. Therefore,
\begin{align*}
    \left(P_{V_{n-1}}D_n\widetilde v\right)_j
    =
    \frac12
    \left(
        (D_n\widetilde v)_{2j}
        +(D_n\widetilde v)_{2j+1}
    \right)
    =
    \frac12(v_j-v_{j-1})
    =
    \frac12(D_{n-1}v)_j.
\end{align*}
\end{proof}

\begin{lemma}[Multi-level coarse grid projection identity]
\label{lem:multilevel}
Let $0\leq m\leq N$. For any $v\in V_m$,
\begin{equation}\label{eq:multilevel-compression}
    P_{V_m}D_Nv
    =
    2^{m-N}D_mv.
\end{equation}
\end{lemma}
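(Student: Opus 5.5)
The plan is to prove \eqref{eq:multilevel-compression} by downward induction on $m$, from $m=N$ to $m=0$, iterating the one-level identity of Lemma~\ref{lem:single-level}. The two ingredients are the nesting $V_m\subset V_{m+1}$ and the tower property of orthogonal projections onto nested subspaces, $P_{V_m}=P_{V_m}P_{V_{m+1}}$.

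The base case $m=N$ is immediate. For $v\in V_N$ we have $P_{V_N}D_Nv=D_Nv=2^{0}D_Nv$, since $D_N$ maps $V_N$ into itself.

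For the inductive step, assume the identity holds at level $m+1\le N$, and let $v\in V_m$. Because $V_m\subset V_{m+1}$, we may view $v$ as an element of $V_{m+1}$, and the induction hypothesis gives
\[
P_{V_{m+1}}D_Nv=2^{m+1-N}D_{m+1}v .
\]
Since $V_m\subset V_{m+1}$, we have $P_{V_m}=P_{V_m}P_{V_{m+1}}$. Combining this with the display and then applying Lemma~\ref{lem:single-level} with $n=m+1$ (which is allowed because $v\in V_m$) yields
\[
P_{V_m}D_Nv=2^{m+1-N}P_{V_m}D_{m+1}v=2^{m+1-N}\cdot\tfrac12 D_mv=2^{m-N}D_mv ,
\]
which is the identity at level $m$.

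The argument is routine, and no step is a real obstacle. The only point needing care is bookkeeping about embeddings: the same function $v$ is regarded in $V_m$, in $V_{m+1}$, and in $V_N$, and we use that $D_N$ acts on the $V_N$-embedding. Lemma~\ref{lem:single-level} is phrased exactly with this embedding convention, so the induction chains correctly. An alternative direct proof is also available. Projecting onto $V_m$ averages over the $2^{N-m}$ fine children of each coarse cell, and on the embedded $v$ the fine differences $D_Nv$ are nonzero only at the first child, where they equal $v_j-v_{j-1}$. Averaging then gives $2^{m-N}(v_j-v_{j-1})$ directly. I would use the inductive version since it reuses Lemma~\ref{lem:single-level}.
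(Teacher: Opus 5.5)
Your proof is correct and is essentially the paper's argument: both combine the tower property of nested orthogonal projections with the one-level identity of Lemma~\ref{lem:single-level}. The only difference is bookkeeping. The paper fixes $m$ and peels off the levels $n=N,N-1,\ldots,m+1$ via $P_{V_m}D_nv=P_{V_m}P_{V_{n-1}}D_nv=\tfrac12P_{V_m}D_{n-1}v$, whereas you organize the same telescoping as a downward induction on $m$; your alternative direct averaging over the $2^{N-m}$ fine children is also valid.
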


\begin{proof}
For $n=m+1,\ldots,N$, nestedness gives
\[
    V_m\subset V_{n-1},
    \qquad
    P_{V_m}P_{V_{n-1}}
    =
    P_{V_m}.
\]
Applying Lemma~\ref{lem:single-level},
\begin{align*}
    P_{V_m}D_n v
    =
    P_{V_m}P_{V_{n-1}}D_n v
    =
    \frac12P_{V_m}D_{n-1}v.
\end{align*}
Repeating this identity for $n=N,N-1,\ldots,m+1$ yields
\[
    P_{V_m}D_Nv
    =
    2^{m-N}P_{V_m}D_mv.
\]
Since $D_mv\in V_m$, we have $P_{V_m}D_mv=D_mv$.
\end{proof}

We are now ready to prove the main result of this subsection. It quantifies how much energy is discarded by the coarse grid projection, when the fine grid difference operator is applied to a function in the coarse grid space.
\begin{proposition}[Multi-level projection leakage identity]
\label{prop:1d-leakage}
Let $v\in V_m\subset V_N$ for $0\leq m\leq N$, and define
\[
    g
    :=
    P_{V_m}D_Nv,
    \qquad
    e
    :=
    (I-P_{V_m})D_Nv.
\]
Then
\begin{equation}\label{eq:1d-leakage}
    \norm{e}^2
    =
    \left(2^{N-m}-1\right)\norm{g}^2.
\end{equation}
\end{proposition}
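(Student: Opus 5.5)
Since $g\perp e$, the Pythagorean theorem gives $\norm{D_Nv}^2=\norm{g}^2+\norm{e}^2$. It therefore suffices to prove
\[
\norm{D_Nv}^2 = 2^{N-m}\norm{g}^2 ,
\]
and we establish this by computing both sides explicitly in terms of the coarse values of $v$. By Lemma~\ref{lem:multilevel}, $g=2^{m-N}D_mv$, so $\norm{g}^2=2^{2(m-N)}\norm{D_mv}^2$. The identity to prove thus becomes
\[
\norm{D_Nv}^2 = 2^{m-N}\norm{D_mv}^2 .
\]

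We now compute $\norm{D_Nv}^2$ directly. Let $v_j$ be the value of $v$ on the coarse cell $I_{m,j}$. Each coarse cell contains $2^{N-m}$ fine cells of size $h_N=2^{-N}$, and on all of them $v$ takes the value $v_j$. Hence $D_Nv$ vanishes on every fine cell except the leftmost child of each coarse cell. On that child it equals $v_j-v_{j-1}=(D_mv)_j$, exactly as in the proof of Lemma~\ref{lem:single-level}, with periodic indexing taking care of $j=0$. Consequently
\[
\norm{D_Nv}^2 = h_N\sum_j (v_j-v_{j-1})^2 = 2^{-N}\sum_j (v_j-v_{j-1})^2 ,
\]
whereas
\[
\norm{D_mv}^2 = 2^{-m}\sum_j (v_j-v_{j-1})^2 .
\]
The ratio of the two is $2^{m-N}$, which is the desired identity.

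Combining the steps,
\[
\norm{e}^2 = \norm{D_Nv}^2-\norm{g}^2 = \left(2^{N-m}-1\right)\norm{g}^2 ,
\]
which is \eqref{eq:1d-leakage}. No step is a real obstacle. The only point needing care is checking that, for a coarse-level function, the fine difference is supported on one child per coarse cell and not spread over several, which is immediate from the piecewise-constant structure. An alternative route would be to use Lemma~\ref{lem:D-energy} to write $\norm{D_Nv}^2=2\ip{v}{D_Nv}=2\ip{v}{g}$, and then apply Lemma~\ref{lem:multilevel} together with Lemma~\ref{lem:D-energy} at level $m$. This gives $\norm{D_Nv}^2=2^{m-N}\norm{D_mv}^2$ without the cell-by-cell computation.
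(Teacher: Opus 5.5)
Your proof is correct. It has the same skeleton as the paper's: Pythagoras for $D_Nv=g+e$, Lemma~\ref{lem:multilevel} for $g$, and reduction to the identity $\norm{D_Nv}^2=2^{m-N}\norm{D_mv}^2$. The difference is in how you prove that last identity.

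You prove it by a cell count. Only the leftmost fine child of each coarse cell carries a nonzero difference, equal to $(D_mv)_j$, so the two norms differ only by the ratio $h_N/h_m$ of cell sizes. The paper instead uses exactly the alternative you sketch at the end. Since $e\perp V_m\ni v$, we have $\ip{v}{e}=0$, and Lemma~\ref{lem:D-energy} at both levels gives $\norm{D_Nv}^2=2\ip{v}{D_Nv}=2\ip{v}{g}=2^{m-N}\cdot 2\ip{v}{D_mv}=2^{m-N}\norm{D_mv}^2$.

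Your count is more elementary and self-contained. The same count even yields $g$ directly, since averaging over the $2^{N-m}$ children of a coarse cell picks up a single nonzero value, so Lemma~\ref{lem:multilevel} is not strictly needed. The paper's route avoids the index bookkeeping. It also shows the leakage identity as a consequence of the energy identity $2\ip{v}{Dv}=\norm{Dv}^2$ combined with $v\perp e$, which is the same mechanism reused in \eqref{eq:x-projected-energy} for the stability estimate.
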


\begin{proof}
Set $\alpha:=2^{m-N}$. By Lemma~\ref{lem:multilevel},
\[
    g=\alpha D_mv,
    \qquad
    \norm{g}^2
    =
    \alpha^2\norm{D_mv}^2.
\]
Since $e\perp V_m$ and $v\in V_m$, we have $\ip{v}{e}=0$. Applying
Lemma~\ref{lem:D-energy} at levels $N$ and $m$ gives
\begin{align*}
    \norm{D_Nv}^2
    =
    2\ip{v}{D_Nv}
    =
    2\ip{v}{g+e}
    =
    2\alpha\ip{v}{D_mv}
    =
    \alpha\norm{D_mv}^2.
\end{align*}
Because $D_Nv=g+e$ is an orthogonal decomposition,
\begin{align*}
    \norm{e}^2
    =
    \norm{D_Nv}^2-\norm{g}^2
    =
    (\alpha-\alpha^2)\norm{D_mv}^2
    =
    (\alpha^{-1}-1)\norm{g}^2
    =
    \left(2^{N-m}-1\right)\norm{g}^2.
\end{align*}
\end{proof}

\subsection{Two-dimensional projection leakage identities}\label{subsec:2d-leakage}

Now we introduce the two-dimensional projection leakage identities, which quantifies the projection leakage energy of the sparse grid projection. This serves as an important tool for the stability analysis of the sparse grid scheme.

\begin{lemma}\label{lem:fixed-level-projection}
Let $0\leq q\leq N$. For any $\phi\in V_N^x$
and $\psi\in W_q^y$,
\[
    P_{\cS_N}\bigl(\phi\psi\bigr)
    =
    \bigl(P_{V_{N-q}^x}\phi\bigr)\psi.
\]
More generally, for any $f\in V_N^x\otimes W_q^y$,
\[
    P_{\cS_N}f
    =
    \left(P_{V_{N-q}^x}\otimes I_y\right)f.
\]
\end{lemma}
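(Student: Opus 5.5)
The plan is to use the block-orthogonal structure of the full grid space. By \eqref{eq:full-space}, $\cF_N=\bigoplus_{0\le p,q\le N}W_p^x\otimes W_q^y$ is an orthogonal direct sum, because the one-dimensional decompositions \eqref{eq:haar-decomposition} are orthogonal and tensor products of orthogonal subspaces are orthogonal in $L^2(\T^2)$. The sparse grid space $\cS_N$ in \eqref{eq:sparse-space} is the sum of a subfamily of these blocks. Hence $P_{\cS_N}$ acts as the identity on each block $W_p^x\otimes W_q^y$ with $p+q\le N$ and annihilates every block with $p+q>N$. Equivalently,
\[
  P_{\cS_N}=\sum_{p+q\le N}P_{W_p^x}\otimes P_{W_q^y}.
\]

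Next, take $f\in V_N^x\otimes W_q^y$. Since $V_N^x=\bigoplus_{p=0}^N W_p^x$, we can write
\[
  f=\sum_{p=0}^N f_p,\qquad f_p:=(P_{W_p^x}\otimes I_y)f\in W_p^x\otimes W_q^y.
\]
Applying $P_{\cS_N}$ keeps exactly the terms with $p\le N-q$ and removes the rest. This gives
\[
  P_{\cS_N}f=\sum_{p=0}^{N-q}(P_{W_p^x}\otimes I_y)f.
\]
By \eqref{eq:haar-decomposition} at level $N-q$, the sum $\sum_{p=0}^{N-q}P_{W_p^x}$ equals $P_{V_{N-q}^x}$, since the $W_p^x$ are mutually orthogonal and span $V_{N-q}^x$. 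This proves the general statement. The first statement is the special case $f=\phi\psi$, where
\[
  (P_{V_{N-q}^x}\otimes I_y)(\phi\psi)=(P_{V_{N-q}^x}\phi)\,\psi.
\]

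The only point needing care is the justification that the blocks $W_p^x\otimes W_q^y$ are mutually orthogonal and that the projection onto an orthogonal sum of closed subspaces is the sum of the individual projections. I will verify the first fact on elementary tensors via
\[
  \ip{\phi_1\psi_1}{\phi_2\psi_2}=\ip{\phi_1}{\phi_2}\ip{\psi_1}{\psi_2},
\]
and extend it by linearity. The second fact is standard in finite dimensions. The range $0\le q\le N$ ensures $N-q\ge0$, so $V_{N-q}^x$ is well defined.
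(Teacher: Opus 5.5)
Your proposal is correct and follows essentially the paper's argument: both write $P_{\cS_N}=\sum_{p+r\le N}P_{W_p^x}\otimes P_{W_r^y}$, observe that only the $y$-level $q$ survives, and identify the truncated sum $\sum_{p=0}^{N-q}P_{W_p^x}$ with $P_{V_{N-q}^x}$. The only difference is the order: the paper proves the separable case first and extends to general $f\in V_N^x\otimes W_q^y$ by linearity, whereas you treat general $f$ directly and read off the separable case as a specialization, which changes nothing of substance.
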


\begin{proof}
The orthogonal decomposition \eqref{eq:sparse-space} gives $P_{\cS_N} = \sum_{p+r\leq N}P_{W_p^x}\otimes P_{W_r^y}$.
For $\psi\in W_q^y$, the orthogonality implies $P_{W_r^y}\psi = \delta_{r,q}\psi$. Thus, for $\phi\in V_N^x$,
\[
\begin{aligned}
    P_{\cS_N}\bigl(\phi(x)\psi(y)\bigr)
    =
    \sum_{p+r\leq N}
    \bigl(P_{W_p^x}\phi\bigr)(x)
    \bigl(P_{W_r^y}\psi\bigr)(y)
    =
    \sum_{p=0}^{N-q}
    \bigl(P_{W_p^x}\phi\bigr)(x)\psi(y)
    =
    \bigl(P_{V_{N-q}^x}\phi\bigr)(x)\psi(y),
\end{aligned}
\]
where we have used the fact that $V_{N-q}^x=\bigoplus_{p=0}^{N-q}W_p^x$ in the last step.

For any $f\in V_N^x\otimes W_q^y$, we can write $f$ as
\[
    f(x,y)
    =
    \sum_{k=1}^{d_q}\phi_k(x)\psi_k(y),
    \qquad \phi_k\in V_N^x, \, \psi_k\in W_q^y.
\]
By linearity and the identity established above,
\[
\begin{aligned}
    P_{\cS_N}f
    =
    \sum_{k=1}^{d_q}
    P_{\cS_N}\bigl(\phi_k(x)\psi_k(y)\bigr)
    =
    \sum_{k=1}^{d_q}
    \bigl(P_{V_{N-q}^x}\phi_k\bigr)(x)\psi_k(y)
    =
    \left(P_{V_{N-q}^x}\otimes I_y\right)f.
\end{aligned}
\]
\end{proof}

\begin{proposition}[Two-dimensional projection leakage identities]
\label{prop:2d-leakage}
For $u\in\cS_N$, define
\[
    X_{pq}
    :=
    P_{W_p^x\otimes W_q^y}A_xu,
    \qquad
    Y_{pq}
    :=
    P_{W_p^x\otimes W_q^y}A_yu,
    \qquad
    0\leq p,q\leq N.
\]
Then
\begin{align}
    \norm{(I-P_{\cS_N})A_xu}^2
    &=
    \sum_{p+q\leq N}
    (2^q-1)\norm{X_{pq}}^2,
    \label{eq:x-leakage}
    \\
    \norm{(I-P_{\cS_N})A_yu}^2
    &=
    \sum_{p+q\leq N}
    (2^p-1)\norm{Y_{pq}}^2.
    \label{eq:y-leakage}
\end{align}
\end{proposition}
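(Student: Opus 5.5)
By symmetry between $x$ and $y$, it suffices to prove \eqref{eq:x-leakage}; the identity \eqref{eq:y-leakage} then follows by exchanging the roles of the variables. The plan is to decompose $u$ along the $y$-hierarchy and reduce the statement to the one-dimensional leakage identity of Proposition~\ref{prop:1d-leakage}, applied fiberwise.

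\emph{Step 1: decompose $u$.} Write $u=\sum_{q=0}^N u_q$ with $u_q:=(I_x\otimes P_{W_q^y})u$. Since $u\in\cS_N$, each $u_q$ lies in $V_{N-q}^x\otimes W_q^y$, because the admissible $p$ satisfy $p\le N-q$. The operator $A_x=D_N^x\otimes I_y$ acts only in $x$, so $A_xu_q\in V_N^x\otimes W_q^y$. The pieces $A_xu_q$ for different $q$ are therefore mutually orthogonal, and $P_{\cS_N}$ preserves the $y$-level $q$.

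\emph{Step 2: use the fixed-level projection.} By Lemma~\ref{lem:fixed-level-projection}, $P_{\cS_N}A_xu_q=(P_{V_{N-q}^x}\otimes I_y)A_xu_q$. Hence
\[
\norm{(I-P_{\cS_N})A_xu}^2=\sum_{q=0}^N\norm{\bigl((I-P_{V_{N-q}^x})D_N^x\otimes I_y\bigr)u_q}^2 .
\]
Expand $u_q=\sum_k\phi_k\otimes\psi_k$ with $\phi_k\in V_{N-q}^x$ and $\{\psi_k\}$ an orthonormal basis of $W_q^y$. The norm then splits as a sum over $k$ of one-dimensional quantities $\norm{(I-P_{V_{N-q}})D_N\phi_k}^2$. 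Applying Proposition~\ref{prop:1d-leakage} with $m=N-q$ to each $\phi_k$ gives $(2^{q}-1)\norm{P_{V_{N-q}}D_N\phi_k}^2$. Resumming over $k$ yields
\[
(2^q-1)\,\norm{(P_{V_{N-q}^x}\otimes I_y)A_xu_q}^2 .
\]

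\emph{Step 3: identify the retained part with the $X_{pq}$.} The function $(P_{V_{N-q}^x}\otimes I_y)A_xu_q$ equals $\sum_{p\le N-q}P_{W_p^x\otimes W_q^y}A_xu_q$. Because $P_{W_p^x\otimes W_q^y}A_xu_{q'}=0$ for $q'\neq q$, this is $\sum_{p\le N-q}X_{pq}$. By orthogonality of the $W_p^x$, its squared norm is $\sum_{p\le N-q}\norm{X_{pq}}^2$. Summing over $q$ gives \eqref{eq:x-leakage}.

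The main point requiring care is the fiberwise application of the one-dimensional identity in Step 2. Proposition~\ref{prop:1d-leakage} is stated for single functions, so the tensor-product reduction needs the orthonormal-basis expansion in $W_q^y$, which makes the norms split exactly. Beyond that, the argument is bookkeeping with the orthogonal Haar decomposition.
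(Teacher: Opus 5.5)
Your proposal is correct and follows essentially the same route as the paper. You split $u$ by $y$-level into pieces in $V_{N-q}^x\otimes W_q^y$ and use Lemma~\ref{lem:fixed-level-projection} with an orthonormal basis of $W_q^y$ to apply Proposition~\ref{prop:1d-leakage} fiberwise with $m=N-q$; you then sum orthogonally over $q$ and identify the retained part with $\sum_{p\le N-q}X_{pq}$. The only difference is cosmetic: you work directly on the fixed-$y$-level slice (with $u_q=(I_x\otimes P_{W_q^y})u$ made explicit), whereas the paper first treats a single Haar block before passing to the slice.
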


\begin{proof}
We only prove \eqref{eq:x-leakage}; the proof of \eqref{eq:y-leakage} follows in the same way by symmetry. The proof consists of three steps, which gradually extend the identity from a single Haar block to the entire sparse grid space.

\underline{Step 1: Functions in a single Haar block.}
Fix $0\leq q\leq N$ and $0\leq p\leq N-q$.
We first consider a separable function
\[
    v(x,y)=\phi(x)\psi(y),
    \qquad
    \phi\in W_p^x,
    \quad
    \psi\in W_q^y.
\]
Since $A_x=D_N^x\otimes I_y$, we have
\[
    A_xv=(D_N^x\phi)\psi.
\]
By Lemma~\ref{lem:fixed-level-projection},
\[
\begin{aligned}
    P_{\cS_N}A_xv
    =
    \left(P_{V_{N-q}^x}D_N^x\phi\right)\psi,
    \qquad
    (I-P_{\cS_N})A_xv
    =
    \left((I_x-P_{V_{N-q}^x})D_N^x\phi\right)\psi.
\end{aligned}
\]
Because $p\leq N-q$, we have $\phi\in W_p^x\subseteq V_{N-q}^x$. Proposition~\ref{prop:1d-leakage}, applied with $m=N-q$,
 gives
\[
    \norm{(I_x-P_{V_{N-q}^x})D_N^x\phi}_{L_x^2}^2
    =
    (2^q-1)
    \norm{P_{V_{N-q}^x}D_N^x\phi}_{L_x^2}^2.
\]
Multiplying both sides by $\norm{\psi}_{L_y^2}^2$ yields
\[
    \norm{(I-P_{\cS_N})A_xv}^2
    =
    (2^q-1)\norm{P_{\cS_N}A_xv}^2.
\]

Next, for any $v\in W_p^x\otimes W_q^y$. Choose an orthonormal basis
$\{\psi_{q,k}\}_{k=1}^{d_q}$ of $W_q^y$,
where $d_q:=\dim W_q^y$, and write
\[
    v(x,y)
    =
    \sum_{k=1}^{d_q}\phi_k(x)\psi_{q,k}(y),
    \qquad
    \phi_k\in W_p^x.
\]
Applying Lemma~\ref{lem:fixed-level-projection} to each term gives
\[
\begin{aligned}
    P_{\cS_N}A_xv
    =
    \sum_{k=1}^{d_q}
    \left(P_{V_{N-q}^x}D_N^x\phi_k\right)\psi_{q,k},
    \qquad
    (I-P_{\cS_N})A_xv
    =
    \sum_{k=1}^{d_q}
    \left((I_x-P_{V_{N-q}^x})D_N^x\phi_k\right)\psi_{q,k}.
\end{aligned}
\]
The orthonormality of the $\psi_{q,k}$ eliminates the cross terms when taking squared norms in $L^2$. Consequently,
\[
\begin{aligned}
    \norm{(I-P_{\cS_N})A_xv}^2
    &=
    \sum_{k=1}^{d_q}
    \norm{(I_x-P_{V_{N-q}^x})D_N^x\phi_k}_{L_x^2}^2
    \\
    &=
    (2^q-1)
    \sum_{k=1}^{d_q}
    \norm{P_{V_{N-q}^x}D_N^x\phi_k}_{L_x^2}^2
    \\
    &=
    (2^q-1)\norm{P_{\cS_N}A_xv}^2.
\end{aligned}
\]
From this step, we observe that the identity \eqref{eq:x-leakage} holds for any $v\in W_p^x\otimes W_q^y$ with $p+q\leq N$ and the factor $(2^q-1)$ only depends on $q$.

\underline{Step 2: Functions with a fixed $y$-level.}
We next extend this identity to
\[
    v\in V_{N-q}^x\otimes W_q^y
    =
    \bigoplus_{p=0}^{N-q}W_p^x\otimes W_q^y.
\]
Then, we write
\[
    v=\sum_{p=0}^{N-q}v_p,
    \qquad
    v_p\in W_p^x\otimes W_q^y.
\]
Using the same orthonormal basis $\{ \psi_{q,k} \}_{k=1}^{d_q}$ of $W_q^y$ for every $p$, we can expand
\[
    v_p(x,y)
    =
    \sum_{k=1}^{d_q}\phi_{p,k}(x)\psi_{q,k}(y),
    \qquad
    \phi_{p,k}\in W_p^x.
\]
Collecting the coefficients of each $\psi_{q,k}$ gives
\[
    v(x,y)
    =
    \sum_{k=1}^{d_q}\Phi_k(x)\psi_{q,k}(y),
    \qquad
    \Phi_k:=\sum_{p=0}^{N-q}\phi_{p,k}
    \in V_{N-q}^x.
\]
We may therefore apply Proposition~\ref{prop:1d-leakage}
to each $\Phi_k$ with $m=N-q$. Repeating the calculation in Step~1 yields
\[
\begin{aligned}
    \norm{(I-P_{\cS_N})A_xv}^2
    &=
    \sum_{k=1}^{d_q}
    \norm{(I_x-P_{V_{N-q}^x})D_N^x\Phi_k}_{L_x^2}^2
    \\
    &=
    (2^q-1)
    \sum_{k=1}^{d_q}
    \norm{P_{V_{N-q}^x}D_N^x\Phi_k}_{L_x^2}^2
    \\
    &=
    (2^q-1)\norm{P_{\cS_N}A_xv}^2.
\end{aligned}
\]
Thus, the same identity holds throughout
$V_{N-q}^x\otimes W_q^y$.

\underline{Step 3: Functions in the sparse grid space.} From the decomposition \eqref{eq:sparse-space}, we can write $\cS_N=\bigoplus_{q=0}^N V_{N-q}^x\otimes W_q^y$. Accordingly, for $u\in\cS_N$, we can decompose it as
\[
    u=\sum_{q=0}^N u^{(q)},
    \qquad
    u^{(q)}\in V_{N-q}^x\otimes W_q^y,
\]
Then, we have
\[
    P_{\cS_N}A_xu = \sum_{q=0}^N g_q, \qquad (I-P_{\cS_N})A_xu = \sum_{q=0}^N e_q,
\]
with
\[
    g_q:=P_{\cS_N}A_xu^{(q)},
    \qquad
    e_q:=(I-P_{\cS_N})A_xu^{(q)}.
\]
By Step~2,
\begin{equation*}
    \norm{e_q}^2 = (2^q-1)\norm{g_q}^2.
\end{equation*}

Since $u^{(q)}\in V_{N-q}^x\otimes W_q^y$, we have $A_xu^{(q)}\in V_N^x\otimes W_q^y$ and $P_{\cS_N}A_xu^{(q)}\in V_{N-q}^x\otimes W_q^y$. Therefore,
\[
    g_q\in V_{N-q}^x\otimes W_q^y, \qquad e_q\in V_N^x\otimes W_q^y.
\]
The orthogonality of the $y$-levels implies that $e_q\perp e_r$ whenever $q\neq r$. Then we have
\begin{equation}\label{eq:orthogonal-sum-2d-proof}
    \norm{(I-P_{\cS_N})A_xu}^2
    =
    \sum_{q=0}^N\norm{e_q}^2
    =
    \sum_{q=0}^N(2^q-1)\norm{g_q}^2.
\end{equation}

Next, we express $g_q$ in \eqref{eq:orthogonal-sum-2d-proof} in terms of the blocks $X_{pq}$. By the definition of $X_{pq}$, we compute
\[
    X_{pq}
    =
    P_{W_p^x\otimes W_q^y}A_xu
    =
    \sum_{r=0}^N
    P_{W_p^x\otimes W_q^y}A_xu^{(r)}
    =
    P_{W_p^x\otimes W_q^y}A_xu^{(q)}.
\]
In the last step, we have used the fact that $A_xu^{(r)}\in V_N^x\otimes W_r^y$ and the orthogonality of the $y$-levels implies that $P_{W_p^x\otimes W_q^y}A_xu^{(r)}=0$ for $r\neq q$.
In addition, by the definition of $g_q$, we have
\[
    g_q
    =
    P_{\cS_N}A_xu^{(q)}
    =
    \sum_{p+r\leq N}
    P_{W_p^x\otimes W_r^y}A_xu^{(q)}
    =
    \sum_{p=0}^{N-q}
    P_{W_p^x\otimes W_q^y}A_xu^{(q)}
    =
    \sum_{p=0}^{N-q}X_{pq}.
\]
By orthogonality,
\begin{equation}\label{eq:gq-norm-2d-proof}
    \norm{g_q}^2
    =
    \sum_{p=0}^{N-q}\norm{X_{pq}}^2.    
\end{equation}
Substituting \eqref{eq:gq-norm-2d-proof} into \eqref{eq:orthogonal-sum-2d-proof} gives
\[
    \norm{(I-P_{\cS_N})A_xu}^2
    =
    \sum_{q=0}^N(2^q-1)
    \sum_{p=0}^{N-q}\norm{X_{pq}}^2
    =
    \sum_{p+q\leq N}(2^q-1)\norm{X_{pq}}^2.
\]
This proves \eqref{eq:x-leakage}.
\end{proof}

\begin{proposition}[Zero blocks]
\label{prop:zero-axis-blocks}
For $u\in\cS_N$, let $X_{pq}$ and $Y_{pq}$ be defined as in Proposition~\ref{prop:2d-leakage}. Then
\begin{equation}\label{eq:zero-axis-blocks}
    X_{0q}=0, \qquad Y_{p0}=0, \qquad 0\leq p,q\leq N.
\end{equation}
\end{proposition}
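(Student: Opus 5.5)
The claim $X_{0q}=0$ says that $A_xu$ has no component in $W_0^x\otimes W_q^y$, i.e. its $x$-average vanishes. The plan is to reduce this to the one-dimensional fact that the fine-grid difference operator kills constants on the cell average. Since $W_0^x=V_0^x$ is the space of constants in $x$, we have $P_{W_0^x\otimes W_q^y}=P_{V_0^x}\otimes P_{W_q^y}$. Moreover, because $A_x=D_N^x\otimes I_y$ acts only in $x$, it commutes with $I_x\otimes P_{W_q^y}$. Hence
\[
    X_{0q}=\bigl(P_{V_0^x}\otimes P_{W_q^y}\bigr)\bigl(D_N^x\otimes I_y\bigr)u
    =\bigl(P_{V_0^x}D_N^x\otimes P_{W_q^y}\bigr)u ,
\]
and the claim follows once we show $P_{V_0^x}D_N^x=0$ as an operator on $V_N^x$.

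For that one-dimensional fact, take any $\phi\in V_N$. Then $P_{V_0}D_N\phi$ is the constant equal to $\int_\T D_N\phi\,dx=h\sum_j(\phi_j-\phi_{j-1})$. This sum telescopes to zero by periodic indexing. Alternatively, one can apply Lemma~\ref{lem:multilevel} with $m=0$ after writing $P_{V_0}D_N\phi=P_{V_0}D_N(P_{V_0}\phi)+P_{V_0}D_N(\phi-P_{V_0}\phi)$, but the direct telescoping computation is simpler and covers all of $V_N$, not only $V_0$. Note in particular that $D_0$ on a single cell is $v_0-v_0=0$, consistent with Lemma~\ref{lem:multilevel}.

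The statement $Y_{p0}=0$ follows by the symmetric argument in $y$, using $A_y=I_x\otimes D_N^y$ and $P_{V_0^y}D_N^y=0$. The identity holds for all $u\in\cF_N$, so in particular for $u\in\cS_N$, and also for all $q\le N$, including pairs with $p+q>N$.

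The main point to check carefully is the commutation and tensor factorization of the projection. This is routine because all the operators involved are tensor products acting on separate variables. It can be verified on separable functions $\phi(x)\psi(y)$ and then extended by linearity, exactly as in Lemma~\ref{lem:fixed-level-projection}.
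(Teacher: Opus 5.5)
Your proposal is correct and is essentially the paper's argument: the paper notes that $\sum_i (A_xu)_{ij}=\sum_i(u_{ij}-u_{i-1,j})=0$ for each fixed $j$, so $A_xu$ has no $W_0^x$ component, which is your telescoping identity $P_{V_0^x}D_N^x=0$. Your tensor factorization $P_{W_0^x\otimes W_q^y}A_x=(P_{V_0^x}D_N^x)\otimes P_{W_q^y}$ only makes explicit the step the paper leaves implicit.
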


\begin{proof}
A periodic $x$-difference has zero mean in the $x$-direction: for every fixed $j$,
\[
    \sum_i(A_xu)_{ij}
    =
    \sum_i(u_{ij}-u_{i-1,j})
    =
    0.
\]
Thus $A_xu$ has no $W_0^x$ component, so $X_{0q}=0$. Similarly,
$A_yu$ has no $W_0^y$ component, and hence $Y_{p0}=0$.
\end{proof}

\subsection{Sharp CFL condition}\label{subsec:sharp-cfl}

Now we are ready to prove the main result in this section.
\begin{theorem}[Sharp CFL condition for the sparse grid scheme in 2D]
\label{thm:2d-square}
    Let $N\geq1$, $c_1,c_2\geq0$, and $\max(c_1,c_2)>0$. The sparse grid scheme \eqref{eq:sparse-update} is stable at every time step,
    \[
        \norm{u^{n+1}}
        \leq
        \norm{u^n},
        \qquad
        n\geq0,
    \]
    if and only if
    \begin{equation}\label{eq:sharp-sparse-cfl}
        \Delta t
        \leq
        \frac{h}{\max(c_1,c_2)}.
    \end{equation}
\end{theorem}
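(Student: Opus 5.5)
The plan is to prove sufficiency with an exact energy identity that is split into Haar blocks, and necessity with alternating modes at the finest level.

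Set $\lambda_1:=c_1\Delta t/h$ and $\lambda_2:=c_2\Delta t/h$, so that \eqref{eq:sharp-sparse-cfl} reads $\max(\lambda_1,\lambda_2)\le 1$.

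\emph{Energy identity.} Write $u=u^n\in\cS_N$. Since $P_{\cS_N}$ is self-adjoint and $u\in\cS_N$, we have $\ip{u}{P_{\cS_N}A_xu}=\ip{u}{A_xu}$, and the same holds for $A_y$. Expanding the square of \eqref{eq:sparse-update} and applying Lemma~\ref{lem:D-energy} in each direction, so that $2\ip{u}{A_xu}=\norm{A_xu}^2$, gives
\[
\norm{u^{n+1}}^2-\norm{u}^2
=\norm{\lambda_1P_{\cS_N}A_xu+\lambda_2P_{\cS_N}A_yu}^2
-\lambda_1\norm{A_xu}^2-\lambda_2\norm{A_yu}^2 .
\]
Each full norm splits orthogonally as $\norm{A_xu}^2=\norm{P_{\cS_N}A_xu}^2+\norm{(I-P_{\cS_N})A_xu}^2$, and likewise for $A_y$. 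Proposition~\ref{prop:2d-leakage} then lets us rewrite every term block by block in the $X_{pq}$ and $Y_{pq}$:
\[
\norm{u^{n+1}}^2-\norm{u}^2=\sum_{p+q\le N}\Bigl(\norm{\lambda_1X_{pq}+\lambda_2Y_{pq}}^2-\lambda_12^q\norm{X_{pq}}^2-\lambda_22^p\norm{Y_{pq}}^2\Bigr).
\]
This is the key reduction. The gain over the full grid comes from the factors $2^q$ and $2^p$ in place of $1$.

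\emph{Sufficiency, blockwise.} It suffices to show that every summand is nonpositive when $\lambda_1,\lambda_2\le1$. I will split into cases using Proposition~\ref{prop:zero-axis-blocks}.
\begin{itemize}
\item If $p=0$, then $X_{0q}=0$ and the summand is $(\lambda_2^2-\lambda_2 2^0)\norm{Y_{0q}}^2\le0$.
\item If $q=0$, the argument is symmetric, using $Y_{p0}=0$.
\item If $p,q\ge1$, then $2^p,2^q\ge2$. Write $a=\norm{X_{pq}}$ and $b=\norm{Y_{pq}}$. Cauchy--Schwarz and AM--GM give $\norm{\lambda_1X+\lambda_2Y}^2\le\lambda_1^2a^2+\lambda_2^2b^2+\lambda_1\lambda_2(a^2+b^2)$. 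Since each $\lambda_i\le1$, this is at most $2\lambda_1a^2+2\lambda_2b^2\le\lambda_12^qa^2+\lambda_22^pb^2$.
\end{itemize}
The delicate point is exactly that the mixed term lives only on blocks with $p,q\ge1$, where the leakage factor is at least $2$. The zero-block proposition handles the axis blocks, where the leakage factor is only $1$.

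\emph{Necessity.} Suppose $\lambda_1>1$. Take $u(x,y)=\phi(x)$ with $\phi\in V_N^x$ the alternating function $\phi_i=(-1)^i$. This $u$ lies in $V_N^x\otimes W_0^y\subset\cS_N$ (this needs $N\ge1$ so that the alternating mode is periodic with period $2^N$). For this $u$ we have $A_yu=0$ and $A_xu=2u\in\cS_N$, so the update gives $u^{1}=(1-2\lambda_1)u$. Because $|1-2\lambda_1|>1$ when $\lambda_1>1$, the norm grows. The case $\lambda_2>1$ is symmetric. Since $\max(c_1,c_2)>0$, condition \eqref{eq:sharp-sparse-cfl} is equivalent to $\max(\lambda_1,\lambda_2)\le1$, which completes the equivalence.
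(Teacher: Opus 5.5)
Your proof is correct and takes essentially the same route as the paper. Sufficiency uses the energy identity from Lemma~\ref{lem:D-energy} and the leakage identity of Proposition~\ref{prop:2d-leakage}. The mixed term is absorbed on the blocks with $p,q\geq1$, where the leakage factor is at least $2$, and the zero axis blocks of Proposition~\ref{prop:zero-axis-blocks} handle the rest. Necessity uses the finest-level alternating modes. The only difference is bookkeeping: you check that each Haar-block summand is nonpositive, whereas the paper bounds the aggregated cross term by the total discarded energy and keeps the residual terms $r_x(1-r_x)$, $r_x(1-r_y)$, and so on.
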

\begin{proof}
Define $r_x := \frac{c_1\Delta t}{h}$ and $r_y := \frac{c_2\Delta t}{h}$.
We first prove necessity. Let $v = (v_{ij})\in\cF_N$ a function in the full grid space. Consider the $x$-alternating mode, which is defined by
\[
    v_{ij}=(-1)^i, \qquad 0\leq i,j<2^N.
\]
It actually belongs to $V_N^x\otimes W_0^y \subset\cS_N$ and satisfies
\[
    A_xv=2v,
    \qquad
    A_yv=0,
\]
and therefore after projection onto the sparse grid space,
\[
    P_{\cS_N}A_xv=2v,
    \qquad
    P_{\cS_N}A_yv=0.
\]
Taking $u^n=v$ in \eqref{eq:sparse-update} gives
\[
    u^{n+1}
    =
    (1-2r_x)v.
\]
Thus $L^2$ stability requires $\abs{1-2r_x}\leq1$, which is equivalent to $0\leq r_x\leq1$. Similarly, the $y$-alternating mode $v_{ij}=(-1)^j$ gives $0\leq r_y\leq1$. The conditions $r_x,r_y\leq1$ are equivalent to \eqref{eq:sharp-sparse-cfl}. Hence \eqref{eq:sharp-sparse-cfl} is necessary.

We next prove sufficiency. Assuming \eqref{eq:sharp-sparse-cfl} holds true, we have $0\leq r_x,r_y\leq1$. Since $u^n\in\cS_N$ and $P_{\cS_N}$ is the $L^2$ orthogonal projection onto $\cS_N$, Lemma~\ref{lem:D-energy} gives
\begin{equation}
    2\ip{u^n}{P_{\cS_N}A_xu^n}
    =
    2\ip{u^n}{A_xu^n}
    =
    \norm{A_xu^n}^2
    =
    \norm{P_{\cS_N}A_xu^n}^2
    +
    \norm{(I-P_{\cS_N})A_xu^n}^2.
    \label{eq:x-projected-energy}
\end{equation}
Similarly,
\begin{equation}
    2\ip{u^n}{P_{\cS_N}A_yu^n}
    =
    2\ip{u^n}{A_yu^n}
    =
    \norm{A_yu^n}^2
    =
    \norm{P_{\cS_N}A_yu^n}^2
    +
    \norm{(I-P_{\cS_N})A_yu^n}^2.
    \label{eq:y-projected-energy}
\end{equation}

Taking the $L^2$ norm of \eqref{eq:sparse-update}, we obtain
\begin{align}
    \norm{u^{n+1}}^2
    &=
    \norm{
        u^n
        -r_xP_{\cS_N}A_xu^n
        -r_yP_{\cS_N}A_yu^n
    }^2
    \notag\\
    &=
    \norm{u^n}^2
    +r_x^2\norm{P_{\cS_N}A_xu^n}^2
    +r_y^2\norm{P_{\cS_N}A_yu^n}^2
    \notag\\
    &\quad
    -2r_x\ip{u^n}{P_{\cS_N}A_xu^n}
    -2r_y\ip{u^n}{P_{\cS_N}A_yu^n}
    +2r_xr_y
    \ip{
        P_{\cS_N}A_xu^n
    }{
        P_{\cS_N}A_yu^n
    }.
    \label{eq:sparse-energy-expansion-first}
\end{align}
Substituting \eqref{eq:x-projected-energy} and \eqref{eq:y-projected-energy} into \eqref{eq:sparse-energy-expansion-first} gives
\begin{align}
    \norm{u^{n+1}}^2
    &=
    \norm{u^n}^2
    -r_x(1-r_x)
    \norm{P_{\cS_N}A_xu^n}^2
    -r_y(1-r_y)
    \norm{P_{\cS_N}A_yu^n}^2
    \notag\\
    &\quad
    -r_x
    \norm{(I-P_{\cS_N})A_xu^n}^2
    -r_y
    \norm{(I-P_{\cS_N})A_yu^n}^2
    +2r_xr_y
    \ip{
        P_{\cS_N}A_xu^n
    }{
        P_{\cS_N}A_yu^n
    }.
    \label{eq:sparse-energy-expansion}
\end{align}

Let $X_{pq}$ and $Y_{pq}$ be the projections of $A_xu^n$ and $A_yu^n$ onto $W_p^x\otimes W_q^y$, respectively, as defined in Proposition~\ref{prop:2d-leakage}. Then we have the estimate for the cross term in \eqref{eq:sparse-energy-expansion}:
\begin{align*}
    2
    \ip{
        P_{\cS_N}A_xu^n
    }{
        P_{\cS_N}A_yu^n
    }
    &=
    2
    \ip{\sum_{\substack{p+q\leq N}} X_{pq}}{\sum_{\substack{p'+q'\leq N}} Y_{p'q'}}
    \\    
    &=
    2
    \sum_{\substack{p+q\leq N}}
    \ip{X_{pq}}{Y_{pq}}
    \qquad \text{(orthogonality of $W_p^x\otimes W_q^y$)}
    \\    
    &=
    2
    \sum_{\substack{p+q\leq N\\p,q\geq1}}
    \ip{X_{pq}}{Y_{pq}}
    \qquad \text{(Proposition~\ref{prop:zero-axis-blocks})}
    \\
    &\leq
    \sum_{\substack{p+q\leq N\\p,q\geq1}}
    \left(
        \norm{X_{pq}}^2
        +
        \norm{Y_{pq}}^2
    \right)
    \\
    &\leq
    \sum_{p+q\leq N}
    \left(
        (2^q-1)\norm{X_{pq}}^2
        +
        (2^p-1)\norm{Y_{pq}}^2
    \right)
    \\
    &=
    \norm{(I-P_{\cS_N})A_xu^n}^2
    +
    \norm{(I-P_{\cS_N})A_yu^n}^2 \qquad \text{(Proposition~\ref{prop:2d-leakage})}
\end{align*}
Substituting this estimate into
\eqref{eq:sparse-energy-expansion} yields
\begin{align*}
    \norm{u^{n+1}}^2
    &\leq
    \norm{u^n}^2
    -r_x(1-r_x)
    \norm{P_{\cS_N}A_xu^n}^2
    -r_y(1-r_y)
    \norm{P_{\cS_N}A_yu^n}^2
    \\
    &\quad
    -r_x(1-r_y)
    \norm{(I-P_{\cS_N})A_xu^n}^2
    -r_y(1-r_x)
    \norm{(I-P_{\cS_N})A_yu^n}^2
    \\
    &\leq
    \norm{u^n}^2.
\end{align*}
This proves sufficiency.
\end{proof}

From the proof of Theorem~\ref{thm:2d-square}, we can derive an explicit formula for the spectral radius and operator norm of the sparse grid amplification matrix. We first introduce the following lemma which applies to any linear operator with eigenvalues at both endpoints of the unit disk.
\begin{lemma}\label{lem:endpoint-amplification}
Let $H$ be a linear operator on a finite dimensional Hilbert space such that $\norm{H}_2\leq1$ and both $1$ and $-1$ are eigenvalues of $H$. Then, for every $\mu\geq0$,
\begin{equation}
    \rho\bigl((1-\mu)I+\mu H\bigr)
    =
    \norm{(1-\mu)I+\mu H}_2
    =
    \max\{1,2\mu-1\}.
    \label{eq:endpoint-affine-scaling}
\end{equation}
\end{lemma}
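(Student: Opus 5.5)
Set $T_\mu:=(1-\mu)I+\mu H$. The plan is to bound the norm from above and the spectral radius from below by the same number $\max\{1,2\mu-1\}$. Since $\rho(T)\leq\norm{T}_2$ holds for any operator, this pins both quantities down at once.

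\emph{Lower bound.} Let $w_+$ and $w_-$ be eigenvectors of $H$ for the eigenvalues $1$ and $-1$. Then
\[
T_\mu w_+=w_+,\qquad T_\mu w_-=\bigl((1-\mu)-\mu\bigr)w_-=(1-2\mu)w_-.
\]
So $1$ and $1-2\mu$ are eigenvalues of $T_\mu$. This gives
\[
\rho(T_\mu)\geq\max\{1,\abs{1-2\mu}\}=\max\{1,2\mu-1\}.
\]
The last equality holds because for $\mu\geq0$ we have $\abs{1-2\mu}\leq1$ when $\mu\leq1$, and $\abs{1-2\mu}=2\mu-1$ when $\mu\geq1$.

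\emph{Upper bound.} We split into two cases according to the size of $\mu$.

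For $0\leq\mu\leq1$, both coefficients $1-\mu$ and $\mu$ are nonnegative. The triangle inequality gives
\[
\norm{T_\mu}_2\leq(1-\mu)+\mu\norm{H}_2\leq1.
\]

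For $\mu>1$, the coefficient $1-\mu$ is negative, so we instead use
\[
\norm{T_\mu}_2\leq\abs{1-\mu}+\mu\norm{H}_2\leq(\mu-1)+\mu=2\mu-1.
\]

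In both cases $\norm{T_\mu}_2\leq\max\{1,2\mu-1\}$. Combining this with the lower bound yields
\[
\max\{1,2\mu-1\}\leq\rho(T_\mu)\leq\norm{T_\mu}_2\leq\max\{1,2\mu-1\},
\]
which is \eqref{eq:endpoint-affine-scaling}.

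The argument has no genuine obstacle. The only point needing care is the sign of $1-\mu$ when applying the triangle inequality, which is exactly why the upper bound is split at $\mu=1$. No normality or self-adjointness of $H$ is needed, since eigenvectors give the lower bound and only $\norm{H}_2\leq1$ enters the upper bound.
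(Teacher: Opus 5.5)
Your proof is correct and follows essentially the same route as the paper: the triangle inequality gives the upper bound on $\norm{(1-\mu)I+\mu H}_2$, split at $\mu=1$, and the eigenvalues $1$ and $1-2\mu$ inherited from $H$ give the matching lower bound on the spectral radius. The only difference is presentational: you derive the lower bound once for all $\mu\geq0$, whereas the paper repeats the sandwich $\rho\leq\norm{\cdot}_2$ separately in each case.
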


\begin{proof}
For $0\leq \mu\leq1$, the triangle inequality gives $\norm{(1-\mu)I+\mu H}_2\leq1$. Since $1$ is an eigenvalue of $H$, $(1-\mu)I+\mu H$ has the eigenvalue $1$ as well. This leads to:
\begin{equation*}
    1\le \rho\bigl((1-\mu)I+\mu H\bigr)\le \norm{(1-\mu)I+\mu H}_2\le 1.
\end{equation*}
Hence, for $0\leq \mu\leq1$, we have
\[
    \rho\bigl((1-\mu)I+\mu H\bigr)=\norm{(1-\mu)I+\mu H}_2=1.
\]

For $\mu\geq1$, $\norm{(1-\mu)I+\mu H}_2\leq(\mu-1)+\mu\norm{H}_2\leq2\mu-1$. The eigenvalue $-1$ of $H$ produces the eigenvalue $(1-2\mu)$ of $(1-\mu)I+\mu H$. This leads to:
\begin{equation*}
    2\mu-1\le \rho\bigl((1-\mu)I+\mu H\bigr)\le \norm{(1-\mu)I+\mu H}_2\le 2\mu-1.
\end{equation*}
The proof is complete.
\end{proof}

Now we can derive the spectral radius and operator norm of the sparse grid amplification matrix.
\begin{corollary}\label{cor:2d-amplification-radius}
Under the assumptions of Theorem~\ref{thm:2d-square}, let $m:=\max(c_1,c_2)$ and $\nu:={\Delta t}/{h}$. Define the operators on $\cS_N$
\begin{align*}
    B:={}c_1P_{\cS_N}A_x+c_2P_{\cS_N}A_y,\qquad
    G(\nu):={} I-\nu B.
\end{align*}
Then, for any $\nu\geq0$,
\begin{align}
    \rho\bigl(G(\nu)\bigr)
    =\norm{G(\nu)}_2
    &=\max\{1,2m\nu-1\}.
    \label{eq:2d-sparse-amplification-radius}
\end{align}
\end{corollary}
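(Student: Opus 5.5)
We plan to apply Lemma~\ref{lem:endpoint-amplification} with $\mu := m\nu$ and
\[
    H := G(1/m) = I - \frac{1}{m}B,
\]
regarded as an operator on the finite dimensional Hilbert space $\cS_N$ with the $L^2$ inner product. The key algebraic observation is that
\[
    G(\nu) = I - \nu B = (1-m\nu)I + m\nu\left(I - \tfrac{1}{m}B\right) = (1-\mu)I + \mu H.
\]
Since $m>0$ by assumption, this rewriting is valid for all $\nu\geq 0$. Once the hypotheses of Lemma~\ref{lem:endpoint-amplification} are verified for $H$, the identity \eqref{eq:2d-sparse-amplification-radius} follows immediately, because $\max\{1,2\mu-1\}=\max\{1,2m\nu-1\}$.

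First we verify $\norm{H}_2\le 1$. The operator $H$ is the amplification operator of the scheme \eqref{eq:sparse-update} with $\Delta t = h/m$, which corresponds to $r_x = c_1/m$ and $r_y = c_2/m$. At this time step the CFL condition \eqref{eq:sharp-sparse-cfl} holds with equality. The sufficiency part of Theorem~\ref{thm:2d-square} therefore gives $\norm{Hu}\le\norm{u}$ for every $u\in\cS_N$, that is, $\norm{H}_2\le1$.

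Next we exhibit the eigenvalues $\pm1$ of $H$.
\begin{itemize}
    \item For the eigenvalue $1$, take the constant function $u\equiv1\in W_0^x\otimes W_0^y\subset\cS_N$. It satisfies $A_xu = A_yu = 0$, hence $Bu=0$ and $Hu=u$.
    \item For the eigenvalue $-1$, assume without loss of generality $m=c_1$ (the case $m=c_2$ is symmetric). Use the $x$-alternating mode $v_{ij}=(-1)^i$ from the necessity proof of Theorem~\ref{thm:2d-square}. There we showed $v\in\cS_N$, $P_{\cS_N}A_xv=2v$ and $P_{\cS_N}A_yv=0$. Hence $Bv = 2c_1 v = 2mv$ and $Hv = v - 2v = -v$. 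This requires $N\ge1$, so that the alternating mode is nonconstant and $v\neq0$.
\end{itemize}

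No step is a real obstacle; the argument reduces entirely to earlier results. The only point needing care is that the norm bound on $H$ must come from the sufficiency half of Theorem~\ref{thm:2d-square} evaluated exactly at the critical time step. It is also worth recording, as a remark, that the spectral radius and operator norm coincide even though $B$ is not normal. This is exactly the content of Lemma~\ref{lem:endpoint-amplification}: the affine family $(1-\mu)I+\mu H$ attains its triangle-inequality norm bound at the eigenvalue $1$ for $\mu\le1$ and at the eigenvalue $1-2\mu$ for $\mu\ge1$.
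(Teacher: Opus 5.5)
Your proof is correct and follows essentially the same route as the paper. You take $H=G(1/m)$, obtain $\norm{H}_2\le1$ from the sufficiency part of Theorem~\ref{thm:2d-square} at the critical time step, use the constant mode and the alternating mode in the fastest direction for the eigenvalues $\pm1$, and apply Lemma~\ref{lem:endpoint-amplification} with $\mu=m\nu$. (Minor quibbles that do not affect the argument: $N\ge1$ is needed so that $A_xv=2v$ holds under periodic indexing, not to ensure $v\neq0$; and your side remark that $B$ is not normal is unjustified and unnecessary, and it is false when one speed vanishes, since $P_{\cS_N}A_x$ acts as $2^{-q}D^x_{N-q}\otimes I$ on each slice and is therefore normal.)
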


\begin{proof}
Set $H:=G(1/m)$. Theorem \ref{thm:2d-square} gives $\norm{H}_2\leq1$. The constant mode gives the eigenvalue $1$ for $H$. 

Without of loss of generality, assume $c_1\geq c_2>0$. Consider the alternating mode in one direction with magnitude $m$, i.e., $u_{ij}= m(-1)^i$
\begin{equation*}
    H u = u - \frac{1}{m}Bu = u - \frac{1}{m}(c_1P_{\cS_N}A_xu + c_2P_{\cS_N}A_yu) = u - \frac{1}{m}(c_1 \cdot 2u + c_2 \cdot 0) = u - \frac{2c_1}{m}u = -u.
\end{equation*}
This gives the eigenvalue $-1$ for $H$.

Since $G(\nu)=(1-m\nu)I+m\nu H$, Lemma~\ref{lem:endpoint-amplification}, applied with $\mu=m\nu$, completes the proof.
\end{proof}

\begin{remark}[Generalization to high-order SSP RK methods]
    The above stability analysis can be easily extended from Euler forward method to high-order strong-stability-preserving (SSP) Runge-Kutta time discretizations \cite{gottlieb2001strong}. In such methods, the scheme can be written as a convex combination of the Euler forward, and we have the sufficient condition for the stability:
    \begin{equation*}
        \Delta t\le C_{\textrm{SSP}}\frac{h}{\max(c_1,c_2)},
    \end{equation*}
    where $C_{\textrm{SSP}}$ is the SSP coefficient \cite{gottlieb2001strong}. However, the sharpness of this condition is not guaranteed.
\end{remark}

\section{Stability analysis of the sparse grid scheme in higher dimensions}
\label{sec:proof-arbitrary-d}

In this section, we extend the stability analysis of the sparse grid scheme from 2D to higher dimensions with  $d\ge 2$. The proof uses the same one-dimensional leakage identity as in two dimensions. The new point is that the mixed terms between all pairs of coordinate directions must be estimated collectively.

\subsection{The sparse grid scheme in arbitrary dimensions}\label{subsec:d-setting}

Let $d\geq2$ and denote $\boldsymbol p=(p_1,\ldots,p_d)\in\mathbb N_0^d$ with $|\boldsymbol p|_1:=\sum_{\ell=1}^d p_\ell$. Define
\begin{equation*}
    W_{\boldsymbol p}:=\bigotimes_{\ell=1}^d W_{p_\ell}^{(\ell)},
\end{equation*}
where $W_{p_\ell}^{(\ell)}$ denotes the $p_\ell$-th level one-dimensional spaces in the $x_\ell$-direction. The full grid and sparse grid spaces are
\begin{align}
    \cF_N
    :=
    \bigotimes_{\ell=1}^d V_N^{(\ell)}
    =
    \bigoplus_{\boldsymbol p\in\{0,\ldots,N\}^d}W_{\boldsymbol p},\qquad
    \cS_N:=
    \bigoplus_{|\boldsymbol p|_1\leq N}W_{\boldsymbol p}.
    \label{eq:d-sparse-space}
\end{align}

For $1\leq\ell\leq d$, define the full grid difference operator in the $x_\ell$-direction $A_\ell: \cF_N \to \cF_N$ by
\begin{equation*}
    A_\ell := I_1\otimes \cdots \otimes I_{\ell-1}\otimes D_N^{(\ell)} \otimes I_{\ell+1}\otimes\cdots\otimes I_d.    
\end{equation*}
where $I_m$ is the identity operator in the $x_m$-direction. Consider the transport equation with constant coefficients in $d$ dimensions:
\begin{equation}
    u_t+\sum_{\ell=1}^d c_\ell u_{x_\ell}=0
    \qquad\text{on }\Omega = [0, 1]^d,
    \label{eq:d-transport}
\end{equation}
where $\boldsymbol c=(c_1,\ldots,c_d)$, $c_\ell\geq0$. With $h:=2^{-N}$ and $\Delta t\geq0$, the sparse grid forward Euler scheme is
\begin{equation}
    u^{n+1}
    =
    u^n
    -\sum_{\ell=1}^d
    \frac{c_\ell\Delta t}{h}
    P_{\cS_N}A_\ell u^n.
    \label{eq:d-sparse-update}
\end{equation}

\subsection{Multidimensional leakage and mixed terms}
\label{subsec:d-leakage}

The following proposition generalizes Proposition~\ref{prop:2d-leakage} in two dimensions to arbitrary dimensions. The proof is similar to the two-dimensional case, but uses a transverse decomposition of the sparse grid space in the direction of the difference operator, see \eqref{eq:d-transverse-decomposition} below.
\begin{proposition}[Multidimensional projection leakage identity]
\label{prop:d-leakage}
For any $u\in\cS_N$ and $1\leq\ell\leq d$, define \begin{equation}
    X_{\boldsymbol p}^{(\ell)}
    :=
    P_{W_{\boldsymbol p}}A_\ell u,
    \qquad
    \boldsymbol p\in \{0,\ldots,N\}^d,
    \qquad
    1\leq\ell\leq d.
    \label{eq:d-block-components}
\end{equation}
Then, we have
\begin{equation}
    \norm{(I-P_{\cS_N})A_\ell u}^2
    =
    \sum_{|\boldsymbol p|_1\leq N}
    \left(2^{|\boldsymbol p|_1-p_\ell}-1\right)
    \norm{X_{\boldsymbol p}^{(\ell)}}^2.
    \label{eq:d-leakage}
\end{equation}
Moreover,
\begin{equation}
    X_{\boldsymbol p}^{(\ell)}=0
    \qquad\text{whenever }p_\ell=0.
    \label{eq:d-zero-coordinate-block}
\end{equation}
\end{proposition}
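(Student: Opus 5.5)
The plan is to copy the three-step proof of Proposition~\ref{prop:2d-leakage}, treating direction $\ell$ as the ``$x$'' direction and all the other directions together as the ``$y$'' direction. For a multi-index $\boldsymbol p$, write $\boldsymbol p'\in\{0,\ldots,N\}^{d-1}$ for $\boldsymbol p$ with its $\ell$-th entry deleted, and set
\[
    W_{\boldsymbol p'}^{\perp\ell}:=\bigotimes_{k\neq\ell}W_{p_k}^{(k)}.
\]
Grouping the blocks of \eqref{eq:d-sparse-space} by $\boldsymbol p'$ then gives the transverse decomposition
\begin{equation}\label{eq:d-transverse-decomposition}
    \cS_N=\bigoplus_{|\boldsymbol p'|_1\leq N} V_{N-|\boldsymbol p'|_1}^{(\ell)}\otimes W_{\boldsymbol p'}^{\perp\ell},
\end{equation}
because $\bigoplus_{p_\ell=0}^{N-|\boldsymbol p'|_1}W_{p_\ell}^{(\ell)}=V_{N-|\boldsymbol p'|_1}^{(\ell)}$.

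\textbf{Step 1 (projection on a fixed transverse level).} Fix $\boldsymbol p'$ with $|\boldsymbol p'|_1\le N$. I will first prove the analogue of Lemma~\ref{lem:fixed-level-projection}: for $f\in V_N^{(\ell)}\otimes W_{\boldsymbol p'}^{\perp\ell}$,
\[
    P_{\cS_N}f=\bigl(P_{V_{N-|\boldsymbol p'|_1}^{(\ell)}}\otimes I\bigr)f.
\]
The argument is the same as before. Write $P_{\cS_N}=\sum_{|\boldsymbol r|_1\le N}\bigotimes_k P_{W_{r_k}^{(k)}}$. Acting on a product $\phi\,\psi$ with $\psi\in W_{\boldsymbol p'}^{\perp\ell}$, every transverse factor forces $r_k=p_k$ for $k\ne\ell$. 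What remains is the sum over $r_\ell\le N-|\boldsymbol p'|_1$, and this sum is $P_{V_{N-|\boldsymbol p'|_1}^{(\ell)}}$. Linearity then extends the identity to general $f$.

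\textbf{Step 2 (leakage on one transverse piece).} Take $v\in V_{N-|\boldsymbol p'|_1}^{(\ell)}\otimes W_{\boldsymbol p'}^{\perp\ell}$. Expand it in an orthonormal basis $\{\psi_k\}$ of the transverse space as $v=\sum_k\Phi_k\psi_k$ with $\Phi_k\in V_{m}^{(\ell)}$, where $m=N-|\boldsymbol p'|_1$. Since $A_\ell$ acts only in direction $\ell$, we have $A_\ell v=\sum_k (D_N^{(\ell)}\Phi_k)\psi_k$. Apply Proposition~\ref{prop:1d-leakage} to each $\Phi_k$; the exponent is $N-m=|\boldsymbol p'|_1=|\boldsymbol p|_1-p_\ell$. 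Orthonormality of the $\psi_k$ then gives
\[
    \norm{(I-P_{\cS_N})A_\ell v}^2=\bigl(2^{|\boldsymbol p'|_1}-1\bigr)\norm{P_{\cS_N}A_\ell v}^2 .
\]

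\textbf{Step 3 (assembling over the sparse grid).} For $u\in\cS_N$, decompose $u=\sum_{\boldsymbol p'}u^{(\boldsymbol p')}$ according to \eqref{eq:d-transverse-decomposition}. The operator $A_\ell$ preserves each transverse level $W_{\boldsymbol p'}^{\perp\ell}$. Hence the pieces $e_{\boldsymbol p'}:=(I-P_{\cS_N})A_\ell u^{(\boldsymbol p')}$ are mutually orthogonal across different $\boldsymbol p'$, and so are the pieces $g_{\boldsymbol p'}:=P_{\cS_N}A_\ell u^{(\boldsymbol p')}$. Moreover $X^{(\ell)}_{\boldsymbol p}=P_{W_{\boldsymbol p}}A_\ell u^{(\boldsymbol p')}$, and
\[
    g_{\boldsymbol p'}=\sum_{p_\ell\le N-|\boldsymbol p'|_1}X^{(\ell)}_{\boldsymbol p},
    \qquad
    \norm{g_{\boldsymbol p'}}^2=\sum_{p_\ell\le N-|\boldsymbol p'|_1}\norm{X^{(\ell)}_{\boldsymbol p}}^2 .
\]
Summing Step 2 over $\boldsymbol p'$ gives \eqref{eq:d-leakage}.

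\textbf{Zero blocks.} For \eqref{eq:d-zero-coordinate-block}, I will argue as in Proposition~\ref{prop:zero-axis-blocks}. A periodic difference in $x_\ell$ sums to zero along every $x_\ell$-line, so $A_\ell u$ is orthogonal to $W_0^{(\ell)}\otimes(\text{everything})$. Therefore $P_{W_{\boldsymbol p}}A_\ell u=0$ whenever $p_\ell=0$.

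The main obstacle is bookkeeping rather than anything conceptual. Specifically, one must check carefully that the transverse grouping \eqref{eq:d-transverse-decomposition} is exact, and that $P_{\cS_N}$ acts fiberwise on each transverse level. Once Step 1 is established, Steps 2 and 3 are verbatim the two-dimensional argument.
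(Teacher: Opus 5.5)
Your proposal is correct and follows essentially the same route as the paper: the transverse decomposition $\cS_N=\bigoplus_{\boldsymbol q}V^{(\ell)}_{N-|\boldsymbol q|_1}\otimes W^{(-\ell)}_{\boldsymbol q}$, the fiberwise action of $P_{\cS_N}$ as $P_{V^{(\ell)}_{N-|\boldsymbol q|_1}}$, the one-dimensional leakage identity applied coefficientwise in an orthonormal transverse basis, orthogonal assembly over transverse indices, and the zero-mean argument for $p_\ell=0$. The only difference is organizational. You pass directly to a whole transverse fiber after proving a projection lemma, whereas the paper first treats a single Haar block and then the fiber; the paper's single-block step is redundant but harmless.
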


\begin{proof}
We first introduce some notations. For a fixed direction $\ell$ and $\boldsymbol x = (x_j)_{j=1}^d\in\mathbb{R}^d$, write $\boldsymbol {x}_{-\ell}:=(x_j)_{j\neq\ell}\in\mathbb{R}^{d-1}$. 
For a given $\boldsymbol p\in\mathbb N_0^d$, denote $\boldsymbol p_{-\ell}\in\mathbb N_0^{d-1}$ the transverse multi-index obtained by removing the $\ell$-th component from $\boldsymbol p$, i.e., $\boldsymbol p_{-\ell}:=(p_m)_{m\neq\ell}$ and $|\boldsymbol p_{-\ell}|_1 = |\boldsymbol p|_1-p_\ell$.
For a transverse multi-index $\boldsymbol q=(q_j)_{j\neq\ell}\in\mathbb N_0^{d-1}$ with $|\boldsymbol q|_1\leq N$, define
\[
    W_{\boldsymbol q}^{(-\ell)}
    :=
    \bigotimes_{j\neq\ell}W_{q_j}^{(j)},
    \qquad
    |\boldsymbol q|_1:=\sum_{j\neq\ell}q_j,
    \qquad
    m_{\boldsymbol q}:=N-|\boldsymbol q|_1.
\]

We prove \eqref{eq:d-leakage} in three steps, following the argument of Proposition~\ref{prop:2d-leakage}.

\underline{Step 1: Functions in a single block.} We consider the subspace of $\cS_N$ corresponding to a fixed multi-index $\boldsymbol p\in\mathbb{N}_0^d$ with $|\boldsymbol p|_1\leq N$:
\[
    W_{\boldsymbol p}
    =
    W_{p_\ell}^{(\ell)}
    \otimes W_{\boldsymbol{p}_{-\ell}}^{(-\ell)}.
\]
We first consider a separable function
\[
    v(\boldsymbol{x}) = v(x_\ell,\boldsymbol{x}_{-\ell})
    =
    \phi(x_\ell)\psi(\boldsymbol{x}_{-\ell}),
    \qquad
    \phi\in W_{p_\ell}^{(\ell)},
    \quad
    \psi\in W_{\boldsymbol{p}_{-\ell}}^{(-\ell)}.
\]
Then $A_\ell$ acts only in the $x_\ell$-direction, i.e., $A_\ell v=(D_N^{(\ell)}\phi)\psi$. We compute
\[
\begin{aligned}
    P_{\cS_N} A_\ell v
    &=
    P_{\cS_N}((D_N^{(\ell)}\phi)\psi)
    \\
    &=
    \sum_{|\boldsymbol r|_1\leq N}
    P_{W_{r_\ell}^{(\ell)}\otimes W_{\boldsymbol r_{-\ell}}^{(-\ell)}}((D_N^{(\ell)}\phi)\psi)
    \\
    &=
    \sum_{|\boldsymbol r|_1\leq N}
    \left(P_{W_{r_\ell}^{(\ell)}}D_N^{(\ell)}\phi\right)
    \left(P_{W_{\boldsymbol r_{-\ell}}^{(-\ell)}}\psi\right)
    \\
    &=
    \sum_{r_{\ell}=0}^{m_{\boldsymbol q}}
    \left(P_{W_{r_{\ell}}^{(\ell)}}D_N^{(\ell)}\phi\right)\psi
    \\
    &=
    \left(P_{V_{m_{\boldsymbol q}}^{(\ell)}}D_N^{(\ell)}\phi\right)\psi.
\end{aligned}
\]
Here, in the fourth step, we used the fact that $\psi\in W_{\boldsymbol p_{-\ell}}^{(-\ell)}$ and $P_{W_{\boldsymbol r_{-\ell}}^{(-\ell)}}\psi=0$ for $\boldsymbol r_{-\ell}\neq \boldsymbol p_{-\ell}$. In the last step, $V_{m_{\boldsymbol q}}^{(\ell)}=\bigoplus_{k=0}^{m_{\boldsymbol q}}W_k^{(\ell)}$ is the one-dimensional space at level $m_{\boldsymbol q}$ in the variable $x_\ell$. 
Similarly,
\[
    (I-P_{\cS_N})A_\ell v
    =
    \left(
        \left(I_\ell-P_{V_{m_{\boldsymbol q}}^{(\ell)}}\right)
        D_N^{(\ell)}\phi
    \right)\psi.
\]
Because
$\phi\in W_{p_\ell}^{(\ell)}
\subseteq V_{m_{\boldsymbol q}}^{(\ell)}$,
Proposition~\ref{prop:1d-leakage}, applied with
$m=m_{\boldsymbol q}$, gives
\[
    \norm{
        \left(I_\ell-P_{V_{m_{\boldsymbol q}}^{(\ell)}}\right)
        D_N^{(\ell)}\phi
    }_{L^2_{x_\ell}}^2
    =
    \left(2^{|\boldsymbol q|_1}-1\right)
    \norm{
        P_{V_{m_{\boldsymbol q}}^{(\ell)}}
        D_N^{(\ell)}\phi
    }_{L^2_{x_\ell}}^2.
\]
Multiplying both sides by $\norm{\psi}_{L^2_{x_{-\ell}}}^2$ gives
\[
    \norm{(I-P_{\cS_N})A_\ell v}^2
    =
    \left(2^{|\boldsymbol q|_1}-1\right)
    \norm{P_{\cS_N}A_\ell v}^2.
\]

Next, let $v\in W_{\boldsymbol p}$. Choose an orthonormal basis $\{\psi_{\boldsymbol q,\alpha}\}_{\alpha=1}^{d_{\boldsymbol q}}$ of $W_{\boldsymbol q}^{(-\ell)}$ with $d_{\boldsymbol q}:=\dim W_{\boldsymbol q}^{(-\ell)}$, and expand $v$ as
\[
    v(\boldsymbol{x}) = v(x_\ell,\boldsymbol{x}_{-\ell})=\sum_{\alpha=1}^{d_{\boldsymbol q}}
    \phi_\alpha(x_\ell)\psi_{\boldsymbol q,\alpha}(\boldsymbol{x}_{-\ell}),
    \qquad
    \phi_\alpha\in W_{p_\ell}^{(\ell)}.
\]
Applying the projection to each term gives
\[
    P_{\cS_N}A_\ell v
    =
    \sum_{\alpha=1}^{d_{\boldsymbol q}}
    \left(
        P_{V_{m_{\boldsymbol q}}^{(\ell)}}
        D_N^{(\ell)}\phi_\alpha
    \right)\psi_{\boldsymbol q,\alpha},
    \qquad
    (I-P_{\cS_N})A_\ell v
    =
    \sum_{\alpha=1}^{d_{\boldsymbol q}}
    \left(
        \left(I_\ell-P_{V_{m_{\boldsymbol q}}^{(\ell)}}\right)
        D_N^{(\ell)}\phi_\alpha
    \right)\psi_{\boldsymbol q,\alpha}.
\]
Orthonormality eliminates the cross terms when taking squared norms. Consequently,
\[
\begin{aligned}
    \norm{(I-P_{\cS_N})A_\ell v}^2
    &=
    \sum_{\alpha=1}^{d_{\boldsymbol q}}
    \norm{
        \left(I_\ell-P_{V_{m_{\boldsymbol q}}^{(\ell)}}\right)
        D_N^{(\ell)}\phi_\alpha
    }_{L^2_{x_\ell}}^2
    \\
    &=
    \left(2^{|\boldsymbol q|_1}-1\right)
    \sum_{\alpha=1}^{d_{\boldsymbol q}}
    \norm{
        P_{V_{m_{\boldsymbol q}}^{(\ell)}}
        D_N^{(\ell)}\phi_\alpha
    }_{L^2_{x_\ell}}^2
    \\
    &=
    \left(2^{|\boldsymbol q|_1}-1\right)
    \norm{P_{\cS_N}A_\ell v}^2.
\end{aligned}
\]
This identity holds on any single block $W_{\boldsymbol p}$ and its factor depends only on the transverse index $\boldsymbol q = \boldsymbol p_{-\ell}$.

\underline{Step 2: Functions with a fixed transverse index.}
We next extend the identity to
\[
    v\in
    V_{m_{\boldsymbol q}}^{(\ell)}
    \otimes W_{\boldsymbol q}^{(-\ell)}
    =
    \bigoplus_{k=0}^{m_{\boldsymbol q}}
    W_k^{(\ell)}\otimes W_{\boldsymbol q}^{(-\ell)}.
\]
Write
\[
    v=\sum_{k=0}^{m_{\boldsymbol q}}v_k,
    \qquad
    v_k\in W_k^{(\ell)}\otimes W_{\boldsymbol q}^{(-\ell)}.
\]
Using the same orthonormal basis $\{\psi_{\boldsymbol q,\alpha}\}_{\alpha=1}^{d_{\boldsymbol q}}$, expand
\[
    v_k(x_\ell,x_{-\ell})
    =
    \sum_{\alpha=1}^{d_{\boldsymbol q}}
    \phi_{k,\alpha}(x_\ell)
    \psi_{\boldsymbol q,\alpha}(x_{-\ell}),
    \qquad
    \phi_{k,\alpha}\in W_k^{(\ell)}.
\]
Collecting the coefficients of each
$\psi_{\boldsymbol q,\alpha}$ gives
\[
    v(x_\ell,x_{-\ell})
    =
    \sum_{\alpha=1}^{d_{\boldsymbol q}}
    \Phi_\alpha(x_\ell)\psi_{\boldsymbol q,\alpha}(x_{-\ell}),
    \qquad
    \Phi_\alpha
    :=
    \sum_{k=0}^{m_{\boldsymbol q}}\phi_{k,\alpha}
    \in V_{m_{\boldsymbol q}}^{(\ell)}.
\]
Thus,
\[
\begin{aligned}
    \norm{(I-P_{\cS_N})A_\ell v}^2
    &=
    \sum_{\alpha=1}^{d_{\boldsymbol q}}
    \norm{
        \left(I_\ell-P_{V_{m_{\boldsymbol q}}^{(\ell)}}\right)
        D_N^{(\ell)}\Phi_\alpha
    }_{L^2_{x_\ell}}^2
    \\
    &=
    \left(2^{|\boldsymbol q|_1}-1\right)
    \sum_{\alpha=1}^{d_{\boldsymbol q}}
    \norm{
        P_{V_{m_{\boldsymbol q}}^{(\ell)}}
        D_N^{(\ell)}\Phi_\alpha
    }_{L^2_{x_\ell}}^2
    \\
    &=
    \left(2^{|\boldsymbol q|_1}-1\right)
    \norm{P_{\cS_N}A_\ell v}^2.
\end{aligned}
\]
The same identity therefore holds for $V_{m_{\boldsymbol q}}^{(\ell)} \otimes W_{\boldsymbol q}^{(-\ell)}$.

\underline{Step 3: Functions in the sparse grid space.}
When $\boldsymbol p_{-\ell}=\boldsymbol q$ is fixed, the constraint $|\boldsymbol p|_1\leq N$ is equivalent to $0\leq p_\ell\leq m_{\boldsymbol q}$. Decompose the sparse grid space $\cS_N$ into
\begin{equation*}
\begin{aligned}
    \cS_N
    =
    \bigoplus_{|\boldsymbol q|_1\leq N}
    \left(
        \bigoplus_{k=0}^{m_{\boldsymbol q}}W_k^{(\ell)}
    \right)\otimes W_{\boldsymbol q}^{(-\ell)}
    =
    \bigoplus_{|\boldsymbol q|_1\leq N}
    V_{m_{\boldsymbol q}}^{(\ell)}
    \otimes W_{\boldsymbol q}^{(-\ell)}.
\end{aligned}
\label{eq:d-transverse-decomposition}
\end{equation*}
Accordingly, for $u\in\cS_N$, write
\[
    u=\sum_{|\boldsymbol q|_1\leq N}u^{(\boldsymbol q)},
    \qquad
    u^{(\boldsymbol q)}
    \in
    V_{m_{\boldsymbol q}}^{(\ell)}
    \otimes W_{\boldsymbol q}^{(-\ell)}.
\]
Define
\[
    g_{\boldsymbol q}
    :=P_{\cS_N}A_\ell u^{(\boldsymbol q)},
    \qquad
    e_{\boldsymbol q}
    :=(I-P_{\cS_N})A_\ell u^{(\boldsymbol q)}.
\]
By linearity,
\[
    P_{\cS_N}A_\ell u
    =
    \sum_{|\boldsymbol q|_1\leq N}g_{\boldsymbol q},
    \qquad
    (I-P_{\cS_N})A_\ell u
    =
    \sum_{|\boldsymbol q|_1\leq N}e_{\boldsymbol q}.
\]
Step~2 gives
\begin{equation*}
    \norm{e_{\boldsymbol q}}^2
    =
    \left(2^{|\boldsymbol q|_1}-1\right)
    \norm{g_{\boldsymbol q}}^2.
\end{equation*}

Since $A_\ell$ leaves the transverse factors unchanged,
and the projection formulas in Step~1 preserve their indices,
\[
    g_{\boldsymbol q}
    \in
    V_{m_{\boldsymbol q}}^{(\ell)}
    \otimes W_{\boldsymbol q}^{(-\ell)},
    \qquad
    e_{\boldsymbol q}
    \in
    V_N^{(\ell)}\otimes W_{\boldsymbol q}^{(-\ell)}.
\]
Distinct transverse spaces are orthogonal, so
$e_{\boldsymbol q}\perp e_{\boldsymbol r}$ whenever
$\boldsymbol q\neq\boldsymbol r$.
Therefore,
\begin{equation}
    \norm{(I-P_{\cS_N})A_\ell u}^2
    =
    \sum_{|\boldsymbol q|_1\leq N}\norm{e_{\boldsymbol q}}^2
    =
    \sum_{|\boldsymbol q|_1\leq N}
    \left(2^{|\boldsymbol q|_1}-1\right)
    \norm{g_{\boldsymbol q}}^2.
    \label{eq:orthogonal-sum-d-proof}
\end{equation}

It remains to express $g_{\boldsymbol q}$ in terms of
$X_{\boldsymbol p}^{(\ell)}$.
For $\boldsymbol p_{-\ell}=\boldsymbol q$, we have
\[
    X_{\boldsymbol p}^{(\ell)}
    =
    P_{W_{\boldsymbol p}}A_\ell u
    =
    \sum_{|\boldsymbol r|_1\leq N}
    P_{W_{\boldsymbol p}}A_\ell u^{(\boldsymbol r)}
    =
    P_{W_{\boldsymbol p}}A_\ell u^{(\boldsymbol q)}.
\]
Indeed,
$A_\ell u^{(\boldsymbol r)}
\in V_N^{(\ell)}\otimes W_{\boldsymbol r}^{(-\ell)}$,
which is orthogonal to $W_{\boldsymbol p}$ when
$\boldsymbol r\neq\boldsymbol q$.
Using the orthogonal decomposition of $\cS_N$ once more,
\[
    g_{\boldsymbol q}
    =
    P_{\cS_N}A_\ell u^{(\boldsymbol q)}
    =
    \sum_{|\boldsymbol p|_1\leq N}
    P_{W_{\boldsymbol p}}A_\ell u^{(\boldsymbol q)}
    =
    \sum_{\substack{
        |\boldsymbol p|_1\leq N\\
        \boldsymbol p_{-\ell}=\boldsymbol q}}
    X_{\boldsymbol p}^{(\ell)}.
\]
The Haar blocks in this sum are mutually orthogonal.
Hence,
\[
    \norm{g_{\boldsymbol q}}^2
    =
    \sum_{\substack{
        |\boldsymbol p|_1\leq N\\
        \boldsymbol p_{-\ell}=\boldsymbol q}}
    \norm{X_{\boldsymbol p}^{(\ell)}}^2.
\]
Substituting this identity into
\eqref{eq:orthogonal-sum-d-proof} gives
\[
    \norm{(I-P_{\cS_N})A_\ell u}^2
    =
    \sum_{|\boldsymbol q|_1\leq N}
    \left(2^{|\boldsymbol q|_1}-1\right)
    \sum_{\substack{
        |\boldsymbol p|_1\leq N\\
        \boldsymbol p_{-\ell}=\boldsymbol q}}
    \norm{X_{\boldsymbol p}^{(\ell)}}^2
    =
    \sum_{|\boldsymbol p|_1\leq N}
    \left(2^{|\boldsymbol p|_1-p_\ell}-1\right)
    \norm{X_{\boldsymbol p}^{(\ell)}}^2,
\]
where the last equality uses
$|\boldsymbol q|_1=|\boldsymbol p|_1-p_\ell$.
This proves \eqref{eq:d-leakage}.

Finally, with $h=2^{-N}$ and periodic extension,
\[
    (A_\ell u)(x_\ell,x_{-\ell})
    =
    u(x_\ell,x_{-\ell})
    -
    u(x_\ell-h,x_{-\ell}).
\]
Periodicity implies
\[
    \int_0^1
    (A_\ell u)(x_\ell,x_{-\ell})\,dx_\ell=0
\]
for almost every $x_{-\ell}$.
If $p_\ell=0$, every $w\in W_{\boldsymbol p}$
is independent of $x_\ell$, since
$W_0^{(\ell)}=\operatorname{span}\{1\}$.
Integrating first in $x_\ell$ therefore gives
$\ip{A_\ell u}{w}=0$.
Hence
$X_{\boldsymbol p}^{(\ell)}
=P_{W_{\boldsymbol p}}A_\ell u=0$,
which proves \eqref{eq:d-zero-coordinate-block}.
\end{proof}

\begin{proposition}[Mixed term estimate in arbitrary dimensions]
\label{prop:d-mixed-terms}
Let $u\in\cS_N$ and $0\leq r_\ell\leq1$ for
$1\leq\ell\leq d$. Then
\begin{equation}
    2\sum_{1\leq\ell<m\leq d}r_\ell r_m
    \ip{P_{\cS_N}A_\ell u}
       {P_{\cS_N}A_m u}
    \leq
    \sum_{\ell=1}^d r_\ell
    \norm{(I-P_{\cS_N})A_\ell u}^2.
    \label{eq:d-mixed-leakage}
\end{equation}
\end{proposition}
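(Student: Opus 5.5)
We expand the left-hand side of \eqref{eq:d-mixed-leakage} blockwise, exactly as in the two-dimensional proof of Theorem~\ref{thm:2d-square}, and then compare the result block by block with the right-hand side via Proposition~\ref{prop:d-leakage}. Since $P_{\cS_N}A_\ell u=\sum_{|\boldsymbol p|_1\leq N}X_{\boldsymbol p}^{(\ell)}$ and the blocks $W_{\boldsymbol p}$ are mutually orthogonal,
\[
    2\sum_{\ell<m}r_\ell r_m\ip{P_{\cS_N}A_\ell u}{P_{\cS_N}A_m u}
    =
    \sum_{|\boldsymbol p|_1\leq N}\;2\sum_{\ell<m}r_\ell r_m\ip{X_{\boldsymbol p}^{(\ell)}}{X_{\boldsymbol p}^{(m)}}.
\]
By \eqref{eq:d-leakage} the right-hand side of \eqref{eq:d-mixed-leakage} equals $\sum_{|\boldsymbol p|_1\leq N}\sum_\ell r_\ell(2^{|\boldsymbol p|_1-p_\ell}-1)\norm{X_{\boldsymbol p}^{(\ell)}}^2$. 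It therefore suffices to prove, for each fixed $\boldsymbol p$, the local inequality
\[
    2\sum_{\ell<m}r_\ell r_m\ip{X_{\boldsymbol p}^{(\ell)}}{X_{\boldsymbol p}^{(m)}}
    \leq
    \sum_{\ell}r_\ell\bigl(2^{|\boldsymbol p|_1-p_\ell}-1\bigr)\norm{X_{\boldsymbol p}^{(\ell)}}^2.
\]

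Fix $\boldsymbol p$ and let $S:=\{\ell: p_\ell\geq1\}$ be its support, with $k:=|S|$. By \eqref{eq:d-zero-coordinate-block}, only indices $\ell,m\in S$ contribute on either side. For $\ell\in S$, the exponent satisfies $|\boldsymbol p|_1-p_\ell\geq\sum_{j\in S,\,j\neq\ell}p_j\geq k-1$, so the weight obeys $2^{|\boldsymbol p|_1-p_\ell}-1\geq 2^{k-1}-1\geq k-1$. On the left, we apply Cauchy--Schwarz and Young to each pair:
\[
    2r_\ell r_m\ip{X^{(\ell)}}{X^{(m)}}\leq r_\ell r_m\bigl(\norm{X^{(\ell)}}^2+\norm{X^{(m)}}^2\bigr).
\]
Summing over pairs in $S$, the coefficient of $\norm{X^{(\ell)}}^2$ becomes $r_\ell\sum_{m\in S,\,m\neq\ell}r_m\leq r_\ell(k-1)$, using $r_m\leq1$. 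This is dominated by $r_\ell(2^{k-1}-1)$, which yields the local inequality. Summing it over $\boldsymbol p$ gives \eqref{eq:d-mixed-leakage}.

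The main point to get right is the counting step: the number of partners of $\ell$ is $k-1$, and this must be bounded by the leakage weight $2^{|\boldsymbol p|_1-p_\ell}-1$. This uses two facts together. First, blocks with $p_\ell=0$ vanish, which removes the partners outside $S$. Second, each remaining partner $m\in S$ contributes at least one level to the transverse index of $\ell$. The elementary inequality $2^{k-1}-1\geq k-1$ then closes the argument. Everything else is the orthogonality bookkeeping already established in Proposition~\ref{prop:d-leakage}.
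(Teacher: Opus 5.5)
Your proposal is correct and follows the paper's proof essentially step for step: the blockwise orthogonal decomposition, restriction to the support via \eqref{eq:d-zero-coordinate-block}, Young's inequality on each pair, bounding the partner count $\sum_{m\in S,\,m\neq\ell}r_m\leq k-1$ with $k=\#S$, and comparison with the leakage weights from Proposition~\ref{prop:d-leakage}. The only cosmetic difference is the order of the elementary inequality. You write $2^{|\boldsymbol p|_1-p_\ell}-1\geq 2^{k-1}-1\geq k-1$, whereas the paper writes $k-1\leq|\boldsymbol p|_1-p_\ell\leq 2^{|\boldsymbol p|_1-p_\ell}-1$.
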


\begin{proof}
For $|\boldsymbol p|_1\leq N$, define
\[
    J(\boldsymbol p) := \{\ell:p_\ell\geq1\}.
\]

First, with $X_{\boldsymbol p}^{(\ell)}$ defined in \eqref{eq:d-block-components}, we separate the contributions of different blocks. By orthogonality,
\[
    \ip{P_{\cS_N}A_\ell u}{P_{\cS_N}A_m u}
    =
    \sum_{|\boldsymbol p|_1\leq N}
    \ip{X_{\boldsymbol p}^{(\ell)}}
       {X_{\boldsymbol p}^{(m)}}.
\]
Moreover, \eqref{eq:d-zero-coordinate-block} implies that only directions in $J(\boldsymbol p)$ contribute on $W_{\boldsymbol p}$.
Thus,
\begin{equation*}
\begin{aligned}
    2\sum_{1\leq\ell<m\leq d}
    r_\ell r_m
    \ip{P_{\cS_N}A_\ell u}{P_{\cS_N}A_m u}
    &=
    \sum_{|\boldsymbol p|_1\leq N}
    2\sum_{\substack{\ell<m\\
                     \ell,m\in J(\boldsymbol p)}}
    r_\ell r_m
    \ip{X_{\boldsymbol p}^{(\ell)}}
       {X_{\boldsymbol p}^{(m)}} \\
    &\le
    \sum_{|\boldsymbol p|_1\leq N}
    \sum_{\substack{\ell<m\\
                     \ell,m\in J(\boldsymbol p)}}
    r_\ell r_m
    \left(
        \norm{X_{\boldsymbol p}^{(\ell)}}^2
        +
        \norm{X_{\boldsymbol p}^{(m)}}^2
    \right) \\
    &=
    \sum_{|\boldsymbol p|_1\leq N}
    \sum_{\ell\in J(\boldsymbol p)}
    r_\ell
    \left(
        \sum_{\substack{m\in J(\boldsymbol p)\\m\neq\ell}}
        r_m
    \right)
    \norm{X_{\boldsymbol p}^{(\ell)}}^2.
\end{aligned}    
\end{equation*}
Define $s_{\boldsymbol p}:=\#J(\boldsymbol p)$. There are $s_{\boldsymbol p}-1$ terms in the inner sum,
and each satisfies $0\leq r_m\leq1$.
Therefore,
\begin{equation*}
    2\sum_{1\leq\ell<m\leq d}r_\ell r_m
    \ip{P_{\cS_N}A_\ell u}
       {P_{\cS_N}A_m u} 
       \le 
       \sum_{|\boldsymbol p|_1\leq N}
    \sum_{\ell\in J(\boldsymbol p)}
    r_\ell(s_{\boldsymbol p}-1)
    \norm{X_{\boldsymbol p}^{(\ell)}}^2.
\end{equation*}

By the definition of $J(\boldsymbol p)$, we have $p_\ell\geq1$ for any $\ell\in J(\boldsymbol p)$. Hence, for any $\ell\in J(\boldsymbol p)$,
\[
    s_{\boldsymbol p}-1
    \leq \sum_{m\neq\ell}p_m
    =|\boldsymbol p|_1-p_\ell\leq
    2^{|\boldsymbol p|_1-p_\ell}-1.
\]
The last inequality follows from $k\leq2^k-1$ for any integer $k\geq0$. Using \eqref{eq:d-leakage}, we conclude that
\begin{equation*}
    2\sum_{1\leq\ell<m\leq d}
    r_\ell r_m
    \ip{P_{\cS_N}A_\ell u}{P_{\cS_N}A_m u}    
    \leq
    \sum_{\ell=1}^d r_\ell
    \sum_{|\boldsymbol p|_1\leq N}
    \left(2^{|\boldsymbol p|_1-p_\ell}-1\right)
    \norm{X_{\boldsymbol p}^{(\ell)}}^2
    =
    \sum_{\ell=1}^d
    r_\ell\norm{(I-P_{\cS_N})A_\ell u}^2.
\end{equation*}
\end{proof}

\subsection{Sharp CFL condition in arbitrary dimensions}
\label{subsec:d-sharp-cfl}

\begin{theorem}[Sharp CFL condition in arbitrary dimensions]
\label{thm:d-hypercube}
Let $d\geq2$, $N\geq1$, $c_\ell\geq0$, and $\max_{1\leq\ell\leq d}c_\ell>0$. The sparse grid scheme \eqref{eq:d-sparse-update} is stable for every $u^n\in\cS_N$,
\[
    \norm{u^{n+1}}\leq\norm{u^n},
    \qquad
    n\geq0,
\]
if and only if
\begin{equation}
    \Delta t
    \leq
    \frac{h}{\displaystyle\max_{1\leq\ell\leq d}c_\ell}.
    \label{eq:d-sharp-sparse-cfl}
\end{equation}
\end{theorem}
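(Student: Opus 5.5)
The plan mirrors the proof of Theorem~\ref{thm:2d-square}, with Proposition~\ref{prop:d-mixed-terms} replacing the pairwise estimate used in two dimensions. Set $r_\ell:=c_\ell\Delta t/h$. Condition \eqref{eq:d-sharp-sparse-cfl} is then equivalent to $0\leq r_\ell\leq1$ for every $\ell$.

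\emph{Necessity.} Fix a direction $\ell$ and take the alternating mode $v(\boldsymbol x)=(-1)^{i_\ell}$, where $i_\ell$ is the cell index in $x_\ell$. This mode lies in $V_N^{(\ell)}\otimes\bigotimes_{m\neq\ell}W_0^{(m)}\subset\cS_N$, since that block has $|\boldsymbol p|_1=p_\ell\leq N$. It satisfies $A_\ell v=2v$ and $A_mv=0$ for $m\neq\ell$, so the projection leaves everything unchanged. Hence $u^{n+1}=(1-2r_\ell)v$, and stability forces $|1-2r_\ell|\leq1$, that is, $r_\ell\leq1$. Since $\ell$ is arbitrary, this gives $\max_\ell r_\ell\leq1$, which is \eqref{eq:d-sharp-sparse-cfl}.

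\emph{Sufficiency.} Assume $0\leq r_\ell\leq1$ and expand $\norm{u^{n+1}}^2$ from \eqref{eq:d-sparse-update}. This yields $\norm{u^n}^2$, the diagonal terms $r_\ell^2\norm{P_{\cS_N}A_\ell u^n}^2$, the linear terms $-2r_\ell\ip{u^n}{P_{\cS_N}A_\ell u^n}$, and the mixed terms $2\sum_{\ell<m}r_\ell r_m\ip{P_{\cS_N}A_\ell u^n}{P_{\cS_N}A_mu^n}$.

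For each linear term, use $u^n\in\cS_N$ together with Lemma~\ref{lem:D-energy}, applied to $D_N^{(\ell)}$ on every line in direction $\ell$ (tensorized). This gives
\[
2\ip{u^n}{P_{\cS_N}A_\ell u^n}=\norm{P_{\cS_N}A_\ell u^n}^2+\norm{(I-P_{\cS_N})A_\ell u^n}^2,
\]
exactly as in \eqref{eq:x-projected-energy}. Substituting, we get
\[
\norm{u^{n+1}}^2=\norm{u^n}^2-\sum_\ell r_\ell(1-r_\ell)\norm{P_{\cS_N}A_\ell u^n}^2-\sum_\ell r_\ell\norm{(I-P_{\cS_N})A_\ell u^n}^2+(\text{mixed}).
\]

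The mixed sum is bounded above by $\sum_\ell r_\ell\norm{(I-P_{\cS_N})A_\ell u^n}^2$ by Proposition~\ref{prop:d-mixed-terms}, whose hypothesis $0\leq r_\ell\leq1$ holds. It therefore cancels against the leakage terms. What remains is $\norm{u^n}^2$ minus nonnegative quantities, so $\norm{u^{n+1}}\leq\norm{u^n}$.

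The only substantive step is the mixed-term absorption, and it is already supplied by Proposition~\ref{prop:d-mixed-terms}. The remaining point to check is that the energy identity from Lemma~\ref{lem:D-energy} extends to the tensor operator $A_\ell$. This holds by applying the lemma along each $x_\ell$-line and summing, with no new difficulty.
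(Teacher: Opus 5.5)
Your proposal is correct and follows the paper's proof essentially step for step. Necessity comes from the finest-level alternating mode in each direction $\ell$. Sufficiency comes from the energy identity of Lemma~\ref{lem:D-energy}, applied along $x_\ell$-lines, combined with the mixed-term absorption of Proposition~\ref{prop:d-mixed-terms}.
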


\begin{proof}
Set 
\begin{equation*}
    r_\ell:=\frac{c_\ell\Delta t}{h},
\end{equation*}
for $1\leq\ell\leq d$. We first prove necessity. Fix a direction $\ell$ and let $v_{\boldsymbol i}:=(-1)^{i_\ell}$ be the alternating mode on the full grid indexed by $\boldsymbol i=(i_1,\ldots,i_d)$. Then
\[
    v\in
    W_N^{(\ell)}\otimes
    \bigotimes_{m\neq\ell}W_0^{(m)}
    \subset\cS_N,
\]
and
\[
    P_{\cS_N}A_\ell v=2v,
    \qquad
    P_{\cS_N}A_m v=0
    \quad(m\neq\ell).
\]
Taking $u^n=v$ in \eqref{eq:d-sparse-update} gives $u^{n+1}=(1-2r_\ell)v$. Hence the stability condition requires $|1-2r_\ell|\leq1$, or equivalently, $0\leq r_\ell\leq1$. Applying this argument in every direction shows that
\eqref{eq:d-sharp-sparse-cfl} is necessary.

We next prove sufficiency. Assume \eqref{eq:d-sharp-sparse-cfl} holds true, so $0\leq r_\ell\leq1$ for every $1\le\ell\le d$. Since $u^n\in\cS_N$, the one-dimensional periodic difference identity in Lemma~\ref{lem:D-energy}
,
applied in the $x_\ell$-direction, gives
\begin{align}
    2\ip{u^n}{P_{\cS_N}A_\ell u^n}
    &=
    2\ip{u^n}{A_\ell u^n}
    =
    \norm{A_\ell u^n}^2
    =
    \norm{P_{\cS_N}A_\ell u^n}^2
    +
    \norm{(I-P_{\cS_N})A_\ell u^n}^2.
    \label{eq:d-projected-energy}
\end{align}
Taking the norm on both sides of the scheme \eqref{eq:d-sparse-update}, we obtain
\begin{align*}
    \norm{u^{n+1}}^2
    &=
    \norm{
        u^n-
        \sum_{\ell=1}^d
        r_\ell P_{\cS_N}A_\ell u^n
    }^2
    \\
    &=
    \norm{u^n}^2 + \sum_{\ell=1}^{d}
    r_\ell^2 \norm{P_{\cS_N}A_\ell u^n}^2
    -2\sum_{\ell=1}^d r_\ell \ip{u^n}{P_{\cS_N}A_\ell u^n} +2\sum_{1\leq\ell<m\leq d}
    r_\ell r_m
    \ip{P_{\cS_N}A_\ell u^n}{P_{\cS_N}A_m u^n}
    \\
    &=
    \norm{u^n}^2
    -\sum_{\ell=1}^d
    r_\ell(1-r_\ell)
    \norm{P_{\cS_N}A_\ell u^n}^2    
    -\sum_{\ell=1}^d
    r_\ell
    \norm{(I-P_{\cS_N})A_\ell u^n}^2
    +2\sum_{1\leq\ell<m\leq d}
    r_\ell r_m
    \ip{P_{\cS_N}A_\ell u^n}
       {P_{\cS_N}A_m u^n}
    \\
    &\leq
    \norm{u^n}^2
    -\sum_{\ell=1}^d
    r_\ell(1-r_\ell)
    \norm{P_{\cS_N}A_\ell u^n}^2 \\
    &\leq\norm{u^n}^2.
\end{align*}
Here we use \eqref{eq:d-projected-energy} and Lemma~\ref{prop:d-mixed-terms}.
This proves sufficiency.
\end{proof}

The following result extends the sparse grid result in Corollary~\ref{cor:2d-amplification-radius} to arbitrary dimensions. The proof is similar to the two-dimensional case, so we omit the details.
\begin{corollary}
\label{cor:d-amplification-radius}
Under the assumptions of Theorem~\ref{thm:d-hypercube}, set
\[
    m:=\max_{1\leq\ell\leq d}c_\ell,
    \qquad
    B_{\boldsymbol c}
    :=
    \sum_{\ell=1}^d c_\ell P_{\cS_N}A_\ell,
    \qquad
    G_{\boldsymbol c}(\nu)
    :=
    I-\nu B_{\boldsymbol c},
\]
where \(\nu=\Delta t/h\). Then, for every \(\nu\geq0\),
\begin{equation}
    \rho\bigl(G_{\boldsymbol c}(\nu)\bigr)
    =
    \norm{G_{\boldsymbol c}(\nu)}_2
    =
    \max\{1,2m\nu-1\}.
    \label{eq:d-exact-amplification-radius}
\end{equation}
\end{corollary}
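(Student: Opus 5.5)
The plan is to mirror the proof of Corollary~\ref{cor:2d-amplification-radius} and reduce everything to Lemma~\ref{lem:endpoint-amplification}. Set $H:=G_{\boldsymbol c}(1/m)$, which is a linear operator on the finite dimensional Hilbert space $\cS_N$. Since $G_{\boldsymbol c}$ is affine in $\nu$ with $G_{\boldsymbol c}(0)=I$, we have
\[
    G_{\boldsymbol c}(\nu)=(1-m\nu)I+m\nu H
\]
for every $\nu\geq0$. It therefore suffices to verify the three hypotheses of Lemma~\ref{lem:endpoint-amplification} for $H$ and then apply the lemma with $\mu=m\nu\geq0$. That lemma gives $\rho=\norm{\cdot}_2=\max\{1,2m\nu-1\}$, which is exactly \eqref{eq:d-exact-amplification-radius}.

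\textbf{Step 1: $\norm{H}_2\leq1$.} Taking $\Delta t=h/m$, i.e.\ $\nu=1/m$, satisfies \eqref{eq:d-sharp-sparse-cfl} with equality. By the sufficiency part of Theorem~\ref{thm:d-hypercube}, $\norm{Hu}\leq\norm{u}$ for every $u\in\cS_N$, which is the claim.

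\textbf{Step 2: $1$ is an eigenvalue.} Take the constant function $u\equiv1\in W_{\boldsymbol 0}\subset\cS_N$. Then $A_\ell u=0$ for every $\ell$, so $Bu=0$ and $Hu=u$.

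\textbf{Step 3: $-1$ is an eigenvalue.} Choose a direction $\ell^*$ with $c_{\ell^*}=m$, and take the alternating mode $v_{\boldsymbol i}=(-1)^{i_{\ell^*}}$ from the necessity argument of Theorem~\ref{thm:d-hypercube}. That argument already shows $v\in\cS_N$, $P_{\cS_N}A_{\ell^*}v=2v$, and $P_{\cS_N}A_\ell v=0$ for $\ell\neq\ell^*$. Hence $B_{\boldsymbol c}v=2mv$ and $Hv=v-\tfrac1m\cdot 2mv=-v$.

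No step is a genuine obstacle, since all the analytic content sits in Theorem~\ref{thm:d-hypercube}. The only points to check are bookkeeping ones. First, $m>0$ by hypothesis, so $H$ is well defined. Second, the identity $G_{\boldsymbol c}(\nu)=(1-m\nu)I+m\nu H$ holds for all $\nu\geq0$, including $m\nu>1$, because it is an algebraic identity rather than a stability statement. Third, when several directions attain the maximum, any one of them may be chosen for $\ell^*$; and directions with $c_\ell=0$ are harmless, since they contribute nothing to $B_{\boldsymbol c}$.
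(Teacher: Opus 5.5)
Your proposal is correct and is the route the paper intends. The paper omits this proof and refers to the argument of Corollary~\ref{cor:2d-amplification-radius}: set $H=G(1/m)$, obtain $\norm{H}_2\leq1$ from the sufficiency part of the CFL theorem, obtain the eigenvalue $1$ from the constant mode and the eigenvalue $-1$ from the alternating mode in a maximal-speed direction, then apply Lemma~\ref{lem:endpoint-amplification} with $\mu=m\nu$. Your proposal carries this out faithfully in $d$ dimensions, and your bookkeeping points (that $m>0$, ties in the choice of $\ell^*$, and the affine identity holding for all $\nu\geq0$) are accurate.
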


\section{CFL conditions on general downward closed index sets}
\label{sec:downward-closed}

The sparse grid space $\cS_N$ defined in
\eqref{eq:d-sparse-space} belongs to a broader family of hierarchical spaces. For example, anisotropic sparse grids use weighted level sets \cite{bungartz2004sparse,griebel2000optimized}, while adaptive sparse grid DG methods adjust the index set during time evolution \cite{guo2017adaptive}. These constructions motivate an extension of the stability analysis in Section~\ref{sec:proof-arbitrary-d} beyond the standard total level constraint in \eqref{eq:d-sparse-space}.

In this section, we consider a general downward closed index set and relate its geometry to the admissible time step. The key observation is that, along each coordinate direction, downward closedness produces complete one-dimensional slices. The compression and leakage identities from the preceding sections therefore remain applicable, with weights determined by the maximum level on each slice. We use these identities to derive an explicit sufficient CFL condition in Theorem~\ref{thm:dc-sufficient}, and then identify a geometric criterion for its sharpness in Theorem~\ref{thm:dc-sharpness}. Full tensor product grids and the standard sparse grid satisfy this criterion, recovering their respective sharp CFL conditions from a single formula. An L-shaped example illustrates that the sufficient condition need not be sharp for a general downward closed set.

\subsection{Hierarchical spaces and the scheme}
\label{subsec:dc-setting}

\begin{definition}[Downward closed set]
    A finite index set $\Lambda\subset\mathbb N_0^d$ is called \emph{downward closed} if $\boldsymbol p\in\Lambda$ and $\boldsymbol0\leq\boldsymbol q\leq\boldsymbol p$ componentwise imply $\boldsymbol q\in\Lambda$.
\end{definition}

Throughout this section, let $\Lambda\subset\mathbb N_0^d$ be a finite, nonempty, and downward closed set. The associated hierarchical space is
\[
    \cS_\Lambda
    :=\bigoplus_{\boldsymbol p\in\Lambda}W_{\boldsymbol p}.
\]
Its maximum directional levels and finest mesh sizes are
\[
    N_\ell:=\max_{\boldsymbol p\in\Lambda}p_\ell,
    \qquad
    h_\ell:=2^{-N_\ell},
    \qquad 1\leq\ell\leq d.
\]
With $\boldsymbol N=(N_1,\ldots,N_d)$, define the full grid space and the directional difference operators by
\[
    \cF_{\boldsymbol N}
    :=\bigotimes_{m=1}^d V_{N_m}^{(m)},
    \qquad
    A_\ell
    :=I_1\otimes\cdots\otimes I_{\ell-1}
      \otimes D_{N_\ell}^{(\ell)}
      \otimes I_{\ell+1}\otimes\cdots\otimes I_d,
\]
where $I_m$ is the identity on $V_{N_m}^{(m)}$. Then $\cS_\Lambda\subseteq\cF_{\boldsymbol N}$, and the piecewise constant upwind DG scheme with forward Euler time stepping for \eqref{eq:d-transport} takes the form
\begin{equation}
    u^{n+1}
    =
    u^n-\sum_{\ell=1}^d
    r_\ell P_{\cS_\Lambda}A_\ell u^n,
    \qquad
    r_\ell:=\frac{c_\ell\Delta t}{h_\ell}.
    \label{eq:dc-scheme}
\end{equation}

To describe how the projection acts in a fixed direction, we group hierarchical blocks by their transverse indices. For $\boldsymbol p\in\mathbb N_0^d$, write $\boldsymbol p_{-\ell}:=(p_m)_{m\neq\ell}$. Conversely, for $k\in\mathbb N_0$ and $\boldsymbol q=(q_m)_{m\neq\ell}\in\mathbb N_0^{d-1}$, let $(k;\boldsymbol q)_\ell$ denote the multi-index with $\ell$th component $k$ and remaining components $q_m$. We also write
\[
    W_{\boldsymbol q}^{(-\ell)}
    :=\bigotimes_{m\neq\ell}W_{q_m}^{(m)}.
\]

The transverse indices occurring in $\Lambda$ form the set
\[
    \mathcal Q_\ell
    :=
    \bigl\{
        \boldsymbol q\in\mathbb N_0^{d-1}:
        (0;\boldsymbol q)_\ell\in\Lambda
    \bigr\}.
\]
For $\boldsymbol q\in\mathcal Q_\ell$, define the maximum
level on the corresponding slice by
\[
    K_\ell(\boldsymbol q)
    :=
    \max\bigl\{
        k\in\mathbb N_0:
        (k;\boldsymbol q)_\ell\in\Lambda
    \bigr\}.
\]
Downward closedness ensures that the levels on this slice are exactly $0,\ldots,K_\ell(\boldsymbol q)$. Thus, the corresponding space is $V_{K_\ell(\boldsymbol q)}^{(\ell)} \otimes W_{\boldsymbol q}^{(-\ell)}$. This complete one dimensional hierarchy will allow us to apply the leakage identity separately on each slice, as in the standard sparse grid case.

As an example, on the standard sparse grid space \eqref{eq:d-sparse-space}, the transverse index set and the maximum slice level are
\[
    \mathcal Q_\ell
    =
    \bigl\{
        \boldsymbol q\in\mathbb N_0^{d-1}:
        |\boldsymbol q|_1\leq N
    \bigr\},
    \qquad
    K_\ell(\boldsymbol q)=N-|\boldsymbol q|_1.
\]
For the full grid index set $\Lambda=\prod_{\ell=1}^d\{0,\ldots,N_\ell\}$, every admissible slice instead has $K_\ell(\boldsymbol q)=N_\ell$.
For a general downward closed set, the slice levels may vary with $\boldsymbol q$, but they always satisfy
$K_\ell(\boldsymbol0)=N_\ell$ and the following
monotonicity property.

\begin{lemma}[Monotonicity of slice levels]
\label{lem:dc-monotone}
Let $\boldsymbol q\leq\boldsymbol q'$ componentwise with
$\boldsymbol q'\in\mathcal Q_\ell$. Then
$\boldsymbol q\in\mathcal Q_\ell$ and
$K_\ell(\boldsymbol q)\geq K_\ell(\boldsymbol q')$.
\end{lemma}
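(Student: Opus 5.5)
The plan is to argue directly from the definition of downward closedness, in two steps.

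First, membership. Since $\boldsymbol q'\in\mathcal Q_\ell$, we have $(0;\boldsymbol q')_\ell\in\Lambda$. Because $\boldsymbol q\leq\boldsymbol q'$ componentwise, the multi-index $(0;\boldsymbol q)_\ell$ satisfies $\boldsymbol 0\leq(0;\boldsymbol q)_\ell\leq(0;\boldsymbol q')_\ell$ componentwise. Downward closedness of $\Lambda$ then gives $(0;\boldsymbol q)_\ell\in\Lambda$, that is, $\boldsymbol q\in\mathcal Q_\ell$. In particular $K_\ell(\boldsymbol q)$ is well defined: the defining set is nonempty (it contains $k=0$) and finite because $\Lambda$ is finite.

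Second, the inequality. Set $k':=K_\ell(\boldsymbol q')$. By definition of the maximum, $(k';\boldsymbol q')_\ell\in\Lambda$. Again $\boldsymbol 0\leq(k';\boldsymbol q)_\ell\leq(k';\boldsymbol q')_\ell$ componentwise, since the $\ell$th components agree and the transverse components satisfy $q_m\leq q'_m$. Downward closedness yields $(k';\boldsymbol q)_\ell\in\Lambda$. Hence $k'$ belongs to the set whose maximum defines $K_\ell(\boldsymbol q)$, so $K_\ell(\boldsymbol q)\geq k'=K_\ell(\boldsymbol q')$.

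There is no genuine obstacle. The only care needed is to check that the $\ell$th coordinate is held fixed when comparing the two multi-indices, so that the componentwise order reduces to the order on the transverse parts. The same argument also justifies the earlier claim that each slice contains exactly the levels $0,\ldots,K_\ell(\boldsymbol q)$: one compares $(k;\boldsymbol q)_\ell\leq(K_\ell(\boldsymbol q);\boldsymbol q)_\ell$ for $k\leq K_\ell(\boldsymbol q)$.
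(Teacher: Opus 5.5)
Your proof is correct and takes the same approach as the paper: both apply downward closedness to $(k;\boldsymbol q)_\ell\leq(k;\boldsymbol q')_\ell$ with $k=K_\ell(\boldsymbol q')$. The only difference is cosmetic. The paper obtains $\boldsymbol q\in\mathcal Q_\ell$ from $(k;\boldsymbol q)_\ell\in\Lambda$ in the same step, whereas you prove membership separately by comparing $(0;\boldsymbol q)_\ell\leq(0;\boldsymbol q')_\ell$.
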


\begin{proof}
Set $k:=K_\ell(\boldsymbol q')$. Since
$(k;\boldsymbol q')_\ell\in\Lambda$ and
$(k;\boldsymbol q)_\ell\leq(k;\boldsymbol q')_\ell$,
downward closedness gives $(k;\boldsymbol q)_\ell\in\Lambda$.
It follows that $(0;\boldsymbol q)_\ell\in\Lambda$ and
$K_\ell(\boldsymbol q)\geq k$, proving both claims.
\end{proof}

\subsection{Projection leakage identities on general downward closed sets}
\label{subsec:dc-leakage}

The preceding slice structure determines exactly how much energy a directional difference loses under projection onto $\cS_\Lambda$. In the standard sparse grid, this loss is controlled by the transverse total level. For a general downward closed set, the relevant quantity is the difference between $N_\ell$ and the slice level $K_\ell(\boldsymbol p_{-\ell})$. The following proposition extends Proposition~\ref{prop:d-leakage}.

\begin{proposition}[Projection leakage formulas on downward closed sets]
\label{prop:dc-leakage}
Let $\Lambda\subset\mathbb N_0^d$ be a finite, nonempty, and downward closed set. For $u\in\cS_\Lambda$ and $1\leq\ell\leq d$, define
\[
    X_{\boldsymbol p}^{(\ell)}
    :=
    P_{W_{\boldsymbol p}}A_\ell u,
    \qquad
    \boldsymbol p\in\Lambda.
\]
Then
\begin{equation}
    \norm{(I-P_{\cS_\Lambda})A_\ell u}^2
    =
    \sum_{\boldsymbol p\in\Lambda}
    \left(2^{N_\ell-K_\ell(\boldsymbol p_{-\ell})}-1\right)
    \norm{X_{\boldsymbol p}^{(\ell)}}^2.
    \label{eq:dc-leakage}
\end{equation}
Moreover, $X_{\boldsymbol p}^{(\ell)}=0$
whenever $p_\ell=0$.
\end{proposition}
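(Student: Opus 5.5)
The plan is to repeat the three-step argument of Proposition~\ref{prop:d-leakage}, with the total-level constraint replaced by the slice structure of $\Lambda$. Fix a direction $\ell$. Downward closedness, together with the definition of $K_\ell$, gives the transverse decomposition
\[
    \cS_\Lambda=\bigoplus_{\boldsymbol q\in\mathcal Q_\ell}V_{K_\ell(\boldsymbol q)}^{(\ell)}\otimes W_{\boldsymbol q}^{(-\ell)},
\]
and the ambient space is $\cF_{\boldsymbol N}=V_{N_\ell}^{(\ell)}\otimes\bigotimes_{m\neq\ell}V_{N_m}^{(m)}$.

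\emph{Projection formula.} The first step is the analogue of Lemma~\ref{lem:fixed-level-projection}. For $\phi\in V_{N_\ell}^{(\ell)}$ and $\psi\in W_{\boldsymbol q}^{(-\ell)}$ with $\boldsymbol q\in\mathcal Q_\ell$, I will show
\[
    P_{\cS_\Lambda}(\phi\psi)=\bigl(P_{V_{K_\ell(\boldsymbol q)}^{(\ell)}}\phi\bigr)\psi .
\]
Write $P_{\cS_\Lambda}=\sum_{\boldsymbol p\in\Lambda}P_{W_{p_\ell}^{(\ell)}}\otimes P_{W_{\boldsymbol p_{-\ell}}^{(-\ell)}}$. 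Only the terms with $\boldsymbol p_{-\ell}=\boldsymbol q$ survive, and these are exactly $p_\ell=0,\ldots,K_\ell(\boldsymbol q)$; here downward closedness is what guarantees the slice has no gaps.

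\emph{Leakage on a slice.} Take $v\in V_{K_\ell(\boldsymbol q)}^{(\ell)}\otimes W_{\boldsymbol q}^{(-\ell)}$. Expand $v$ in an orthonormal basis $\{\psi_\alpha\}$ of $W_{\boldsymbol q}^{(-\ell)}$ with coefficients $\Phi_\alpha\in V_{K_\ell(\boldsymbol q)}^{(\ell)}$. Apply Proposition~\ref{prop:1d-leakage} with fine level $N_\ell$ and coarse level $m=K_\ell(\boldsymbol q)\le N_\ell$; this requires $D_{N_\ell}$, which is exactly the operator in $A_\ell$. Orthonormality of the $\psi_\alpha$ then gives
\[
    \norm{(I-P_{\cS_\Lambda})A_\ell v}^2=\bigl(2^{N_\ell-K_\ell(\boldsymbol q)}-1\bigr)\norm{P_{\cS_\Lambda}A_\ell v}^2 .
\]

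\emph{Summing over slices.} Decompose $u=\sum_{\boldsymbol q}u^{(\boldsymbol q)}$ according to the transverse decomposition. The pieces $A_\ell u^{(\boldsymbol q)}$ lie in $V_{N_\ell}^{(\ell)}\otimes W_{\boldsymbol q}^{(-\ell)}$, and these spaces are mutually orthogonal for distinct $\boldsymbol q$. Hence both the discarded parts $e_{\boldsymbol q}$ and the retained parts $g_{\boldsymbol q}$ are mutually orthogonal across slices. Exactly as before, $g_{\boldsymbol q}=\sum_{k\le K_\ell(\boldsymbol q)}X^{(\ell)}_{(k;\boldsymbol q)_\ell}$ and $X^{(\ell)}_{\boldsymbol p}=P_{W_{\boldsymbol p}}A_\ell u^{(\boldsymbol p_{-\ell})}$. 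Summing the slice identities therefore yields \eqref{eq:dc-leakage}, with weight $2^{N_\ell-K_\ell(\boldsymbol p_{-\ell})}-1$.

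\emph{Vanishing blocks.} The claim $X^{(\ell)}_{\boldsymbol p}=0$ for $p_\ell=0$ follows verbatim from the zero-mean argument: a periodic difference in $x_\ell$ integrates to zero in $x_\ell$, and $W_0^{(\ell)}$ consists of constants in $x_\ell$.

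\emph{Main obstacle.} The only genuinely new point is the projection formula and the fact that the slice is complete, $\{k:(k;\boldsymbol q)_\ell\in\Lambda\}=\{0,\ldots,K_\ell(\boldsymbol q)\}$. This is what lets $P_{\cS_\Lambda}$ act on each slice as a single one-dimensional coarse projection onto a $V$-space, and Proposition~\ref{prop:1d-leakage} needs precisely such a $V$-space. I will state this completeness explicitly using downward closedness. I will also check that the transverse factors $W_{q_m}^{(m)}$ with $q_m\le N_m$ sit inside $V_{N_m}^{(m)}$, so that everything stays in $\cF_{\boldsymbol N}$.
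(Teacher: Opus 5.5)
Your proposal is correct and follows essentially the same route as the paper: the slice-wise projection formula justified by downward closedness, Proposition~\ref{prop:1d-leakage} with fine level $N_\ell$ and coarse level $K_\ell(\boldsymbol q)$, orthogonal summation over transverse slices, and the zero-mean argument for $p_\ell=0$. The only difference is that you treat the full slice directly rather than first handling a single Haar block, which is harmless because the block case is a special case of the slice case.
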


\begin{proof}
Fix a direction $\ell$ and write $\boldsymbol{x}_{-\ell}:=(x_j)_{j\neq\ell}$. For $\boldsymbol q\in\mathcal Q_\ell$, set $m_{\boldsymbol q}:=K_\ell(\boldsymbol q)$. By downward closedness,
\[
    \bigl\{
        k\in\mathbb N_0:(k;\boldsymbol q)_\ell\in\Lambda
    \bigr\}
    =
    \{0,1,\ldots,m_{\boldsymbol q}\},
    \qquad
    0\leq m_{\boldsymbol q}\leq N_\ell.
\]
Following Proposition~\ref{prop:d-leakage}, we first start with a single Haar block, then combine the blocks on a fixed transverse slice, and finally sum over the orthogonal slices.

\underline{Step 1: Functions in a single Haar block.} Fix $\boldsymbol p\in\Lambda$ and set $\boldsymbol q:=\boldsymbol p_{-\ell}$. Then $\boldsymbol q\in\mathcal Q_\ell$, $p_\ell\leq m_{\boldsymbol q}$, and
\[
    W_{\boldsymbol p}
    =
    W_{p_\ell}^{(\ell)}
    \otimes W_{\boldsymbol q}^{(-\ell)}.
\]
We first consider a separable function
\[
    v(\boldsymbol{x}) = v(x_\ell,\boldsymbol{x}_{-\ell})
    =
    \phi(x_\ell)\psi(\boldsymbol{x}_{-\ell}),
    \qquad
    \phi\in W_{p_\ell}^{(\ell)},
    \quad
    \psi\in W_{\boldsymbol q}^{(-\ell)}.
\]
Since $A_\ell$ acts only in the $x_\ell$-direction, $A_\ell v=(D_{N_\ell}^{(\ell)}\phi)\psi$. For any $f\in V_{N_\ell}^{(\ell)}$, the orthogonal decomposition of $\cS_\Lambda$ gives
\[
\begin{aligned}
    P_{\cS_\Lambda}(f\psi)
    =
    \sum_{\boldsymbol r\in\Lambda}
    \left(P_{W_{r_\ell}^{(\ell)}}f\right)
    \left(
        P_{W_{\boldsymbol r_{-\ell}}^{(-\ell)}}\psi
    \right)
    =
    \sum_{k=0}^{m_{\boldsymbol q}}
    \left(P_{W_k^{(\ell)}}f\right)\psi
    =
    \left(P_{V_{m_{\boldsymbol q}}^{(\ell)}}f\right)\psi.
\end{aligned}
\]
Here, the orthogonality eliminates every term with $\boldsymbol r_{-\ell}\neq\boldsymbol q$, and downward closedness ensures that the remaining levels are precisely $0,\ldots,m_{\boldsymbol q}$. Applying this formula to $f=D_{N_\ell}^{(\ell)}\phi$
yields
\[
\begin{aligned}
    P_{\cS_\Lambda}A_\ell v
    =
    \left(
        P_{V_{m_{\boldsymbol q}}^{(\ell)}}
        D_{N_\ell}^{(\ell)}\phi
    \right)\psi,
    \qquad
    (I-P_{\cS_\Lambda})A_\ell v
    =
    \left(
        \left(I_\ell-P_{V_{m_{\boldsymbol q}}^{(\ell)}}\right)
        D_{N_\ell}^{(\ell)}\phi
    \right)\psi.
\end{aligned}
\]
Because
$\phi\in W_{p_\ell}^{(\ell)}
\subseteq V_{m_{\boldsymbol q}}^{(\ell)}$,
Proposition~\ref{prop:1d-leakage}, applied with
$N=N_\ell$ and $m=m_{\boldsymbol q}$, gives
\[
    \norm{
        \left(I_\ell-P_{V_{m_{\boldsymbol q}}^{(\ell)}}\right)
        D_{N_\ell}^{(\ell)}\phi
    }_{L^2_{x_\ell}}^2
    =
    \left(2^{N_\ell-m_{\boldsymbol q}}-1\right)
    \norm{
        P_{V_{m_{\boldsymbol q}}^{(\ell)}}
        D_{N_\ell}^{(\ell)}\phi
    }_{L^2_{x_\ell}}^2.
\]
Multiplying both sides by
$\norm{\psi}_{L^2_{x_{-\ell}}}^2$ gives
\[
    \norm{(I-P_{\cS_\Lambda})A_\ell v}^2
    =
    \left(2^{N_\ell-m_{\boldsymbol q}}-1\right)
    \norm{P_{\cS_\Lambda}A_\ell v}^2.
\]

Next, let $v$ be an arbitrary function in
$W_{\boldsymbol p}$.
Choose an orthonormal basis
$\{\psi_{\boldsymbol q,\alpha}\}_{\alpha=1}^{d_{\boldsymbol q}}$
of $W_{\boldsymbol q}^{(-\ell)}$,
where $d_{\boldsymbol q}:=\dim W_{\boldsymbol q}^{(-\ell)}$,
and write
\[
    v(x_\ell,x_{-\ell})
    =
    \sum_{\alpha=1}^{d_{\boldsymbol q}}
    \phi_\alpha(x_\ell)\psi_{\boldsymbol q,\alpha}(x_{-\ell}),
    \qquad
    \phi_\alpha\in W_{p_\ell}^{(\ell)}.
\]
Applying the preceding projection formulas for each term gives
\[
\begin{aligned}
    P_{\cS_\Lambda}A_\ell v
    =
    \sum_{\alpha=1}^{d_{\boldsymbol q}}
    \left(
        P_{V_{m_{\boldsymbol q}}^{(\ell)}}
        D_{N_\ell}^{(\ell)}\phi_\alpha
    \right)\psi_{\boldsymbol q,\alpha},
    \qquad
    (I-P_{\cS_\Lambda})A_\ell v
    =
    \sum_{\alpha=1}^{d_{\boldsymbol q}}
    \left(
        \left(I_\ell-P_{V_{m_{\boldsymbol q}}^{(\ell)}}\right)
        D_{N_\ell}^{(\ell)}\phi_\alpha
    \right)\psi_{\boldsymbol q,\alpha}.
\end{aligned}
\]
Orthonormality eliminates the cross terms when taking squared norms. Therefore,
\[
\begin{aligned}
    \norm{(I-P_{\cS_\Lambda})A_\ell v}^2
    &=
    \sum_{\alpha=1}^{d_{\boldsymbol q}}
    \norm{
        \left(I_\ell-P_{V_{m_{\boldsymbol q}}^{(\ell)}}\right)
        D_{N_\ell}^{(\ell)}\phi_\alpha
    }_{L^2_{x_\ell}}^2
    \\
    &=
    \left(2^{N_\ell-m_{\boldsymbol q}}-1\right)
    \sum_{\alpha=1}^{d_{\boldsymbol q}}
    \norm{
        P_{V_{m_{\boldsymbol q}}^{(\ell)}}
        D_{N_\ell}^{(\ell)}\phi_\alpha
    }_{L^2_{x_\ell}}^2
    \\
    &=
    \left(2^{N_\ell-m_{\boldsymbol q}}-1\right)
    \norm{P_{\cS_\Lambda}A_\ell v}^2.
\end{aligned}
\]

\underline{Step 2: Functions with a fixed transverse index.}
In this step, we consider the entire slice
\[
    v\in
    V_{m_{\boldsymbol q}}^{(\ell)}
    \otimes W_{\boldsymbol q}^{(-\ell)}
    =
    \bigoplus_{k=0}^{m_{\boldsymbol q}}
    W_k^{(\ell)}\otimes W_{\boldsymbol q}^{(-\ell)}.
\]
Write
\[
    v=\sum_{k=0}^{m_{\boldsymbol q}}v_k,
    \qquad
    v_k\in W_k^{(\ell)}\otimes W_{\boldsymbol q}^{(-\ell)}.
\]
With the same orthonormal basis for every $k$, we expand
\[
    v_k(x_\ell,x_{-\ell})
    =
    \sum_{\alpha=1}^{d_{\boldsymbol q}}
    \phi_{k,\alpha}(x_\ell)
    \psi_{\boldsymbol q,\alpha}(x_{-\ell}),
    \qquad
    \phi_{k,\alpha}\in W_k^{(\ell)}.
\]
Collecting the coefficients of each
$\psi_{\boldsymbol q,\alpha}$ gives
\[
    v(x_\ell,x_{-\ell})
    =
    \sum_{\alpha=1}^{d_{\boldsymbol q}}
    \Phi_\alpha(x_\ell)\psi_{\boldsymbol q,\alpha}(x_{-\ell}),
    \qquad
    \Phi_\alpha
    :=
    \sum_{k=0}^{m_{\boldsymbol q}}\phi_{k,\alpha}
    \in V_{m_{\boldsymbol q}}^{(\ell)}.
\]
Proposition~\ref{prop:1d-leakage} applies to each
$\Phi_\alpha$ with $N=N_\ell$ and $m=m_{\boldsymbol q}$.
Repeating the calculation in Step~1 yields
\[
\begin{aligned}
    \norm{(I-P_{\cS_\Lambda})A_\ell v}^2
    &=
    \sum_{\alpha=1}^{d_{\boldsymbol q}}
    \norm{
        \left(I_\ell-P_{V_{m_{\boldsymbol q}}^{(\ell)}}\right)
        D_{N_\ell}^{(\ell)}\Phi_\alpha
    }_{L^2_{x_\ell}}^2
    \\
    &=
    \left(2^{N_\ell-m_{\boldsymbol q}}-1\right)
    \sum_{\alpha=1}^{d_{\boldsymbol q}}
    \norm{
        P_{V_{m_{\boldsymbol q}}^{(\ell)}}
        D_{N_\ell}^{(\ell)}\Phi_\alpha
    }_{L^2_{x_\ell}}^2
    \\
    &=
    \left(2^{N_\ell-m_{\boldsymbol q}}-1\right)
    \norm{P_{\cS_\Lambda}A_\ell v}^2.
\end{aligned}
\]
Thus, the identity holds on the entire slice
$V_{m_{\boldsymbol q}}^{(\ell)}
\otimes W_{\boldsymbol q}^{(-\ell)}$.

\underline{Step 3: Functions in the hierarchical space.} The remaining step is to combine the transverse slices. Grouping the Haar blocks gives
\begin{equation}
    \cS_\Lambda
    =
    \bigoplus_{\boldsymbol q\in\mathcal Q_\ell}
    \left(
        \bigoplus_{k=0}^{m_{\boldsymbol q}}W_k^{(\ell)}
    \right)\otimes W_{\boldsymbol q}^{(-\ell)}
    =
    \bigoplus_{\boldsymbol q\in\mathcal Q_\ell}
    V_{m_{\boldsymbol q}}^{(\ell)}
    \otimes W_{\boldsymbol q}^{(-\ell)}.\label{eq:dc-slice}
\end{equation}
For $u\in\cS_\Lambda$, write
\[
    u=\sum_{\boldsymbol q\in\mathcal Q_\ell}u^{(\boldsymbol q)},
    \qquad
    u^{(\boldsymbol q)}
    \in
    V_{m_{\boldsymbol q}}^{(\ell)}
    \otimes W_{\boldsymbol q}^{(-\ell)},
\]
and define
\[
    g_{\boldsymbol q}
    :=P_{\cS_\Lambda}A_\ell u^{(\boldsymbol q)},
    \qquad
    e_{\boldsymbol q}
    :=(I-P_{\cS_\Lambda})A_\ell u^{(\boldsymbol q)}.
\]
By Step~2,
\[
    \norm{e_{\boldsymbol q}}^2
    =
    \left(2^{N_\ell-m_{\boldsymbol q}}-1\right)
    \norm{g_{\boldsymbol q}}^2.
\]

Since $A_\ell$ leaves the transverse factors unchanged,
and the projection formulas in Step~1 preserve their indices,
\[
    g_{\boldsymbol q}
    \in
    V_{m_{\boldsymbol q}}^{(\ell)}
    \otimes W_{\boldsymbol q}^{(-\ell)},
    \qquad
    e_{\boldsymbol q}
    \in
    V_{N_\ell}^{(\ell)}
    \otimes W_{\boldsymbol q}^{(-\ell)}.
\]
Thus, $e_{\boldsymbol q}\perp e_{\boldsymbol r}$
whenever $\boldsymbol q\neq\boldsymbol r$.
Together with
\[
    (I-P_{\cS_\Lambda})A_\ell u
    =
    \sum_{\boldsymbol q\in\mathcal Q_\ell}e_{\boldsymbol q},
\]
this gives
\[
\begin{aligned}
    \norm{(I-P_{\cS_\Lambda})A_\ell u}^2
    =
    \sum_{\boldsymbol q\in\mathcal Q_\ell}
    \norm{e_{\boldsymbol q}}^2
    =
    \sum_{\boldsymbol q\in\mathcal Q_\ell}
    \left(2^{N_\ell-m_{\boldsymbol q}}-1\right)
    \norm{g_{\boldsymbol q}}^2.
\end{aligned}
\]

We next express $g_{\boldsymbol q}$ in terms of
$X_{\boldsymbol p}^{(\ell)}$.
For $\boldsymbol p\in\Lambda$ with
$\boldsymbol p_{-\ell}=\boldsymbol q$,
\[
\begin{aligned}
    X_{\boldsymbol p}^{(\ell)}
    =
    P_{W_{\boldsymbol p}}A_\ell u
    =
    \sum_{\boldsymbol r\in\mathcal Q_\ell}
    P_{W_{\boldsymbol p}}A_\ell u^{(\boldsymbol r)}
    =
    P_{W_{\boldsymbol p}}A_\ell u^{(\boldsymbol q)}.
\end{aligned}
\]
The last equality follows because $A_\ell u^{(\boldsymbol r)}\in V_{N_\ell}^{(\ell)}\otimes W_{\boldsymbol r}^{(-\ell)}$ is orthogonal to $W_{\boldsymbol p}$ when $\boldsymbol r\neq\boldsymbol q$. Consequently,
\[
\begin{aligned}
    g_{\boldsymbol q}
    =
    P_{\cS_\Lambda}A_\ell u^{(\boldsymbol q)}
    =
    \sum_{\boldsymbol p\in\Lambda}
    P_{W_{\boldsymbol p}}A_\ell u^{(\boldsymbol q)}
    =
    \sum_{\substack{
        \boldsymbol p\in\Lambda\\
        \boldsymbol p_{-\ell}=\boldsymbol q}}
    X_{\boldsymbol p}^{(\ell)}.
\end{aligned}
\]
By orthogonality of the Haar blocks,
\[
    \norm{g_{\boldsymbol q}}^2
    =
    \sum_{\substack{
        \boldsymbol p\in\Lambda\\
        \boldsymbol p_{-\ell}=\boldsymbol q}}
    \norm{X_{\boldsymbol p}^{(\ell)}}^2.
\]
Substituting this expression into the preceding energy identity yields
\[
\begin{aligned}
    \norm{(I-P_{\cS_\Lambda})A_\ell u}^2
    =
    \sum_{\boldsymbol q\in\mathcal Q_\ell}
    \left(2^{N_\ell-m_{\boldsymbol q}}-1\right)
    \sum_{\substack{
        \boldsymbol p\in\Lambda\\
        \boldsymbol p_{-\ell}=\boldsymbol q}}
    \norm{X_{\boldsymbol p}^{(\ell)}}^2
    =
    \sum_{\boldsymbol p\in\Lambda}
    \left(2^{N_\ell-K_\ell(\boldsymbol p_{-\ell})}-1\right)
    \norm{X_{\boldsymbol p}^{(\ell)}}^2.
\end{aligned}
\]
This proves \eqref{eq:dc-leakage}.

It remains to verify the zero mean property used below
in the CFL estimate. With $h_\ell=2^{-N_\ell}$ and
periodic extension,
\[
    (A_\ell u)(x_\ell,x_{-\ell})
    =
    u(x_\ell,x_{-\ell})
    -
    u(x_\ell-h_\ell,x_{-\ell}).
\]
Periodicity implies
\[
    \int_0^1
    (A_\ell u)(x_\ell,x_{-\ell})\,dx_\ell=0
\]
for almost every $x_{-\ell}$.
If $p_\ell=0$, every $w\in W_{\boldsymbol p}$
is independent of $x_\ell$, since
$W_0^{(\ell)}=\operatorname{span}\{1\}$.
Integrating first in $x_\ell$ gives
$\ip{A_\ell u}{w}=0$.
Hence
$X_{\boldsymbol p}^{(\ell)}
=P_{W_{\boldsymbol p}}A_\ell u=0$,
which proves the final claim.
\end{proof}

\subsection{A sufficient CFL condition}
\label{subsec:dc-sufficient}

The leakage identity expresses the discarded energy in terms of the
projected directional differences on each Haar block. We now use this
identity to control the mixed terms in the forward Euler energy estimate.
The directions that can contribute to a given block are determined by
its positive hierarchical levels. This observation leads to the following
definitions.

\begin{definition}[Support]
\label{def:dc-support}
For $\boldsymbol p\in\Lambda$, define its \emph{support} by
\[
    J(\boldsymbol p)
    :=\{\ell\in\{1,\ldots,d\}:p_\ell\geq1\}.
\]
\end{definition}
By Proposition~\ref{prop:dc-leakage}, $X_{\boldsymbol p}^{(\ell)}=0$ for $\ell\notin J(\boldsymbol p)$. Thus, only directions in $J(\boldsymbol p)$ enter the energy estimate on $W_{\boldsymbol p}$.

\begin{example}
\label{ex:dc-supports}
Consider the downward closed set
\[
    \Lambda
    =\{(0,0),(1,0),(2,0),(0,1),(0,2)\}
    \subseteq\mathbb N_0^2.
\]
Its supports are
\[
\begin{aligned}
    J((0,0))=\emptyset,\quad
    J((1,0))=J((2,0))=\{1\},\quad
    J((0,1))=J((0,2))=\{2\}.
\end{aligned}
\]
Although each coordinate reaches level $2$, no index has positive
levels in both coordinates. Consequently, no Haar block receives
nonzero projected differences from both directions.
\end{example}

For a general index set, several directions may contribute to the same
block. To account for all such combinations, we collect the nonempty
supports that occur in $\Lambda$.

\begin{definition}[Admissible support]
\label{def:dc-admissible-support}
A nonempty set $S\subseteq\{1,\ldots,d\}$ is called an
\emph{admissible support} if $J(\boldsymbol p)=S$ for some
$\boldsymbol p\in\Lambda$. The collection of admissible supports is
\[
    \mathscr S(\Lambda)
    :=\{J(\boldsymbol p):
        \boldsymbol p\in\Lambda\setminus\{\boldsymbol0\}\}.
\]
\end{definition}

Downward closedness provides a simple way to check admissibility.
For a nonempty $S\subseteq\{1,\ldots,d\}$, define the indicator index
\[
    \boldsymbol1_S:=\sum_{\ell\in S}\boldsymbol e_\ell,
\]
with $\boldsymbol{e}_\ell$ the $\ell$-th standard unit vector in $\mathbb R^d$.
This is the componentwise smallest index with support $S$.
If $J(\boldsymbol p)=S$, then $\boldsymbol1_S\leq\boldsymbol p$,
so $\boldsymbol1_S\in\Lambda$. Conversely,
$J(\boldsymbol1_S)=S$. Hence, for every nonempty $S$,
\[
    S\in\mathscr S(\Lambda)
    \quad\Longleftrightarrow\quad
    \boldsymbol1_S\in\Lambda.
\]

The support identifies the directions involved in a block, while the
slice levels determine the strength of the corresponding directional
terms. For a fixed support $S$, Lemma~\ref{lem:dc-monotone} shows that
these slice levels are largest at the minimal index $\boldsymbol1_S$.
We therefore define the CFL constant using the slices through this index.

\begin{definition}[Directional levels and CFL constant]
\label{def:dc-cfl-constant}
For $S\in\mathscr S(\Lambda)$ and $\ell\in S$, define
\begin{equation}
    \kappa_\ell(S)
    :=K_\ell\bigl((\boldsymbol1_S)_{-\ell}\bigr),\qquad
    \mathcal C_\Lambda(S)
    :=\sum_{\ell\in S}c_\ell\,2^{\kappa_\ell(S)},\qquad
    \mathcal C_\Lambda(\boldsymbol c)
    :=\max_{S\in\mathscr S(\Lambda)}\mathcal C_\Lambda(S).
    \label{eq:dc-constant}
\end{equation}
If $\Lambda=\{\boldsymbol0\}$, set
$\mathcal C_\Lambda(\boldsymbol c)=0$.
\end{definition}

Here $\kappa_\ell(S)$ is the largest allowed level in direction $\ell$
when the other directions in $S$ are fixed at level $1$ and those
outside $S$ are fixed at level $0$. Equivalently,
\[
    \kappa_\ell(S)
    =\max\left\{
        k\in\mathbb N:
        k\boldsymbol e_\ell
        +\sum_{m\in S\setminus\{\ell\}}\boldsymbol e_m
        \in\Lambda
    \right\}.
\]
Thus, $c_\ell2^{\kappa_\ell(S)}$ is the directional speed divided by
the finest mesh size on this slice, and $\mathcal C_\Lambda(S)$ sums
these contributions over the directions in $S$.

The maximum over supports also has an equivalent formulation in terms of individual Haar blocks. If $\Lambda\neq\{\boldsymbol0\}$, then
\begin{equation}
    \mathcal C_\Lambda(\boldsymbol c)
    =\max_{\boldsymbol p\in\Lambda\setminus\{\boldsymbol0\}}
      \sum_{\ell\in J(\boldsymbol p)}
      c_\ell\,2^{K_\ell(\boldsymbol p_{-\ell})}.
    \label{eq:dc-block-constant}
\end{equation}
Indeed, for $J(\boldsymbol p)=S$, monotonicity gives
$K_\ell(\boldsymbol p_{-\ell})\leq\kappa_\ell(S)$ for every
$\ell\in S$, with equality in all these directions when
$\boldsymbol p=\boldsymbol1_S$. Therefore, passing from blocks to
supports does not change the maximum. The support formulation is
convenient for evaluating the constant and for stating the sharpness
criterion below.

\begin{example}[CFL constant for the preceding index set]
For the set in Example~\ref{ex:dc-supports}, the only admissible
supports are $\{1\}$ and $\{2\}$. Since
\[
    \kappa_1(\{1\})=K_1(0)=2,
    \qquad
    \kappa_2(\{2\})=K_2(0)=2,
\]
we obtain
\[
    \mathcal C_\Lambda(\{1\})=4c_1,
    \qquad
    \mathcal C_\Lambda(\{2\})=4c_2,
    \qquad
    \mathcal C_\Lambda(\boldsymbol c)=4\max\{c_1,c_2\}.
\]
\end{example}

The next example shows how the constant changes when both directions
can occur in the same block.

\begin{example}[A full grid]
Let $\Lambda=\{0,1,2\}^2$. In addition to the singleton supports,
$\{1,2\}$ is now admissible, and
\[
\begin{aligned}
    \kappa_1(\{1,2\})
    &=\max\{k:(k,1)\in\Lambda\}=2,\\
    \kappa_2(\{1,2\})
    &=\max\{k:(1,k)\in\Lambda\}=2.
\end{aligned}
\]
Consequently,
\[
    \mathcal C_\Lambda(\{1,2\})=4(c_1+c_2).
\]
Since $c_1,c_2\geq0$, this is at least as large as either singleton
contribution. Hence,
\[
    \mathcal C_\Lambda(\boldsymbol c)=4(c_1+c_2).
\]
\end{example}

We now show that the reciprocal of $\mathcal C_\Lambda(\boldsymbol c)$ provides an admissible time step.
\begin{theorem}[Sufficient CFL condition on downward closed sets]
\label{thm:dc-sufficient}
If
\begin{equation}
    \Delta t\,\mathcal C_\Lambda(\boldsymbol c)\leq1,
    \label{eq:dc-cfl}
\end{equation}
then the scheme \eqref{eq:dc-scheme} is $L^2$-contractive:
\[
    \norm{u^{n+1}}\leq\norm{u^n}
    \qquad\text{for every }u^n\in\cS_\Lambda.
\]
\end{theorem}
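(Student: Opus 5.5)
The plan is to run the same forward Euler energy argument as in Theorem~\ref{thm:d-hypercube}, but to control the projected update block by block with a \emph{weighted} Cauchy--Schwarz inequality. The weights are the slice quantities $c_\ell 2^{K_\ell(\boldsymbol p_{-\ell})}$, so that the block formula \eqref{eq:dc-block-constant} for $\mathcal C_\Lambda(\boldsymbol c)$ appears directly. This replaces the cruder pairwise AM--GM bound used in Proposition~\ref{prop:d-mixed-terms}.

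\textbf{Step 1: energy identity.} Write $u=u^n$ and $r_\ell=c_\ell\Delta t\,2^{N_\ell}$. Expanding the norm of \eqref{eq:dc-scheme} gives
\[
\norm{u^{n+1}}^2=\norm{u}^2-2\sum_\ell r_\ell\ip{u}{P_{\cS_\Lambda}A_\ell u}+\norm{\sum_\ell r_\ell P_{\cS_\Lambda}A_\ell u}^2 .
\]
Since $u\in\cS_\Lambda$, we have $\ip{u}{P_{\cS_\Lambda}A_\ell u}=\ip{u}{A_\ell u}$. Lemma~\ref{lem:D-energy} applied in direction $\ell$ at level $N_\ell$ then gives $2\ip{u}{A_\ell u}=\norm{A_\ell u}^2$. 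Next I split $\norm{A_\ell u}^2$ into its retained and discarded parts and apply Proposition~\ref{prop:dc-leakage}. This yields the exact blockwise formula
\[
\norm{A_\ell u}^2=\sum_{\boldsymbol p\in\Lambda}2^{N_\ell-K_\ell(\boldsymbol p_{-\ell})}\norm{X^{(\ell)}_{\boldsymbol p}}^2 .
\]

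\textbf{Step 2: blockwise bound of the quadratic term.} By orthogonality of the blocks $W_{\boldsymbol p}$ and the vanishing of $X^{(\ell)}_{\boldsymbol p}$ for $\ell\notin J(\boldsymbol p)$ (Proposition~\ref{prop:dc-leakage}),
\[
\norm{\sum_\ell r_\ell P_{\cS_\Lambda}A_\ell u}^2=\sum_{\boldsymbol p\in\Lambda\setminus\{\boldsymbol0\}}\norm{\sum_{\ell\in J(\boldsymbol p)} r_\ell X^{(\ell)}_{\boldsymbol p}}^2 .
\]
On each block I set $w_\ell:=c_\ell\Delta t\,2^{K_\ell(\boldsymbol p_{-\ell})}$. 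I then apply the weighted Cauchy--Schwarz inequality
\[
\norm{\sum_\ell r_\ell X_\ell}^2\le\Bigl(\sum_\ell w_\ell\Bigr)\sum_\ell \frac{r_\ell^2}{w_\ell}\norm{X_\ell}^2,
\]
where directions with $c_\ell=0$ are simply dropped, since they contribute nothing. By \eqref{eq:dc-block-constant} and \eqref{eq:dc-cfl},
\[
\sum_{\ell\in J(\boldsymbol p)}w_\ell=\Delta t\sum_{\ell\in J(\boldsymbol p)}c_\ell 2^{K_\ell(\boldsymbol p_{-\ell})}\le\Delta t\,\mathcal C_\Lambda(\boldsymbol c)\le1 .
\]
Moreover $r_\ell^2/w_\ell=r_\ell\,2^{N_\ell-K_\ell(\boldsymbol p_{-\ell})}$. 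Hence the quadratic term is at most
\[
\sum_\ell r_\ell\sum_{\boldsymbol p}2^{N_\ell-K_\ell(\boldsymbol p_{-\ell})}\norm{X^{(\ell)}_{\boldsymbol p}}^2=\sum_\ell r_\ell\norm{A_\ell u}^2 .
\]
This exactly cancels the linear term from Step~1, so $\norm{u^{n+1}}^2\le\norm{u}^2$.

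\textbf{Main obstacle.} The only delicate point is choosing weights so that the leakage factor $2^{N_\ell-K_\ell}$ is absorbed exactly while the remaining sum matches $\mathcal C_\Lambda$. That balancing is what the choice $w_\ell\propto c_\ell 2^{K_\ell}$ achieves. The rest is bookkeeping: the identity \eqref{eq:dc-block-constant} replaces the maximum over supports by a maximum over blocks (through Lemma~\ref{lem:dc-monotone}), and the degenerate cases $\Lambda=\{\boldsymbol 0\}$ and $c_\ell=0$ must be checked separately. For $\Lambda=\{\boldsymbol 0\}$ all $A_\ell u$ vanish, so the scheme is the identity and contraction is trivial.
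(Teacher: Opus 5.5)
Your proposal is correct and is essentially the paper's proof. Your weights $w_\ell=\Delta t\,c_\ell 2^{K_\ell(\boldsymbol p_{-\ell})}$ are exactly the paper's $r_\ell/(1+\omega_\ell)$, and your weighted Cauchy--Schwarz bound on $\norm{\sum_\ell r_\ell X^{(\ell)}_{\boldsymbol p}}^2$ is the same inequality the paper reaches after expanding into $Q_{\boldsymbol p}$ and recombining the cross terms into $(\sum_\ell r_\ell a_{\boldsymbol p,\ell})^2$. The differences are only bookkeeping: the paper divides by $1+\omega_\ell\geq1$ rather than by $w_\ell$, which avoids the $c_\ell=0$ (and $\Delta t=0$) degeneracy you handle by dropping directions, and it proves the uniform bound via Lemma~\ref{lem:dc-monotone} instead of citing \eqref{eq:dc-block-constant}.
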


\begin{proof}
Abbreviate $u=u^n$ and $P:=P_{\cS_\Lambda}$. For each direction $\ell$, define
\[
    g_\ell:=PA_\ell u,
    \qquad
    e_\ell:=(I-P)A_\ell u.
\]
For $\boldsymbol p\in\Lambda$, also set
\[
    a_{\boldsymbol p,\ell}
    :=\norm{X_{\boldsymbol p}^{(\ell)}},
    \qquad
    \omega_\ell(\boldsymbol p)
    :=2^{N_\ell-K_\ell(\boldsymbol p_{-\ell})}-1.
\]
The proof proceeds by decomposing the energy change into Haar blocks,
estimating the directional coupling within each block, and then using
the support to obtain a common CFL condition.

\underline{Step 1: Decompose the energy change into Haar blocks.}
Since $u\in\cS_\Lambda$ and $P$ is the orthogonal projection, $\ip{u}{g_\ell}=\ip{u}{A_\ell u}$. Applying Lemma~\ref{lem:D-energy} in direction $\ell$ at level $N_\ell$ gives
\[
    2\ip{u}{g_\ell}
    =2\ip{u}{A_\ell u}
    =\norm{A_\ell u}^2
    =\norm{g_\ell}^2+\norm{e_\ell}^2.
\]
Using this identity in the squared norm of
$u^{n+1}=u-\sum_{\ell=1}^d r_\ell g_\ell$, we obtain
\begin{equation}
\begin{aligned}
    \norm{u^{n+1}}^2
    &=\norm{u}^2
      -2\sum_{\ell=1}^d r_\ell\ip{u}{g_\ell}
      +\sum_{\ell=1}^d r_\ell^2\norm{g_\ell}^2
      +2\sum_{1\leq\ell<m\leq d}
        r_\ell r_m\ip{g_\ell}{g_m}\\
    &=\norm{u}^2
      -\sum_{\ell=1}^d r_\ell(1-r_\ell)\norm{g_\ell}^2
      -\sum_{\ell=1}^d r_\ell\norm{e_\ell}^2
      +2\sum_{1\leq\ell<m\leq d}
        r_\ell r_m\ip{g_\ell}{g_m}.
\end{aligned}
\label{eq:dc-energy-expansion}
\end{equation}

Each term can now be expressed in the same Haar decomposition.
Orthogonality gives
\[
    \norm{g_\ell}^2
    =\sum_{\boldsymbol p\in\Lambda}a_{\boldsymbol p,\ell}^2,
    \qquad
    \ip{g_\ell}{g_m}
    =\sum_{\boldsymbol p\in\Lambda}
      \ip{X_{\boldsymbol p}^{(\ell)}}
         {X_{\boldsymbol p}^{(m)}}.
\]
Moreover, Proposition~\ref{prop:dc-leakage} yields
\[
    \norm{e_\ell}^2
    =\sum_{\boldsymbol p\in\Lambda}
      \omega_\ell(\boldsymbol p)a_{\boldsymbol p,\ell}^2.
\]
Substituting these formulas into \eqref{eq:dc-energy-expansion} and
grouping the contributions from each block, we find
\[
    \norm{u^{n+1}}^2
    =\norm{u}^2-\sum_{\boldsymbol p\in\Lambda}Q_{\boldsymbol p},
\]
where
\[
    Q_{\boldsymbol p}
    := \sum_{\ell\in J(\boldsymbol p)}
          r_\ell\bigl(1-r_\ell+\omega_\ell(\boldsymbol p)\bigr)
          a_{\boldsymbol p,\ell}^2
       -2\sum_{\substack{\ell<m\\\ell,m\in J(\boldsymbol p)}}
          r_\ell r_m
          \ip{X_{\boldsymbol p}^{(\ell)}}
             {X_{\boldsymbol p}^{(m)}}.
\]
Only directions in $J(\boldsymbol p)$ occur because the other
projected components vanish. In particular, $Q_{\boldsymbol0}=0$.
It remains to prove $Q_{\boldsymbol p}\geq0$ for each nonzero index.

\underline{Step 2: Control the mixed terms within a block.}

Fix $\boldsymbol p\neq\boldsymbol0$ and abbreviate
\[
    J:=J(\boldsymbol p),
    \qquad
    a_\ell:=a_{\boldsymbol p,\ell},
    \qquad
    \omega_\ell:=\omega_\ell(\boldsymbol p).
\]
Cauchy-Schwarz gives
$\ip{X_{\boldsymbol p}^{(\ell)}}{X_{\boldsymbol p}^{(m)}}
\leq a_\ell a_m$. Since $r_\ell r_m\geq0$, this upper bound on
each mixed inner product gives the following lower bound on
$Q_{\boldsymbol p}$:
\[
    Q_{\boldsymbol p}
    \geq
      \sum_{\ell\in J}r_\ell(1-r_\ell+\omega_\ell)a_\ell^2
      -2\sum_{\substack{\ell<m\\\ell,m\in J}}
        r_\ell r_m a_\ell a_m
    =\sum_{\ell\in J}r_\ell(1+\omega_\ell)a_\ell^2
      -\left(\sum_{\ell\in J}r_\ell a_\ell\right)^2.
\]
The diagonal quadratic terms and the mixed terms have thus combined
into a single square. To compare this square with the remaining
diagonal sum, we use the weights supplied by the leakage identity:
\[
    \sum_{\ell\in J}r_\ell a_\ell
    =\sum_{\ell\in J}
      \sqrt{\frac{r_\ell}{1+\omega_\ell}}\,
      \left(\sqrt{r_\ell(1+\omega_\ell)}\,a_\ell\right).
\]
As $1+\omega_\ell>0$, Cauchy-Schwarz yields
\[
    \left(\sum_{\ell\in J}r_\ell a_\ell\right)^2
    \leq
    \left(\sum_{\ell\in J}\frac{r_\ell}{1+\omega_\ell}\right)
    \left(\sum_{\ell\in J}r_\ell(1+\omega_\ell)a_\ell^2\right).
\]
Consequently,
\[
    Q_{\boldsymbol p}
    \geq
    \left(1-\sum_{\ell\in J}\frac{r_\ell}{1+\omega_\ell}\right)
    \sum_{\ell\in J}r_\ell(1+\omega_\ell)a_\ell^2.
\]

The coefficient in parentheses is where the directional slice levels enter the time-step restriction.
\[
    \frac{r_\ell}{1+\omega_\ell}
    =\frac{c_\ell\Delta t\,2^{N_\ell}}
           {2^{N_\ell-K_\ell(\boldsymbol p_{-\ell})}}
    =\Delta t\,c_\ell\,2^{K_\ell(\boldsymbol p_{-\ell})}.
\]
We therefore obtain
\begin{equation}
    Q_{\boldsymbol p}
    \geq
    \left(
        1-\Delta t\sum_{\ell\in J(\boldsymbol p)}
          c_\ell\,2^{K_\ell(\boldsymbol p_{-\ell})}
    \right)
    \sum_{\ell\in J}r_\ell(1+\omega_\ell)a_\ell^2.
    \label{eq:dc-block-positivity}
\end{equation}
The final sum is nonnegative, so it suffices to bound the directional
sum in parentheses uniformly over the blocks.

\underline{Step 3: Obtain a common bound from the support.}

Let $S:=J(\boldsymbol p)$. Since $\boldsymbol p\neq\boldsymbol0$,
the support is nonempty and $\boldsymbol1_S\leq\boldsymbol p$.
Downward closedness gives $\boldsymbol1_S\in\Lambda$, and therefore
$S\in\mathscr S(\Lambda)$. For every $\ell\in S$,
\[
    (\boldsymbol1_S)_{-\ell}\leq\boldsymbol p_{-\ell}.
\]
By Lemma~\ref{lem:dc-monotone},
\[
    K_\ell(\boldsymbol p_{-\ell})
    \leq K_\ell\bigl((\boldsymbol1_S)_{-\ell}\bigr)
    =\kappa_\ell(S).
\]
Using $c_\ell\geq0$ and the definition of the CFL constant, we conclude
\[
    \sum_{\ell\in J(\boldsymbol p)}
    c_\ell\,2^{K_\ell(\boldsymbol p_{-\ell})}
    \leq\sum_{\ell\in S}c_\ell\,2^{\kappa_\ell(S)}
    =\mathcal C_\Lambda(S)
    \leq\mathcal C_\Lambda(\boldsymbol c).
\]
Under \eqref{eq:dc-cfl}, the first factor on the right-hand side of \eqref{eq:dc-block-positivity} is therefore nonnegative. Hence $Q_{\boldsymbol p}\geq0$ for every nonzero index as well. Together with $Q_{\boldsymbol0}=0$, this gives
\[
    \norm{u^{n+1}}^2
    =\norm{u^n}^2-\sum_{\boldsymbol p\in\Lambda}Q_{\boldsymbol p}
    \leq\norm{u^n}^2.
\]
\end{proof}

\begin{remark}[Interpretation]
\label{rem:dc-interpretation}
The quantity $2^{-K_\ell(\boldsymbol q)}$ is the finest mesh size in
direction $\ell$ on the transverse slice $\boldsymbol q$. By
\eqref{eq:dc-block-constant}, condition \eqref{eq:dc-cfl} is equivalent
to requiring
\[
    \Delta t\sum_{\ell\in J(\boldsymbol p)}
    c_\ell\,2^{K_\ell(\boldsymbol p_{-\ell})}\leq1
    \qquad
    \text{for every }\boldsymbol p\in\Lambda\setminus\{\boldsymbol0\}.
\]
Thus, each block satisfies a CFL condition involving the mesh sizes
on its directional slices. The factors associated with the ambient
levels $N_\ell$ cancel in Step~2, leaving a bound determined by the
transport velocities and the slice levels of $\Lambda$.

For $\Lambda=\Lambda_N$, the weighted Cauchy--Schwarz estimate in
Step~2 provides an alternative to the counting estimate in
Proposition~\ref{prop:d-mixed-terms}. Both arguments recover the same
threshold for $\Lambda_N$. The weighted estimate also recovers the
sharp CFL condition for full tensor-product grids, including
anisotropic ones, as shown below.
\end{remark}

\subsection{A sharpness criterion}
\label{subsec:dc-sharpness}

The sufficient CFL condition becomes necessary if the directional
contributions associated with a maximizing support can be realized
simultaneously by a single mode in $\cS_\Lambda$. To identify when this
is possible, for $S\in\mathscr S(\Lambda)$ define its \emph{corner} by
\begin{equation}
    \boldsymbol\kappa(S)
    :=
    \sum_{\ell\in S}\kappa_\ell(S)\,\boldsymbol e_\ell
    \in\mathbb N_0^d.
    \label{eq:dc-corner}
\end{equation}
Each component $\kappa_\ell(S)$ is an admissible directional level
when the other active coordinates are fixed at level $1$. Their
simultaneous realization is a stronger requirement: the vector
$\boldsymbol\kappa(S)$ need not belong to $\Lambda$. When it does belong
to $\Lambda$ for a maximizing support, it determines an alternating
mode that attains the sufficient CFL bound.

\begin{theorem}[Sharpness criterion]
\label{thm:dc-sharpness}
Let $S^*\in\mathscr S(\Lambda)$ be a maximizer of
$\mathcal C_\Lambda(\cdot)$ in \eqref{eq:dc-constant}. If
$\boldsymbol\kappa(S^*)\in\Lambda$, then the scheme
\eqref{eq:dc-scheme} is $L^2$-contractive for every
$u^n\in\cS_\Lambda$ if and only if
$\Delta t\,\mathcal C_\Lambda(\boldsymbol c)\leq1$.
In particular, this holds whenever some singleton $S^*=\{\ell\}$
is a maximizer, since then
$\boldsymbol\kappa(S^*)=N_\ell\boldsymbol e_\ell\in\Lambda$.
\end{theorem}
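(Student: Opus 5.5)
Sufficiency is exactly Theorem~\ref{thm:dc-sufficient}, so only necessity needs proof. Assume $\boldsymbol\kappa:=\boldsymbol\kappa(S^*)\in\Lambda$ and set $\kappa_\ell:=\kappa_\ell(S^*)$ for $\ell\in S^*$. The plan is to exhibit a single eigenfunction of the amplification operator in $\cS_\Lambda$ with eigenvalue $1-2\Delta t\,\mathcal C_\Lambda(\boldsymbol c)$. This generalizes the alternating modes used for Theorems~\ref{thm:2d-square} and \ref{thm:d-hypercube}.

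The candidate is the tensor product alternating mode
\[
v(\boldsymbol x):=\prod_{\ell\in S^*}\phi_\ell(x_\ell),\qquad \phi_\ell\in V_{\kappa_\ell}^{(\ell)},\quad (\phi_\ell)_j=(-1)^j\ \text{on } I_{\kappa_\ell,j}.
\]
Since $\phi_\ell$ is orthogonal to $V_{\kappa_\ell-1}^{(\ell)}$, we have $\phi_\ell\in W_{\kappa_\ell}^{(\ell)}$. In directions outside $S^*$ we use the constant $1\in W_0$. Hence $v\in W_{\boldsymbol\kappa}\subset\cS_\Lambda$, and this is precisely where the hypothesis $\boldsymbol\kappa\in\Lambda$ is used.

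Next I compute $P_{\cS_\Lambda}A_\ell v$ for each $\ell$. For $\ell\notin S^*$, $v$ is constant in $x_\ell$, so $A_\ell v=0$. For $\ell\in S^*$, the transverse factor $\psi=\prod_{m\ne\ell}\phi_m$ lies in $W^{(-\ell)}_{\boldsymbol\kappa_{-\ell}}$. By the slice formula in Step~1 of the proof of Proposition~\ref{prop:dc-leakage}, we get $P_{\cS_\Lambda}A_\ell v=(P_{V_{K}^{(\ell)}}D_{N_\ell}\phi_\ell)\psi$ with $K=K_\ell(\boldsymbol\kappa_{-\ell})$. Two facts then finish the computation.

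First, $K=\kappa_\ell$. Since $\boldsymbol\kappa\in\Lambda$, we have $K\ge\kappa_\ell$. Since $(\boldsymbol 1_{S^*})_{-\ell}\le\boldsymbol\kappa_{-\ell}$, Lemma~\ref{lem:dc-monotone} gives $K\le\kappa_\ell$. (If some $\kappa_m=0$ were possible, the support would change, but $\kappa_m\ge1$ because $\boldsymbol1_{S^*}\in\Lambda$.)

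Second, Lemma~\ref{lem:multilevel} gives $P_{V_{\kappa_\ell}}D_{N_\ell}\phi_\ell=2^{\kappa_\ell-N_\ell}D_{\kappa_\ell}\phi_\ell=2^{\kappa_\ell-N_\ell}\cdot 2\phi_\ell$.

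Therefore $P_{\cS_\Lambda}A_\ell v=2^{\kappa_\ell-N_\ell+1}v$. The scheme then gives
\[
u^{n+1}=\Bigl(1-2\sum_{\ell\in S^*}r_\ell 2^{\kappa_\ell-N_\ell}\Bigr)v=\bigl(1-2\Delta t\,\mathcal C_\Lambda(S^*)\bigr)v,
\]
using $r_\ell=c_\ell\Delta t\,2^{N_\ell}$. Contractivity forces $|1-2\Delta t\,\mathcal C_\Lambda(\boldsymbol c)|\le1$, i.e.\ $\Delta t\,\mathcal C_\Lambda(\boldsymbol c)\le1$, because $S^*$ is a maximizer.

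For the singleton case $S^*=\{\ell\}$: here $(\boldsymbol1_{S^*})_{-\ell}=\boldsymbol0$, so $\kappa_\ell=K_\ell(\boldsymbol0)=N_\ell$. Thus $\boldsymbol\kappa(S^*)=N_\ell\boldsymbol e_\ell$, which lies in $\Lambda$ by the definition of $N_\ell$ together with downward closedness.

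The main obstacle is the second bullet's projection step, namely verifying that $P_{\cS_\Lambda}A_\ell v$ is exactly a multiple of $v$. The worry is that leakage into other blocks on the same slice might survive the projection. The identity $K_\ell(\boldsymbol\kappa_{-\ell})=\kappa_\ell$, combined with Lemma~\ref{lem:multilevel}, is what rules this out.
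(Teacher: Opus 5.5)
Your proposal is correct and follows the paper's proof essentially step for step. You use the same alternating tensor mode in $W_{\boldsymbol\kappa(S^*)}$, the same identification $K_\ell(\boldsymbol\kappa_{-\ell})=\kappa_\ell$ via Lemma~\ref{lem:dc-monotone} and $\kappa_m\geq1$, and Lemma~\ref{lem:multilevel} to obtain the common eigenvalue $1-2\Delta t\,\mathcal C_\Lambda(S^*)$; your explicit verification of the singleton case ($\kappa_\ell=K_\ell(\boldsymbol 0)=N_\ell$) is a small detail the paper leaves implicit.
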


\begin{proof}
Sufficiency follows from Theorem~\ref{thm:dc-sufficient}. To prove
necessity, write $S=S^*$ and
$\boldsymbol\kappa=\boldsymbol\kappa(S^*)$. We first identify the
directional slice levels at this corner and then construct an
alternating mode supported on $W_{\boldsymbol\kappa}$.

\underline{Step 1: The slice levels at the corner.}

We claim that
$K_\ell(\boldsymbol\kappa_{-\ell})=\kappa_\ell$ for every
$\ell\in S$. For $m\in S$, admissibility gives
$(1;(\boldsymbol1_S)_{-m})_m=\boldsymbol1_S\in\Lambda$, and hence
$\kappa_m=\kappa_m(S)\geq1$. Therefore
$\boldsymbol\kappa_{-\ell}\geq(\boldsymbol1_S)_{-\ell}$
componentwise. Lemma~\ref{lem:dc-monotone} now yields
\[
    K_\ell(\boldsymbol\kappa_{-\ell})
    \leq
    K_\ell\bigl((\boldsymbol1_S)_{-\ell}\bigr)
    =\kappa_\ell(S)=\kappa_\ell.
\]
On the other hand, $\boldsymbol\kappa\in\Lambda$ implies
$(\kappa_\ell;\boldsymbol\kappa_{-\ell})_\ell\in\Lambda$, so
$K_\ell(\boldsymbol\kappa_{-\ell})\geq\kappa_\ell$.
The two inequalities prove the claim.

\underline{Step 2: An extremal mode.}

For $k\geq1$, let $\psi_k^{(\ell)}\in W_k^{(\ell)}$ be the
level-$k$ alternating function defined by
$\psi_k^{(\ell)}\big|_{I_{k,j}}=(-1)^j$, and set
$\psi_0^{(\ell)}:=1$. Consider
\[
    v:=\bigotimes_{\ell=1}^d\psi_{\kappa_\ell}^{(\ell)}
    \in W_{\boldsymbol\kappa}\subset\cS_\Lambda.
\]
If $m\notin S$, then $v$ is constant in $x_m$, so $A_mv=0$.
For $\ell\in S$, the transverse factor of $A_\ell v$ is
\[
    \bigotimes_{m\neq\ell}\psi_{\kappa_m}^{(m)}
    \in W_{\boldsymbol\kappa_{-\ell}}^{(-\ell)}.
\]
By the slice decomposition \eqref{eq:dc-slice} and Step~1,
$P_{\cS_\Lambda}$ therefore acts on $A_\ell v$ as
$P_{V_{\kappa_\ell}^{(\ell)}}$ in the $\ell$th coordinate and as
the identity in every other coordinate. Lemma~\ref{lem:multilevel},
together with
$D_{\kappa_\ell}\psi_{\kappa_\ell}^{(\ell)}
=2\psi_{\kappa_\ell}^{(\ell)}$ for $\kappa_\ell\geq1$, gives
\begin{align*}
    P_{\cS_\Lambda}A_\ell v
    =
    2^{\kappa_\ell-N_\ell}
    \left(\bigotimes_{m<\ell}\psi_{\kappa_m}^{(m)}\right)
    \otimes
    \bigl(D_{\kappa_\ell}\psi_{\kappa_\ell}^{(\ell)}\bigr)
    \otimes
    \left(\bigotimes_{m>\ell}\psi_{\kappa_m}^{(m)}\right)
    =
    2^{\kappa_\ell-N_\ell+1}v.
\end{align*}
Thus $v$ is a common eigenfunction of the projected directional
operators. Taking $u^n=v$ in \eqref{eq:dc-scheme}, we obtain
\begin{align*}
    u^{n+1}
    =
    \left(1-\sum_{\ell\in S}
        r_\ell\,2^{\kappa_\ell-N_\ell+1}\right)v
    =
    \bigl(1-2\Delta t\,\mathcal C_\Lambda(S^*)\bigr)v
    =
    \bigl(1-2\Delta t\,\mathcal C_\Lambda(\boldsymbol c)\bigr)v.
\end{align*}
Contractivity for this nonzero mode requires
$\abs{1-2\Delta t\,\mathcal C_\Lambda(\boldsymbol c)}\leq1$.
Since $\Delta t\,\mathcal C_\Lambda(\boldsymbol c)\geq0$, this is
equivalent to
$\Delta t\,\mathcal C_\Lambda(\boldsymbol c)\leq1$.
\end{proof}

The criterion recovers the familiar full grid condition directly. For a full grid (i.e., tensor product) index set, all directional maximal levels can occur in the same block, and the support containing every coordinate is a maximizer.

\begin{corollary}[Anisotropic full grids]
\label{cor:dc-fullgrid}
Let
$\Lambda=\{\boldsymbol p\in\mathbb N_0^d:
p_\ell\leq N_\ell,\ 1\leq\ell\leq d\}$
with $N_\ell\geq1$. Then the scheme \eqref{eq:dc-scheme} is
$L^2$-contractive if and only if
\[
    \Delta t\sum_{\ell=1}^d\frac{c_\ell}{h_\ell}\leq1,
    \qquad h_\ell=2^{-N_\ell}.
\]
\end{corollary}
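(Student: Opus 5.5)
The plan is to read Corollary~\ref{cor:dc-fullgrid} off Theorems~\ref{thm:dc-sufficient} and~\ref{thm:dc-sharpness}. It suffices to compute $\mathcal C_\Lambda(\boldsymbol c)$ for the box $\Lambda=\prod_\ell\{0,\ldots,N_\ell\}$ and to check that the full support $S^*=\{1,\ldots,d\}$ is a maximizer whose corner lies in $\Lambda$.

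\textbf{Computing the constant.} First, for a box every slice is full. For any $\ell$ and any transverse index $\boldsymbol q$ with $(0;\boldsymbol q)_\ell\in\Lambda$, we have $(k;\boldsymbol q)_\ell\in\Lambda$ exactly when $k\le N_\ell$, so $K_\ell(\boldsymbol q)=N_\ell$. Second, since every $N_\ell\ge1$, the index $\boldsymbol1_S$ lies in $\Lambda$ for every nonempty $S$. Hence every nonempty $S\subseteq\{1,\ldots,d\}$ is admissible, and $\kappa_\ell(S)=N_\ell$ for each $\ell\in S$. This gives
\[
\mathcal C_\Lambda(S)=\sum_{\ell\in S}c_\ell 2^{N_\ell}=\sum_{\ell\in S}\frac{c_\ell}{h_\ell}.
\]
Because $c_\ell\ge0$, this quantity is monotone in $S$ under inclusion. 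So the maximum is attained at $S^*=\{1,\ldots,d\}$, and $\mathcal C_\Lambda(\boldsymbol c)=\sum_{\ell=1}^d c_\ell/h_\ell$. Theorem~\ref{thm:dc-sufficient} then yields sufficiency.

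\textbf{Necessity.} The corner is $\boldsymbol\kappa(S^*)=(N_1,\ldots,N_d)$, which belongs to the box. Theorem~\ref{thm:dc-sharpness} therefore applies and gives the equivalence. For concreteness, the extremal mode is the tensor product of the finest-level alternating functions $\psi^{(\ell)}_{N_\ell}$. On this mode each $P_{\cS_\Lambda}A_\ell$ acts as multiplication by $2$, so one step multiplies it by $1-2\Delta t\sum_\ell c_\ell/h_\ell$.

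The only point needing care is the degenerate case in which some $c_\ell=0$. The maximizer is then not unique. However, $S^*=\{1,\ldots,d\}$ still attains the maximum value, and Theorem~\ref{thm:dc-sharpness} only requires some maximizer with its corner in $\Lambda$, so the argument goes through. If all $c_\ell=0$, both sides of the equivalence hold trivially. I do not expect any substantive obstacle, since the verification is purely combinatorial.
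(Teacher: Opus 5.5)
Your proposal is correct and follows the paper's own proof. Like the paper, you observe $K_\ell\equiv N_\ell$ on the box, note that the full support $S^*=\{1,\ldots,d\}$ maximizes $\mathcal C_\Lambda(S)=\sum_{\ell\in S}c_\ell 2^{N_\ell}$ with corner $(N_1,\ldots,N_d)\in\Lambda$, and invoke Theorem~\ref{thm:dc-sharpness}. Your additional checks are correct and fill in details the paper leaves implicit: every support is admissible because $N_\ell\geq1$, the maximizer need not be unique when some $c_\ell=0$, and the extremal mode is multiplied by $1-2\Delta t\sum_\ell c_\ell/h_\ell$.
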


\begin{proof}
Here $K_\ell(\boldsymbol q)=N_\ell$ for every
$\boldsymbol q\in\mathcal Q_\ell$. Consequently,
$\mathcal C_\Lambda(S)=\sum_{\ell\in S}c_\ell2^{N_\ell}$ is
maximized by $S^*=\{1,\ldots,d\}$, with
\[
    \mathcal C_\Lambda(\boldsymbol c)
    =\sum_{\ell=1}^d c_\ell2^{N_\ell},
    \qquad
    \boldsymbol\kappa(S^*)=(N_1,\ldots,N_d)\in\Lambda.
\]
The result follows from Theorem~\ref{thm:dc-sharpness}.
\end{proof}

For the standard sparse grid index set $\Lambda_N$, increasing the
size of the support reduces the available level in each active
direction. The resulting CFL constant is attained by a singleton,
which gives another proof of the sharp condition from
Section~\ref{sec:proof-arbitrary-d}.

\begin{corollary}[An alternative proof of
Theorem~\ref{thm:d-hypercube}]
\label{cor:dc-simplex}
Let $\Lambda=\Lambda_N$ with $N\geq1$. Then
$\mathcal C_\Lambda(\boldsymbol c)
=2^N\max_{1\leq\ell\leq d}c_\ell$, attained by a singleton,
and the scheme is $L^2$-contractive if and only if
\[
    \Delta t\,2^N\max_{1\leq\ell\leq d}c_\ell\leq1.
\]
When $\max_\ell c_\ell>0$, this is equivalent to
$\Delta t\leq h/\max_\ell c_\ell$ with $h=2^{-N}$.
If all velocities vanish, the scheme is contractive for every
$\Delta t\geq0$.
\end{corollary}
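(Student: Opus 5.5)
The plan is to compute $\mathcal C_{\Lambda_N}(\boldsymbol c)$ explicitly from Definition~\ref{def:dc-cfl-constant} and then invoke Theorem~\ref{thm:dc-sharpness} through its singleton clause. Here $\Lambda_N=\{\boldsymbol p\in\mathbb N_0^d:|\boldsymbol p|_1\leq N\}$, so $N_\ell=N$ for every $\ell$ (because $N\boldsymbol e_\ell\in\Lambda_N$), and $h_\ell=h=2^{-N}$. The scheme \eqref{eq:dc-scheme} therefore coincides with \eqref{eq:d-sparse-update}. As already noted in the paper, the slice levels are $K_\ell(\boldsymbol q)=N-|\boldsymbol q|_1$.

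First I identify the admissible supports. For a nonempty $S\subseteq\{1,\ldots,d\}$, we have $S\in\mathscr S(\Lambda_N)$ if and only if $\boldsymbol1_S\in\Lambda_N$, that is, $\#S\leq N$. For $\ell\in S$, $(\boldsymbol1_S)_{-\ell}$ has total level $\#S-1$. Hence
\[
    \kappa_\ell(S)=N-\#S+1,\qquad \mathcal C_{\Lambda_N}(S)=2^{N-\#S+1}\sum_{\ell\in S}c_\ell .
\]

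The key comparison is to show $\mathcal C_{\Lambda_N}(S)\leq 2^N\max_\ell c_\ell$. Write $s=\#S\geq1$. Then $\sum_{\ell\in S}c_\ell\leq s\max_\ell c_\ell$, and
\[
    s\,2^{N-s+1}\leq 2^N,
\]
since this is equivalent to $s\leq 2^{s-1}$, which holds for every integer $s\geq1$. For a singleton $S=\{\ell^*\}$ with $c_{\ell^*}=\max_\ell c_\ell$, we get $\mathcal C_{\Lambda_N}(S)=2^Nc_{\ell^*}$, and singletons are admissible because $N\geq1$. So the maximum equals $2^N\max_\ell c_\ell$ and is attained by a singleton. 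This step is where care is needed: the bound must hold uniformly in $s$, with equality for $s=1$ (and also $s=2$, which is harmless since we only need some maximizer to be a singleton).

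Theorem~\ref{thm:dc-sharpness} then applies with $S^*=\{\ell^*\}$, whose corner is $N\boldsymbol e_{\ell^*}\in\Lambda_N$. It gives contractivity if and only if $\Delta t\,2^N\max_\ell c_\ell\leq1$. When $\max_\ell c_\ell>0$, dividing yields $\Delta t\leq h/\max_\ell c_\ell$, which recovers Theorem~\ref{thm:d-hypercube}. If all $c_\ell=0$, then $\mathcal C_{\Lambda_N}(\boldsymbol c)=0$, the condition holds for every $\Delta t\geq0$, and indeed the scheme reduces to the identity map.
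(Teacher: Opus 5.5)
Your proposal is correct and follows the paper's proof essentially step for step. It uses the same characterization of admissible supports by $\#S\le N$, the same slice level $\kappa_\ell(S)=N-\#S+1$, the same inequality $s\,2^{1-s}\le 1$ showing that a singleton maximizes $\mathcal C_{\Lambda_N}$, and the same appeal to the singleton clause of Theorem~\ref{thm:dc-sharpness}; your remarks that $N_\ell=N$ (so \eqref{eq:dc-scheme} coincides with \eqref{eq:d-sparse-update}) and that the zero-velocity case reduces to the identity map make explicit two details the paper leaves implicit.
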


\begin{proof}
The admissible supports are precisely the nonempty sets
$S\subseteq\{1,\ldots,d\}$ with $\#S\leq N$.
For such an $S$ and every $\ell\in S$, we have
$\kappa_\ell(S)=N-\#S+1$. Hence
\begin{align*}
    \mathcal C_\Lambda(S)
    &=2^{N-\#S+1}\sum_{\ell\in S}c_\ell
    \\
    &\leq2^{N-\#S+1}\,\#S\,\max_\ell c_\ell
    \leq2^N\max_\ell c_\ell
    =\mathcal C_\Lambda(\{\ell^*\}),
\end{align*}
where $c_{\ell^*}=\max_\ell c_\ell$ and the second inequality uses
$\#S\,2^{1-\#S}\leq1$. Thus the singleton $\{\ell^*\}$ is a
maximizer, and Theorem~\ref{thm:dc-sharpness} applies.
\end{proof}

For a general downward closed set, the corner of a maximizing support may lie outside $\Lambda$. In that case, the criterion in Theorem~\ref{thm:dc-sharpness} does not decide whether the sufficient bound is sharp. The following example gives numerical evidence that the exact threshold can be strictly larger.

\begin{example}[An L-shaped set: the sufficient condition need not be sharp]
\label{ex:dc-Lshape}
Let $d=2$ and consider the L-shaped index set
\[
    \Lambda
    =\{\boldsymbol p\in\mathbb N_0^2:\boldsymbol p\leq(4,1)\}
    \cup
    \{\boldsymbol p\in\mathbb N_0^2:\boldsymbol p\leq(1,3)\},
\]
where the inequalities are componentwise. Then $N_1=4$, $N_2=3$,
and
\[
    \kappa_1(\{1,2\})=K_1(1)=4,
    \qquad
    \kappa_2(\{1,2\})=K_2(1)=3.
\]
It follows that
\[
    \mathcal C_\Lambda(\{1\})=2^4c_1,
    \qquad
    \mathcal C_\Lambda(\{2\})=2^3c_2,
    \qquad
    \mathcal C_\Lambda(\{1,2\})=2^4c_1+2^3c_2.
\]
For $c_1,c_2>0$, the unique maximizer is $S^*=\{1,2\}$, but its
corner $\boldsymbol\kappa(S^*)=(4,3)$ does not belong to $\Lambda$.
Theorem~\ref{thm:dc-sharpness} therefore does not apply.

The computations in Section~\ref{subsec:numerics-dc} indicate that
the sufficient bound is not sharp for the two velocity vectors
considered there. For $\boldsymbol c=(1,1)$,
Theorem~\ref{thm:dc-sufficient} gives $\Delta t\leq1/24$, whereas the
numerically computed contractivity threshold is
$\Delta t^*\approx5.536\times10^{-2}$. For $\boldsymbol c=(2,5)$,
the sufficient bound is $\Delta t\leq1/72$, whereas
$\Delta t^*\approx2.148\times10^{-2}$.
Thus the corner criterion provides a sufficient geometric condition
for sharpness; a complete geometric characterization of the threshold
for general downward closed sets requires further analysis.
\end{example}

\section{Numerical examples}\label{sec:numerics}

In this part, we present numerical examples to validate the sharp CFL condition for the sparse grid scheme. In Sections~\ref{subsec:numerics-2d} and \ref{subsec:numerics-higher-d}, we study the sparse grid method for transport equations in 2D and 4D, respectively. We take the maximum mesh level to be $N=5$ in both cases. We show that the CFL condition is sharp numerically.
In Section~\ref{subsec:numerics-dc}, we check a downward closed index sets of $L$-shaped form and show that the theoretical CFL condition is smaller than the true threshold numerically.

\subsection{Two dimensional example}
\label{subsec:numerics-2d}

We numerically examine the sharp CFL condition in
Theorem~\ref{thm:2d-square} and the corresponding spectral characterization in Corollary~\ref{cor:2d-amplification-radius} for the periodic transport problem in 2D:
\begin{equation*}
    u_t+c_1u_{x}+c_2u_{y}=0, \qquad (x,y)\in[0,1]^2.
\end{equation*}
We define the CFL number by $\nu:={\Delta t}/{h}$.

For both isotropic velocity $\boldsymbol c=(1,1)$ and anisotropic velocity $\boldsymbol c=(2,5)$, the computed norm agrees with $\norm{G_{\boldsymbol c}(\nu)}_2=\max\{1,2m\nu-1\}$ with $m=\max(c_1,c_2)$, as predicted by \eqref{eq:2d-sparse-amplification-radius}. It remains equal to one through the predicted threshold and increases immediately once that threshold is exceeded.

\begin{figure}[htbp]
    \centering
    \begin{subfigure}[t]{0.485\textwidth}
        \centering
        \includegraphics[width=\linewidth]{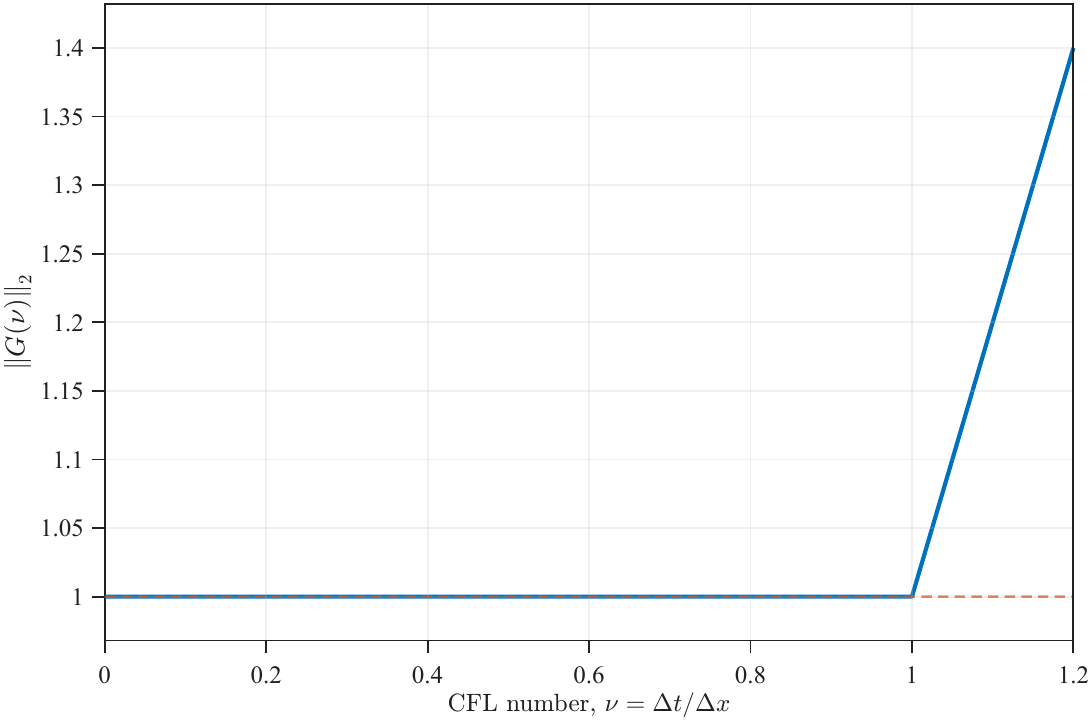}
        \caption{Isotropic velocity $\boldsymbol c=(1,1)$.}
        \label{fig:fe-norm-scan-d2-isotropic}
    \end{subfigure}
    \hfill
    \begin{subfigure}[t]{0.485\textwidth}
        \centering
        \includegraphics[width=\linewidth]{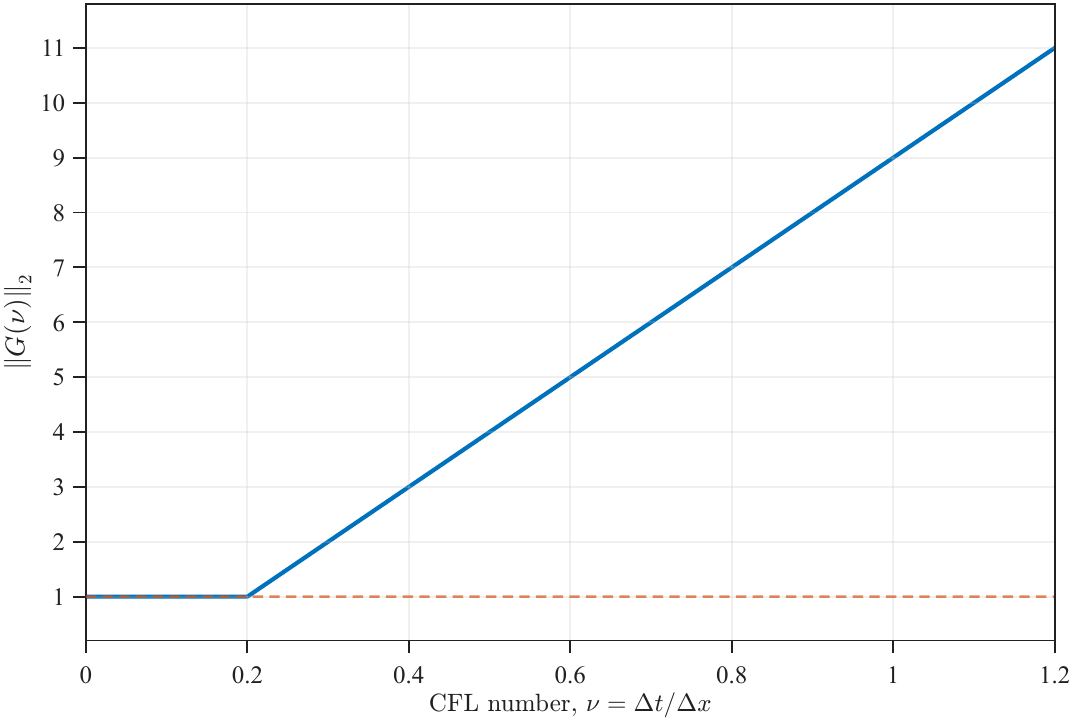}
        \caption{Anisotropic velocity $\boldsymbol c=(2,5)$.}
        \label{fig:fe-norm-scan-d2-anisotropic}
    \end{subfigure}
    \caption{Forward Euler amplification norms for the two dimensional sparse grid scheme. The computed curves remain equal to $1$ through $\nu=1$ for $\boldsymbol c=(1,1)$ and through $\nu=1/5$ for $\boldsymbol c=(2,5)$, then grow linearly beyond the
    respective sharp thresholds.}
    \label{fig:fe-norm-scan-d2}
\end{figure}

We then take the initial condition
\begin{equation*}
    u_0(x,y) = \cos(2\pi(x+y)),
\end{equation*}
and apply the forward Euler scheme \eqref{eq:sparse-update} and monitor the normalized discrete $L^2$ norm $E^n:={\norm{u_h^n}_{L^2}}/{\norm{u_h^0}_{L^2}}$ for $t_n=n\Delta t$.
We first consider the isotropic velocity $\boldsymbol c=(1,1)$. In this case, at the sharp CFL threshold $\nu=1$, the $L^2$ norm of the solution decays with respect to time. Increasing the CFL number to $\nu=1.001$ produces the exponential growth after a short transient, as shown in Figure~\ref{fig:l2-history-c1-1-c2-1}. The numerical results confirm the sharp CFL condition $\nu=1$ for the sparse grid scheme. We next consider the anisotropic velocity $\boldsymbol c=(2,5)$. At the CFL threshold $\nu=0.2$, the scheme is stable, while at $\nu=0.201$, the scheme becomes unstable. The results are shown in Figure~\ref{fig:l2-history-c1-2-c2-5}.

\begin{figure}[htbp]
    \centering
    \begin{subfigure}[t]{0.485\textwidth}
        \centering
        \includegraphics[width=\linewidth]{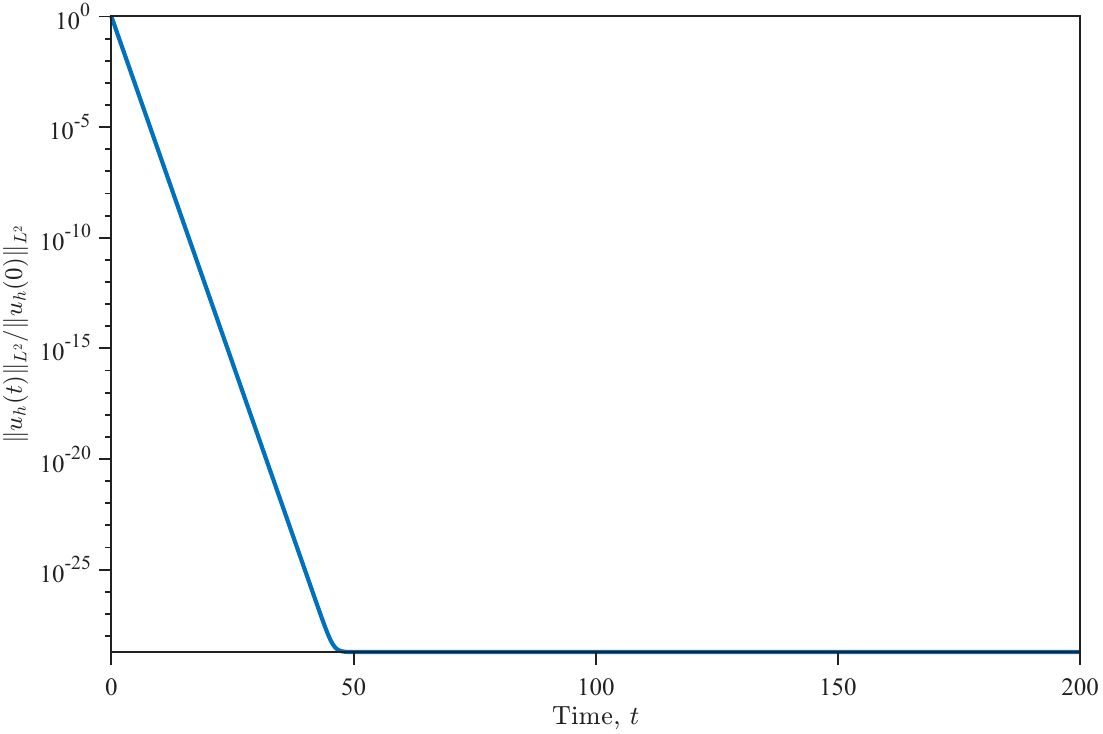}
        \caption{Sharp CFL condition, $\nu=\Delta t/h=1$.}
        \label{fig:l2-c1-1-c2-1-cfl1000}
    \end{subfigure}
    \hfill
    \begin{subfigure}[t]{0.485\textwidth}
        \centering
        \includegraphics[width=\linewidth]{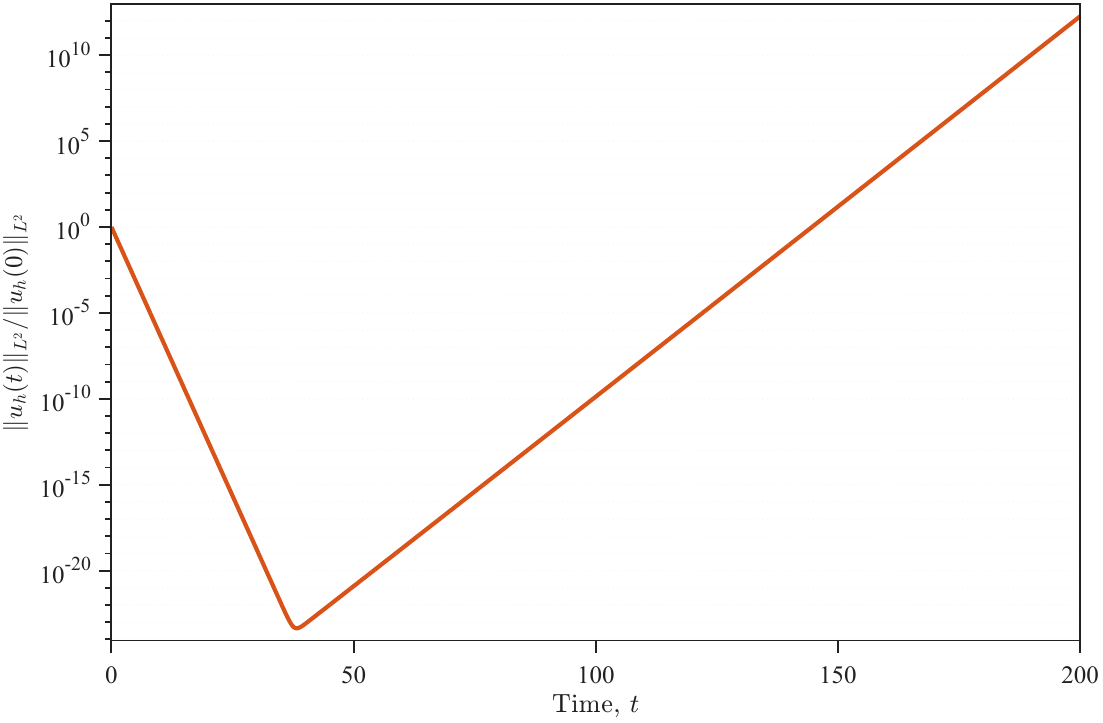}
        \caption{Above the sharp CFL condition, $\nu=\Delta t/h=1.001$.}
        \label{fig:l2-c1-1-c2-1-cfl1001}
    \end{subfigure}
    \caption{The time evolution of the normalized $L^2$ norm for the isotropic velocity $(c_1,c_2)=(1,1)$. At the predicted sharp threshold $\nu=1$ (left), the norm never exceeds its initial value. Increasing the CFL number by $10^{-3}$ (right) results in exponential growth of the norm after a short transient.}
    \label{fig:l2-history-c1-1-c2-1}
\end{figure}

\begin{figure}[htbp]
    \centering
    \begin{subfigure}[t]{0.485\textwidth}
        \centering
        \includegraphics[width=\linewidth]{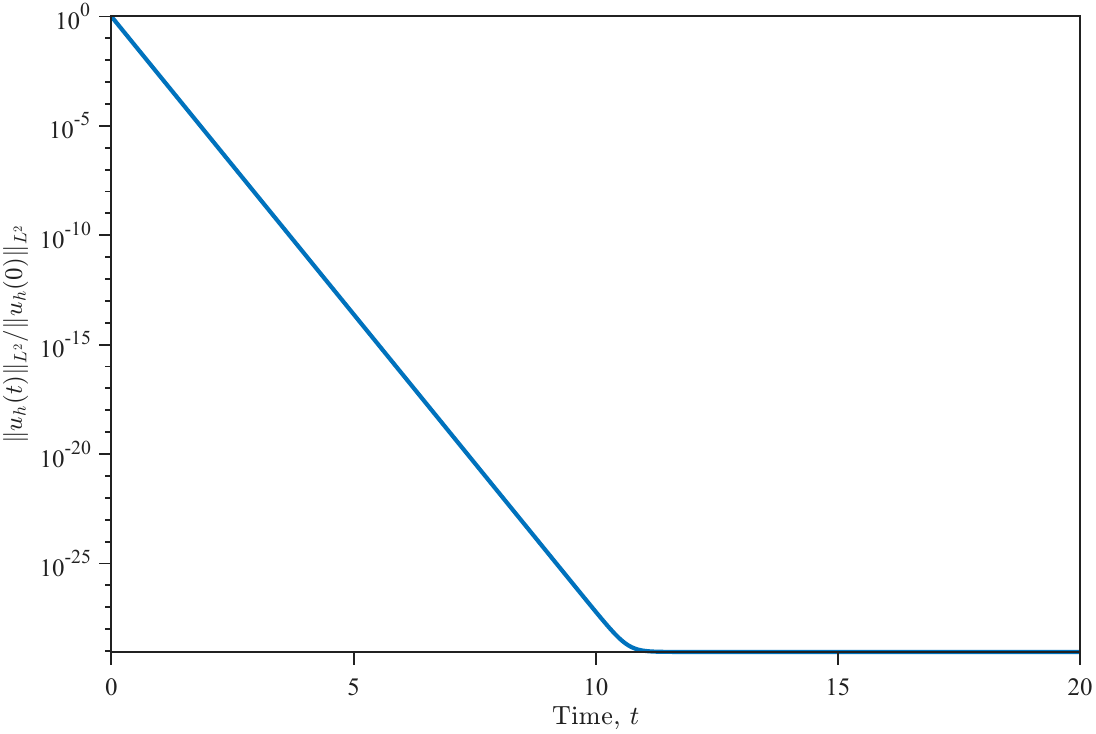}
        \caption{Sharp endpoint, $\nu=0.200$.}
        \label{fig:l2-c1-2-c2-5-cfl0200}
    \end{subfigure}
    \hfill
    \begin{subfigure}[t]{0.485\textwidth}
        \centering
        \includegraphics[width=\linewidth]{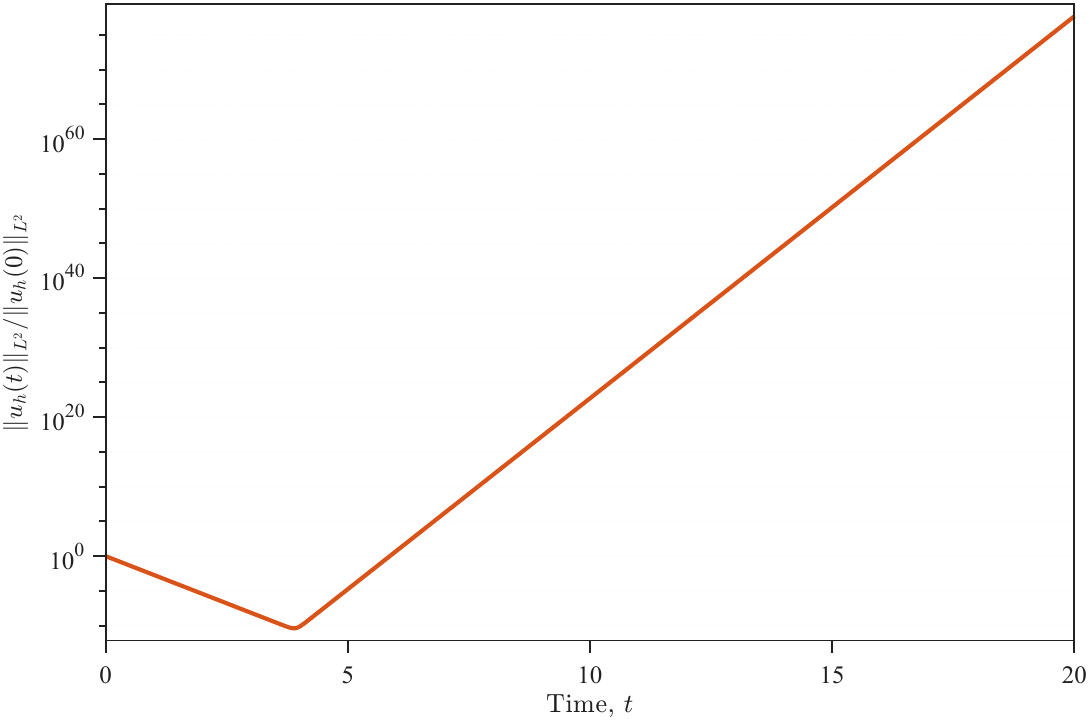}
        \caption{Above the endpoint, $\nu=0.201$.}
        \label{fig:l2-c1-2-c2-5-cfl0201}
    \end{subfigure}
    \caption{The time evolution of the normalized $L^2$ norm for the anisotropic velocity $(c_1,c_2)=(2,5)$. At the predicted sharp threshold $\nu=0.2$ (left), the norm never exceeds its initial value. Increasing the CFL number by $10^{-3}$ (right) results in exponential growth of the norm after a short transient.}
    \label{fig:l2-history-c1-2-c2-5}
\end{figure}

\FloatBarrier

\subsection{Higher dimensional example}
\label{subsec:numerics-higher-d}

In this part, we present numerical results for the sparse grid scheme in higher dimensions. We consider the periodic transport problem in four dimensions:
\begin{equation*}
    u_t+\sum_{\ell=1}^4 c_\ell u_{x_\ell}=0,
    \qquad
    \boldsymbol x=(x_1,x_2,x_3,x_4)\in[0,1]^4.
\end{equation*}
with $\nu:=\Delta t/h$. Two velocity vectors are used for comparison. For $\boldsymbol c=(1,1,1,1)$, all four directions reach the endpoint at the
same value of $\nu$; for $\boldsymbol c=(1,2,3,4)$, the fourth direction is
the unique limiting one. Figure~\ref{fig:fe-norm-scan-d4} shows the resulting
breakpoints at $\nu=1$ and $\nu=1/4$, respectively. In both cases the
computed norm follows
$\norm{G_{\boldsymbol c}(\nu)}_2=\max\{1,2m\nu-1\}$, with
$m=\max_{1\leq\ell\leq4}c_\ell$, in agreement with
\eqref{eq:d-exact-amplification-radius}.

\begin{figure}[htbp]
    \centering
    \begin{subfigure}[t]{0.485\textwidth}
        \centering
        \includegraphics[width=\linewidth]{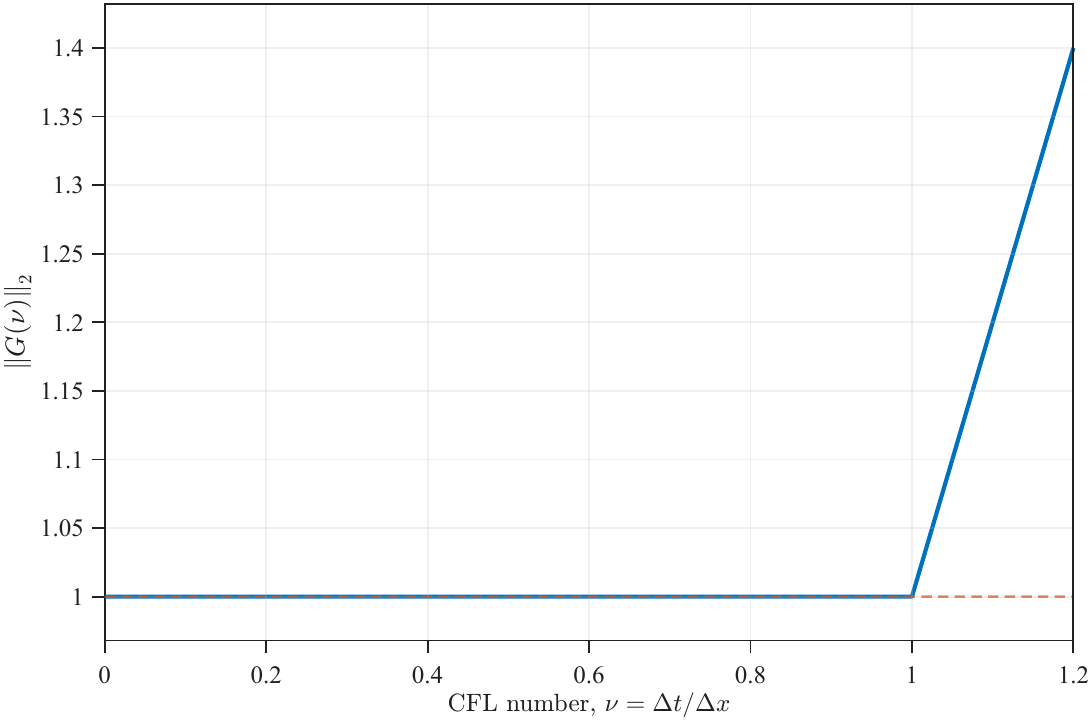}
        \caption{Isotropic velocity $\boldsymbol c=(1,1,1,1)$.}
        \label{fig:fe-norm-scan-d4-isotropic}
    \end{subfigure}
    \hfill
    \begin{subfigure}[t]{0.485\textwidth}
        \centering
        \includegraphics[width=\linewidth]{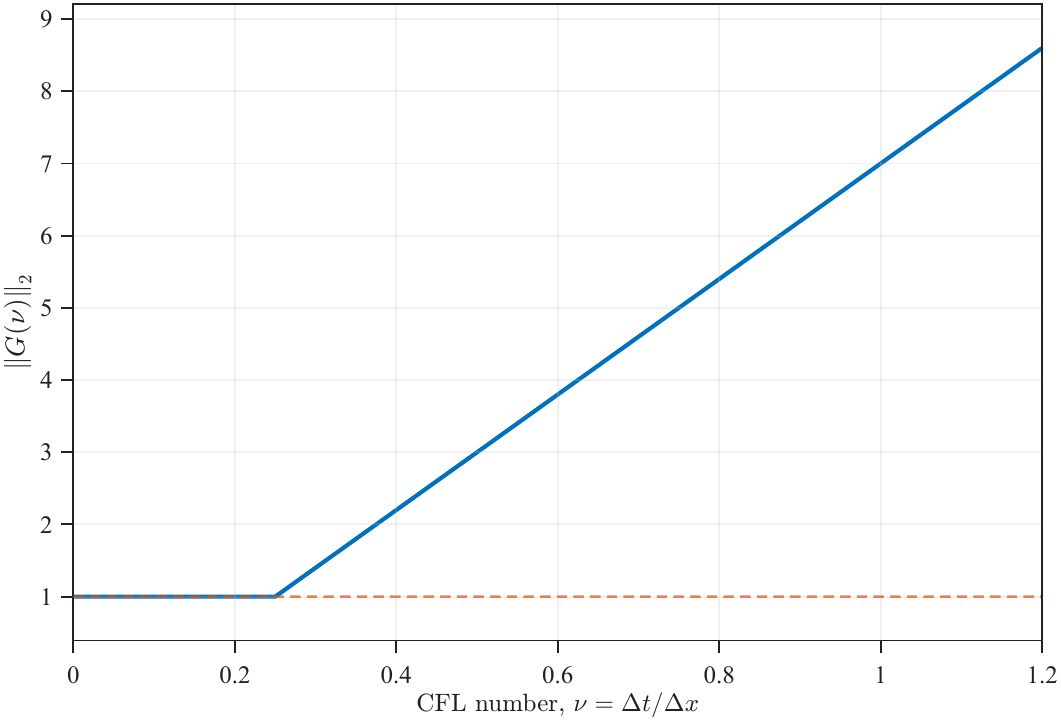}
        \caption{Anisotropic velocity $\boldsymbol c=(1,2,3,4)$.}
        \label{fig:fe-norm-scan-d4-anisotropic}
    \end{subfigure}
    \caption{Amplification-norm scans in four dimensions. The isotropic
    curve has its breakpoint at $\nu=1$, whereas the coefficient $c_4=4$
    moves the anisotropic breakpoint to $\nu=1/4$. Beyond these endpoints,
    the two curves follow the linear branch $2m\nu-1$ with $m=1$ and $m=4$,
    respectively.}
    \label{fig:fe-norm-scan-d4}
\end{figure}

To see whether the one-step norm calculation is reflected in repeated time
stepping, we evolve a nonzero state $u_h^0\in\cS_N^{(4)}$ with
\eqref{eq:d-sparse-update} and record
$E^n:=\norm{u_h^n}_{L^2}/\norm{u_h^0}_{L^2}$ at $t_n=n\Delta t$.
For equal speeds, $\nu=1$ puts all four directional Courant numbers at their
endpoint, and the normalized norm decreases throughout the computation.
Raising $\nu$ to $1.001$ leaves the initial transient nearly unchanged, but
an unstable component eventually dominates; see
Figure~\ref{fig:l2-history-d4-isotropic}.

The anisotropic run isolates the role of the fastest direction. At
$\nu=0.25$, the four directional Courant numbers are
$(0.25,0.5,0.75,1)$, so only the $x_4$ direction reaches its endpoint. A
small increase to $\nu=0.251$ changes the limiting value to $c_4\nu=1.004$
and produces the growth displayed in
Figure~\ref{fig:l2-history-d4-anisotropic}.

\begin{figure}[htbp]
    \centering
    \begin{subfigure}[t]{0.485\textwidth}
        \centering
        \includegraphics[width=\linewidth]{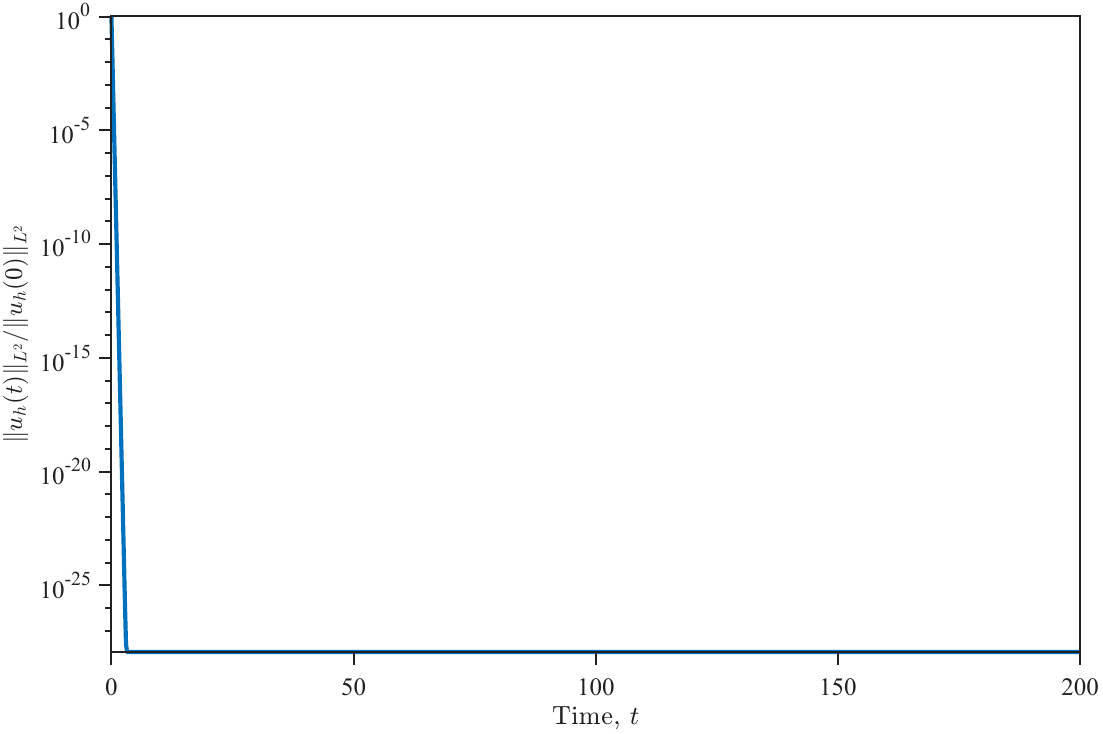}
        \caption{Endpoint run, $\nu=1$.}
        \label{fig:l2-d4-isotropic-cfl1000}
    \end{subfigure}
    \hfill
    \begin{subfigure}[t]{0.485\textwidth}
        \centering
        \includegraphics[width=\linewidth]{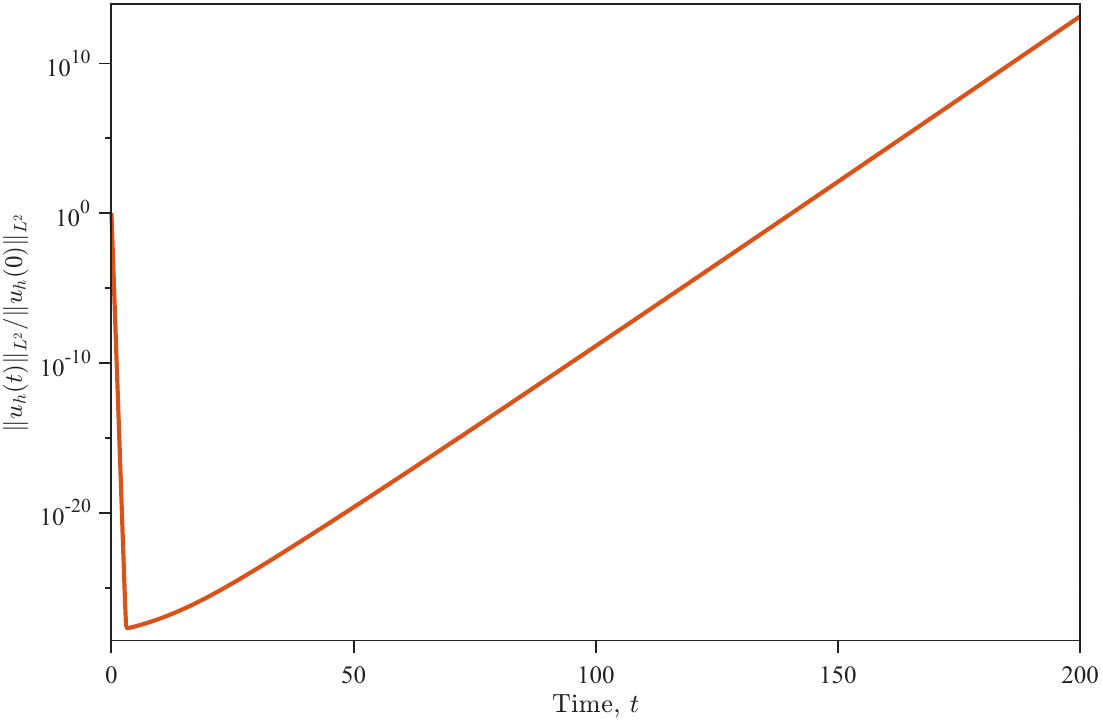}
        \caption{A $0.1\%$ CFL increase, $\nu=1.001$.}
        \label{fig:l2-d4-isotropic-cfl1001}
    \end{subfigure}
    \caption{Long-time response for four equal transport rates. At
    $\nu=1$ (left), all coordinate directions are simultaneously limiting
    and the normalized norm remains bounded. At $\nu=1.001$ (right), the
    supercritical component emerges after a short period of decay and then
    grows exponentially.}
    \label{fig:l2-history-d4-isotropic}
\end{figure}

\begin{figure}[htbp]
    \centering
    \begin{subfigure}[t]{0.485\textwidth}
        \centering
        \includegraphics[width=\linewidth]{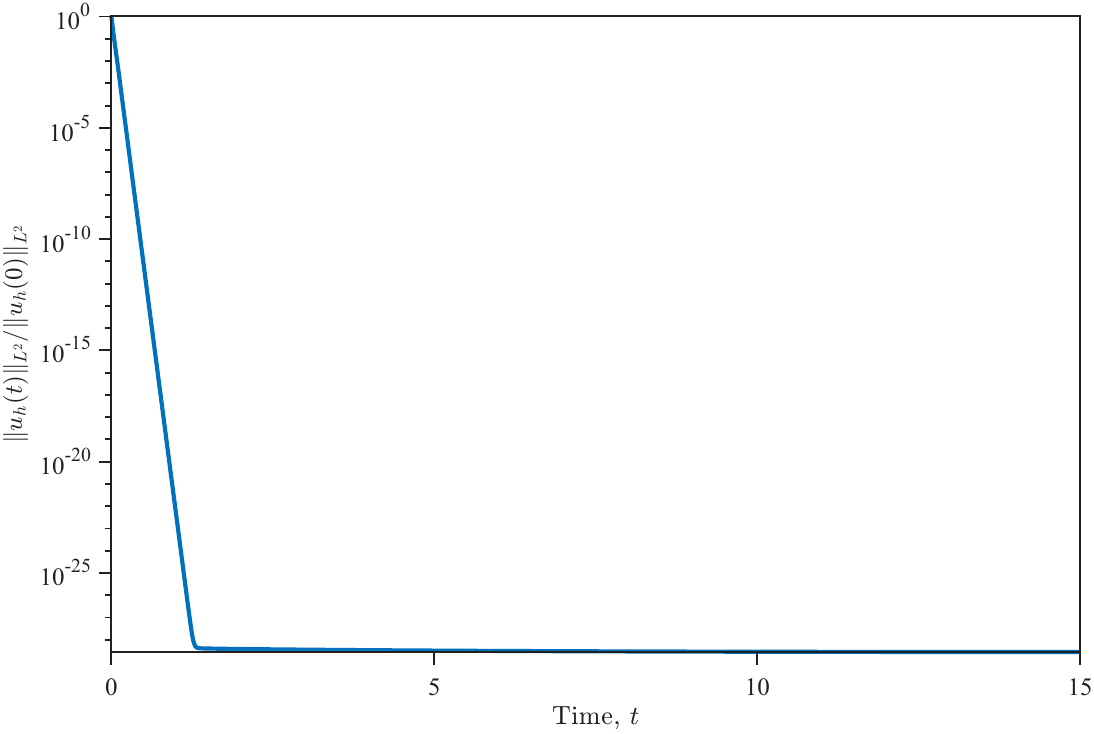}
        \caption{Endpoint fixed by $c_4$, $\nu=0.250$.}
        \label{fig:l2-d4-anisotropic-cfl0250}
    \end{subfigure}
    \hfill
    \begin{subfigure}[t]{0.485\textwidth}
        \centering
        \includegraphics[width=\linewidth]{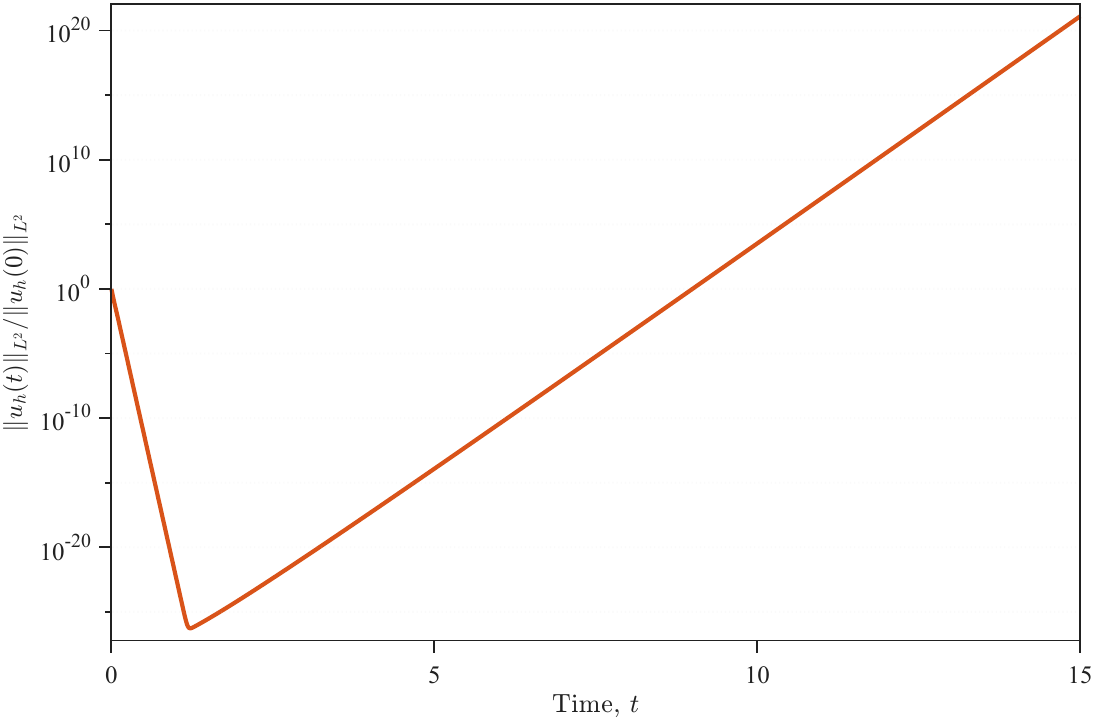}
        \caption{Slightly supercritical run, $\nu=0.251$.}
        \label{fig:l2-d4-anisotropic-cfl0251}
    \end{subfigure}
    \caption{Anisotropic four-dimensional test with
    $(c_1,c_2,c_3,c_4)=(1,2,3,4)$. The endpoint run (left) has
    $c_4\nu=1$ and remains stable. In the right panel, $c_4\nu=1.004$;
    the departure from the unit-norm regime is followed by exponential
    growth.}
    \label{fig:l2-history-d4-anisotropic}
\end{figure}

For comparison, a full grid would restrict $\nu$ to $1/4$ and $1/10$ in
these two tests. Thus the sparse grid gains are $4$ for equal speeds---the largest possible gain in four dimensions---and $5/2$ for the anisotropic
choice. The examples therefore capture both the absence of a dimensional
sum-rate penalty and the sharp loss of stability beyond the max-rate
endpoint.

\subsection{General downward closed index sets}
\label{subsec:numerics-dc}

Finally, we verify Theorems~\ref{thm:dc-sufficient} and
\ref{thm:dc-sharpness} on representative downward closed index sets in
two and three dimensions. For each pair $(\Lambda,\boldsymbol c)$, we
assemble the amplification operator
$G_\Lambda(\Delta t):=I-\sum_\ell(c_\ell\Delta t/h_\ell)
P_{\cS_\Lambda}A_\ell$ on $\cS_\Lambda$ in the orthonormal Haar tensor
basis and evaluate $\norm{G_\Lambda(\Delta t)}_2$ by a singular value
decomposition. Since $\Delta t\mapsto\norm{G_\Lambda(\Delta t)}_2$ is
convex and equals $1$ at $\Delta t=0$, the contractivity threshold
$\Delta t^*:=\sup\{\Delta t:\norm{G_\Lambda(\Delta t)}_2\leq1\}$ is
computed by bisection to relative accuracy $10^{-12}$.
Table~\ref{tab:dc-numerics} compares $\Delta t^*$ with the bound
$\Delta t_\Lambda:=1/\mathcal C_\Lambda(\boldsymbol c)$ of
Theorem~\ref{thm:dc-sufficient}. In all cases where the corner criterion
of Theorem~\ref{thm:dc-sharpness} is satisfied, the computed threshold
agrees with $\Delta t_\Lambda$ to machine precision. For the L-shaped set
of Example~\ref{ex:dc-Lshape}, whose unique maximizer has its corner
outside $\Lambda$, the bound is strictly smaller than the true threshold,
in agreement with the theory. Below, $\Lambda\langle G\rangle$ denotes
the downward closure of the generator set $G$.

\begin{table}[htbp]
    \centering
    \caption{Sufficient CFL bound $\Delta t_\Lambda$ of
    Theorem~\ref{thm:dc-sufficient} versus the numerically computed
    contractivity threshold $\Delta t^*$ for various downward closed
    index sets. The column ``criterion'' indicates whether the corner
    criterion of Theorem~\ref{thm:dc-sharpness} is satisfied.}
    \label{tab:dc-numerics}
    \begin{tabular}{cllcccc}
        \toprule
        $d$ & $\Lambda$ & $\boldsymbol c$ &
        $\Delta t_\Lambda$ & criterion & $\Delta t^*$ &
        $\Delta t^*/\Delta t_\Lambda$ \\
        \midrule
        2 & $\Lambda\langle(4,2)\rangle$ & $(1,1)$
          & $1/20$  & yes & $1/20$  & $1.0000$ \\
        2 & $\Lambda\langle(4,2)\rangle$ & $(2,5)$
          & $1/52$  & yes & $1/52$  & $1.0000$ \\
        2 & $\{2p_1+p_2\leq6\}$ & $(1,1)$
          & $1/64$  & yes & $1/64$  & $1.0000$ \\
        2 & $\{2p_1+p_2\leq6\}$ & $(2,5)$
          & $1/320$ & yes & $1/320$ & $1.0000$ \\
        2 & $\Lambda\langle(3,1),(2,2),(0,4)\rangle$ & $(1,1)$
          & $1/16$  & yes & $1/16$  & $1.0000$ \\
        2 & $\Lambda\langle(3,1),(2,2),(0,4)\rangle$ & $(2,5)$
          & $1/80$  & yes & $1/80$  & $1.0000$ \\
        2 & $\Lambda\langle(4,1),(1,3)\rangle$ (L-shaped) & $(1,1)$
          & $1/24$  & no  & $5.5359\times10^{-2}$ & $1.3286$ \\
        2 & $\Lambda\langle(4,1),(1,3)\rangle$ (L-shaped) & $(2,5)$
          & $1/72$  & no  & $2.1476\times10^{-2}$ & $1.5463$ \\
        3 & $\{|\boldsymbol p|_1\leq3\}$ & $(1,2,3)$
          & $1/24$  & yes & $1/24$  & $1.0000$ \\
        3 & $\{p_1+2p_2+3p_3\leq6\}$ & $(1,2,3)$
          & $1/64$  & yes & $1/64$  & $1.0000$ \\
        \bottomrule
    \end{tabular}
\end{table}

\section{Conclusions}\label{sec:conclusion}

In this paper, we establish the sharp forward Euler CFL condition for the piecewise constant sparse grid DG scheme discretization for transport equations in arbitrary dimensions. The proof combines exact one-dimensional Haar compression and leakage identities with a collective estimate of the mixed directional terms, while axis-aligned modes in the finest level establish sharpness.

We further generalize these results to more general hierarchical spaces built on downward closed index sets. We give an explicit sufficient CFL condition, and identify a geometric criterion under which it is sharp, covering in particular anisotropic full grids and the total-level space. The L-shaped example shows that the condition can be strictly sufficient when the criterion fails, and a complete characterization of the sharp threshold for arbitrary downward closed sets remains open. The analysis relies on the Haar wavelets structure, constant coefficients, and periodicity. Higher-order sparse grid DG methods and explicit RK time integrators are further  directions; their analysis will require new multilevel estimates for the coupling across levels and coordinate directions.

\section*{Acknowledgments}
The author thanks Yingda Cheng for first bringing this problem to his attention at Michigan State University.
This work was partially supported by NSF grant DMS-2618114, ONR grant N00014-24-1-2242, and Ralph E. Powe Junior Faculty Enhancement Award from Oak Ridge Associated Universities (ORAU).

\section*{Use of AI tools}
ChatGPT was used in the preparation of this manuscript for drafting, language editing, consistency checks, and assisting with the proof of Theorem~\ref{thm:dc-sufficient} on the sufficient CFL condition on general downward closed sets. The author is responsible for checking every statement and proof and for the correctness of the final manuscript.

\bibliographystyle{plain}
\bibliography{ref}

\end{document}